\newtheorem{theorem}{Theorem}[section]
\newtheorem{proposition}{Proposition}[section]
\newtheorem{corollary}{Corollary}[section]
\newtheorem{lemma}[theorem]{Lemma}
\newtheorem{definition}[theorem]{Definition}
\newtheorem{example}[theorem]{Example}
\newtheorem{remark}[theorem]{Remark}
\def\ps@pprintTitle{%
   \let\@oddhead\@empty
   \let\@evenhead\@empty
   \let\@oddfoot\@empty
   \let\@evenfoot\@oddfoot
}
\newenvironment{proof}[1][\noindent \textbf{Proof: }]{#1}{ \hfill $\square$ \vspace{2mm}}
\begin{document}

\begin{frontmatter}

\title{Globally hypoelliptic triangularizable systems of periodic pseudo-differential operators}

\author{Fernando de \'{A}vila Silva}
\ead{fernando.avila@ufpr.br}

\address{Departamento de Matem\'{a}tica, Universidade Federal do Paran\'{a}, \\ Caixa Postal 19081, Curitiba, PR 81531-990, Brazil}

\begin{abstract}
This article presents  an investigation on the global hypoellipticity problem for systems  belonging to the class $P = D_t + Q(t,D_x)$, where $Q(t,D_x)$ is a $m\times m$ matrix with entries $c_{j,k}(t)Q_{j,k}(D_x)$. The coefficients  $c_{j,k}(t)$ are smooth, complex-valued functions on the torus $\mathbb{T} \simeq 	\mathbb{R}/2\pi\mathbb{Z}$  and $Q_{j,k}(D_x)$ are pseudo-differential operators on $	\mathbb{T}^n$. The approach consists in establishing conditions on the matrix symbol $Q(t,\xi)$ such that it can be transformed into a suitable triangular form $\Lambda(t,\xi) + \mathcal{N}(t,\xi)$, where $\Lambda(t,\xi)$ is the diagonal matrix  $diag(\lambda_{1}(t,\xi) \ldots \lambda_{m}(t,\xi))$ and $\mathcal{N}(t,\xi)$ is a nilpotent upper triangular matrix. Hence,  the global hypoellipticity of $P$ is studied  by analyzing  the behavior of the eigenvalues $\lambda_{j}(t,\xi)$ and its averages $\lambda_{0,j}(\xi)$, as $|\xi| \to \infty$. 
\end{abstract}

\begin{keyword}
Global hypoellipticity, Systems, Pseudo-differential operators,  Fourier series, Triangularization
\MSC[2010] 35B10 35B65 35H10 35S05
\end{keyword}

\end{frontmatter}


\tableofcontents

\section{Introduction}

This article discusses the global hypoellipticity problem for systems belonging to the class
\begin{equation}\label{syst-intro}
	P = D_t + Q(t,D_x), \ t\in 	\mathbb{T}, x \in 	\mathbb{T}^n,
\end{equation}
where $D_t = i^{-1} \partial_t$, $\mathbb{T} \simeq \mathbb{R}/2\pi\mathbb{Z}$ stands as the torus,  $Q(t,D_x)$  is a $m\times m$ matrix operator with entries $c_{j,k}(t)Q_{j,k}(D_x)$, where $c_{j,k}(t)$ are smooth complex-valued functions on $\mathbb{T}$ and $Q_{j,k}(D_x)$ are  pseudo-differential operators on $\mathbb{T}^n$.

Let us recall that system $P$  is \textit{globally hypoelliptic} if 
$m$-dimensional vectors $u$ and $f$, with coordinates in $\mathcal{D}'(\mathbb{T}^{n+1})$ and $C^{\infty}(\mathbb{T}^{n+1})$ respectively, satisfy equation $Pu=f$ then the coordinates of $u$ are also smooth functions  on $\mathbb{T}^{n+1}$.

The study of global properties for linear operators    is a challenging problem even for the case of vector fields and differential operators on the torus, see the impressive list  \cite{BCP04,BMZ,BK,BKWS,BKNZ15,BPZaZug17,HIMONASGER,HOU79,Petr11} and the references therein. Investigations on  pseudo-differential  classes are also considered in the literature, for instance,  \cite{C-CHINI,AvilaMedeira,AGKM,DGY02,BCCJ16}. These problems are also analyzed in the setting of compact manifolds and Lie groups, e.g. \cite{Araujo2019,BCM,BPZZ,HZ,KMR}. In particular, we point out that an important tool, present in all these references and here, is a  Fourier analysis characterizing the functional spaces under investigation.

On the other hand, although the analyses of regularity of solutions  (as well as solvability and well-posedness of systems of differential and pseudo-differential operators, defined on $\mathbb{R}^n$)  compose a widely explored problem in the literature, there are apparently  no published works dealing with the global hypoellipticity problem for $m$-dimensional \textit{periodic} systems of pseudo-differential operators of type \eqref{syst-intro}.

Hence, this article presents an approach for investigations of the global hypoellipticity of unexplored classes of systems  on the torus. For an outline of the main results and techniques, let us  consider $P$, as in \eqref{syst-intro}, and  equation $Pu=f$. By using the partial Fourier series, with respect to $x$, we can conclude that the global hypoellipticity problem is equivalent to  analyzing the solutions of systems 
\begin{equation}\label{general-syst-intro}
	D_t \widehat{u}(t, \xi)  + Q(t,\xi) \widehat{u}(t, \xi) =\widehat{f}(t, \xi), \ t \in\mathbb{T}, \ \xi \in \mathbb{Z}^n,
\end{equation}
where $Q(t,\xi) = [c_{j,k}(t)Q_{j,k}(\xi)]$ is the matrix symbol of $Q(t,D_x)$. More precisely,  it is necessary, and sufficient, to prove that all derivatives of the coordinates of $\widehat{u}(t, \xi)$ converge to zero faster than any polynomial (see Proposition \ref{prop-smooth}).

The main problems in this procedure involve the computations of solutions for \eqref{general-syst-intro}. To handle  this obstacle, inspiration was taken from  T. V. Gramchev and M. Ruzhansky (see \cite{Gramchev2013}) and   C. Garetto, C. J\"{a}h and M. Ruzhansky (see \cite{Garetto2018}). These authors present classes of  systems \eqref{general-syst-intro} (defined on $\mathbb{R}^n$) that can be reduced to a triangular form. Hence, following this inspiration, the present article considers classes of systems such that $Q(t,\xi) = [c_{j,k}(t)Q_{j,k}(\xi)]$ has a \textit{smooth triangularization}
$$
S^{-1}(t, \xi) \, Q(t,\xi) \,  S(t, \xi) =  \Lambda(t,\xi) + \mathcal{N}(t,\xi),
$$
with $\Lambda(t,\xi) = diag(\lambda_{1}(t,\xi), \ldots, \lambda_{m}(t,\xi))$, where  $\mathcal{N}(t,\xi) = [r_{j,k}(t,\xi)]$ is a nilpotent upper triangular  matrix. Moreover, this triangularization 
allows us to replace the study of system \eqref{general-syst-intro} by the triangular form
\begin{equation}\label{triangular-system-intro}
	D_t v(t, \xi)  + [\Lambda(t,\xi) + \mathcal{N}(t, \xi)] v(t, \xi) = g(t, \xi), \ t \in \mathbb{T}, \ \xi \in \mathbb{Z}^n,
\end{equation}
where
$$
v(t, \xi) = S^{-1}(t,\xi) \widehat{u}(t,\xi) \ \textrm{ and } \ g(t, \xi) = S^{-1}(t,\xi) \widehat{f}(t,\xi).
$$

However,  to guarantee  equivalence between the behaviors of solutions of \eqref{general-syst-intro} and \eqref{triangular-system-intro} when $|\xi| \rightarrow \infty$, it is necessary that  $S$, and its inverse $S^{-1}$, have a polynomial growth of type
\begin{equation}\label{smooth-S}
	\sup_{t\in \mathbb{T}}\|\partial_t^{\alpha} S(t, \xi)\|_{\mathbb{C}^{m\times m}} \leq C|\xi|^{\gamma}, \ \textrm{ and } \ 
	\sup_{t\in \mathbb{T}}\|\partial_t^{\alpha} S^{-1}(t, \xi)\|_{\mathbb{C}^{m\times m}} \leq C|\xi|^{\gamma}.
\end{equation}
Furthermore, we assume that 
\begin{equation}\label{reg-B}
	\sup_{t\in \mathbb{T}}\|\partial_t^{\alpha}[ 
	S^{-1}(t, \xi)\cdot D_tS(t, \xi)]\|_{\mathbb{C}^{m\times m}} \leq C|\xi|^{-N},
\end{equation}
for all $N>0$.

It is important to point out that a triangularization satisfying \eqref{smooth-S} and \eqref{reg-B} can be easily obtained  for the  constant coefficients case  
$Q(t,D_x) = [Q_{j,k}(D_x)]_{m\times m}$, since in this configuration  we get
\begin{equation}\label{general-syst-intro-const}
	D_t \widehat{u}(t,\xi)  + [Q_{j,k}(\xi)] \widehat{u}(t,\xi) =\widehat{f}(t,\xi), \ \xi \in \mathbb{Z}^{n}.
\end{equation}
Thus, we can use the following, and well known, Schur's triangularization process (see \cite{Hof}): 
\begin{lemma}[Schur's triangularization]\label{Schur}
	Let $A$ be an $m\times m$ complex matrix. Then, there exists a unitary matrix $S$ such that $S^*AS$ is upper triangular with diagonal elements $r_{j,j} = \lambda_j$.	
\end{lemma}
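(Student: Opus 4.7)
The plan is a standard induction on the dimension $m$, using the fact that the underlying field is $\mathbb{C}$ so that characteristic polynomials always split.

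For $m=1$ every $1\times 1$ matrix is already upper triangular, so the base case is immediate. For the inductive step, I would assume the statement holds for $(m-1)\times(m-1)$ complex matrices and proceed as follows. By the fundamental theorem of algebra, $A$ has at least one complex eigenvalue $\lambda_1$. Pick a corresponding unit eigenvector $v_1 \in \mathbb{C}^m$, extend $\{v_1\}$ to an orthonormal basis $\{v_1, v_2, \ldots, v_m\}$ of $\mathbb{C}^m$ via Gram--Schmidt, and let $U$ be the unitary matrix whose columns are these vectors. Then
$$
U^* A U = \begin{pmatrix} \lambda_1 & w^* \\ 0 & A' \end{pmatrix},
$$
where $A'$ is an $(m-1)\times(m-1)$ complex matrix and $w \in \mathbb{C}^{m-1}$. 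Apply the inductive hypothesis to obtain a unitary $(m-1)\times(m-1)$ matrix $V'$ with $(V')^* A' V'$ upper triangular, and set
$$
V = \begin{pmatrix} 1 & 0 \\ 0 & V' \end{pmatrix}, \qquad S = UV.
$$
Then $S$ is unitary as a product of two unitaries, and a direct block computation shows that $S^* A S$ is upper triangular. Since $S^* A S$ and $A$ are similar, they share the same spectrum, which for an upper triangular matrix is read off the diagonal; thus $r_{j,j} = \lambda_j$, giving the claimed form.

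There is no real obstacle here: the only nontrivial input is the existence of an eigenvalue, guaranteed by the algebraic closedness of $\mathbb{C}$, and the preservation of unitarity under the block embedding of $V'$ into $V$. Everything else is bookkeeping with orthonormal bases.
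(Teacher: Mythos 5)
Your argument is correct: it is the standard induction on the dimension, deflating by one eigenvector at a time via a unitary change of basis, and it establishes exactly what the lemma claims. The paper itself states this lemma without proof, citing the classical reference \cite{Hof}, and your induction is essentially the textbook argument found there, so there is nothing to add beyond noting that the eigenvalues can in fact be made to appear in any prescribed order on the diagonal by choosing the eigenvalue used at each deflation step.
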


Given this result we have
$$
S^{-1}(\xi) Q(\xi) S(\xi) = \Lambda(\xi) + \mathcal{N}(\xi), \ \forall \xi \in \mathbb{Z}^n.
$$
Moreover, since  $S(\xi)$ and $S^{-1}(\xi)$ are unitary, for every $\xi$, the estimates \eqref{smooth-S} are automatically satisfied. Since $D_tS(\xi) \equiv 0$, we may replace \eqref{general-syst-intro-const} by its equivalent triangular form
\begin{equation*}
	D_t v(t,\xi)  + (\Lambda(\xi) + \mathcal{N}(\xi)) v(t,\xi) =g(t,\xi), \ \xi \in \mathbb{Z}^{n}.
\end{equation*}

Section \ref{sec-1} presents a complete analysis for the constant coefficient case and starts off with an investigation for systems of type $L = [L_{j,k}(D_y)]$,  where $L_{j,k}(D_y)$ is a pseudo-differential operator  on $\mathbb{T}^N$ . Theorem  \ref{LGH} establishes necessary and sufficient conditions for the global hypoellipticity in view of the eigenvalues of the matrix-symbols $L(\eta)$. These conditions are extended  to the system $P=D_t + Q(D_x)$ by Theorem \ref{Th-general-const}. Examples and applications are presented in Subsections \ref{exe-higher-order} and \ref{exe-sum-commuting}.

Furthermore, Subsection \ref{perturbations} discusses the problem of perturbations, that is,  systems of type
$$
L_\epsilon = L + \epsilon Q, \ \epsilon \in \mathbb{C},
$$
with  $Q = [Q_{j,k}(D_y)]$  and $\epsilon$ belonging to some small ball at the origin. The main result that follows  is Theorem \ref{prop-perturbation}, where the essential hypothesis is that  the eigenvalues  of $L_\epsilon(\eta)$ have an analytic expansion on  $\epsilon$ (see \eqref{eigen-expansion}).

Now, concerning the general system  $P=D_t + Q(t,D_x)$ in \eqref{general-syst-intro},  Section 3 introduces the concept of \textit{strongly triangularizable symbols}, according to Definition \ref{def-stro-tri}.  Broadly speaking, this is a class for which  there exists a smooth triangular  form \eqref{triangular-system-intro} satisfying  conditions \eqref{smooth-S} and \eqref{reg-B} (see Theorem \ref{rediction-theorem}).
The main result that follows  is  Theorem \ref{The-Nece-suff-GH}, where it is shown that, under suitable conditions, $P$ is globally hypoelliptic \textit{if and only if} each constant coefficient operator
$$
\mathscr{L}_{0,k} = D_t + \lambda_{0,k}(D_x),   \ k=1, \ldots, m
$$
is globally hypoelliptic on $\mathbb{T}^{n+1}$, where $\lambda_{0,k}(D_x)$ is formally defined by
$$
\lambda_{0,k}(D_x) w(x) =  \sum_{\xi \in \mathbb{Z}^n}{e^{i x \cdot \eta} \lambda_{0,k}(\xi) \widehat{w}(\xi)}, 
$$
and
$\lambda_{0,k}(\xi) = (2 \pi)^{-1}\int_{0}^{2\pi}\lambda_{k}(t, \xi)dt,$ for $\xi \in \mathbb{Z}^n$.

It is important to emphasize that the hypotheses on Theorem \ref{The-Nece-suff-GH} are independent of the order of the operators $Q_{j,k} = Q_{j,k}(D_x)$. In particular, this means that the results can be applied in the case where $Q_{j,k}$ are differential operators. This is a surprising contrast with the scalar case $m=1$, where, in general, it is not possible to apply the equivalence, suggested by Theorem \ref{The-Nece-suff-GH},
for  classes of differential operators.

Regarding  the introduction  of  strongly triangularizable symbols, it is important to emphasize that the main
inspiration comes from  the  approaches  presented  in \cite{Garetto2018,Gramchev2013}. These works handle with the triangularization problem for  matrices of pseudo-differential operators  on $\mathbb{R}^n$ and, in particular, they have shown how to compute the matrices $S$ and $S^{-1}$. Hence, it is proven, in Section \ref{sec-4} (in Theorems \ref{schur-smooth-trian} and \ref{smooth-t-sym})   that the symbol $Q(t, \xi)$ can be strongly triangularized,  under suitable conditions on its eigenvectors.

\subsection{Notations and preliminary results}\label{sec1.1}

Throughout this paper the variables on $\mathbb{T}$, $\mathbb{T}^n$ and $\mathbb{T}^N$ are denoted by $t, x$ and $y$, respectively, with corresponding dual variables  $\tau \in \mathbb{Z}$, $\xi \in \mathbb{Z}^n$ and $\eta \in \mathbb{Z}^N$. 

As usual,  $\mathcal{D}'(\mathbb{T}^N)$ and $C^{\infty}(\mathbb{T}^N)$ stand as the spaces of distributions and smooth functions on $\mathbb{T}^N$, respectively. Also,  $\mathcal{D}'_{m}(\mathbb{T}^N)$ and ${C}^{\infty}_{m}(\mathbb{T}^N)$  denote the spaces of $m$-dimensional vectors with coordinates in  $\mathcal{D}'(\mathbb{T}^N)$ and $C^{\infty}(\mathbb{T}^N)$, respectively.

The analyses will use  the characterizations of $\mathcal{D}'(\mathbb{T}^N)$ and $C^{\infty}(\mathbb{T}^N)$ by means of the  Fourier coefficients.  For this, let us  recall that if $u \in \mathcal{D}'(\mathbb{T}^{N})$, then  its  Fourier coefficients are defined by
\begin{equation*}
	\widehat{u}(\eta) = (2 \pi)^{-N} <u ,  e^{-i y \eta}> \, ,  \
	\eta \in \mathbb{Z}^N.
\end{equation*}

For  $N= n+1$, the partial Fourier coefficients, with respect to $x \in \mathbb{T}^n$, are given by
\begin{equation*}
	\widehat{u}(t, \xi) = (2 \pi)^{-n} <u(t, \cdot),  e^{-i x\xi}>,  \
	\xi \in \mathbb{Z}^n, \ t \in \mathbb{T}.
\end{equation*}

\begin{proposition}\label{prop-smooth}
	Let $\{a_{\eta}\}_{\eta \in \mathbb{Z}^N}$ be a sequence of complex numbers,   $\{c_{\xi}(t)\}_{\xi \in \mathbb{Z}^n}$ be a sequence of smooth functions on $\mathbb{T}$ and  the formal series
	\begin{equation*}
		v(y) =  \sum_{\eta \in \mathbb{Z}^{N}}{a_{\eta} e^{i  y \cdot \eta}}
		\ \textrm{ and } \
		u(t,x) = \sum_{\xi \in \mathbb{Z}^{n}}{c_\xi(t) e^{i  x \cdot \xi}}.
	\end{equation*}
	
	Thus, we have the following:
	
	\begin{enumerate}
		\item [(a)] the series $v$ converges  in $\mathcal{D}'(\mathbb{T}^{N})$ if and only if there  exists positive constants $M$, $C$ and $R$ such that
		\begin{equation}\label{smooth-coef-full}
			|a_{\eta}| \leq C |\xi|^{M}, \ |\xi|\geq R.
		\end{equation}
		Moreover, $v(y) \in C^{\infty}(\mathbb{T}^{N})$ if and only if estimate \eqref{smooth-coef-full} is fulfilled for every $M<0$. In both cases we have
		$\widehat{v}(\eta) = a_{\eta}$.

		\item [(b)] the series $u$ 
		converges in $\mathcal{D}'(\mathbb{T}^{n+1})$ if and only if, for any $\alpha \in \mathbb{Z}_+$, there are positive constants $M$, $C$ and $R$ such that
		\begin{equation}\label{part-smooth-coef}
			\sup_{t \in \mathbb{T}}|\partial^{\alpha}_t c_{\xi}(t)| \leq C |\xi|^{M}, \ |\xi|\geq R.
		\end{equation}
		Moreover, $u \in C^{\infty}(\mathbb{T}^{n+1})$ if and only if estimate \eqref{part-smooth-coef} holds true for every $M<0$. In both cases we have 
		$c_{\xi}(\cdot) = \widehat{u}(\cdot, \xi)$.
		
	\end{enumerate}
\end{proposition}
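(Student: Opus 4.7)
The plan is to establish (a) via the classical duality characterization of $\mathcal{D}'(\mathbb{T}^N)$ and $C^{\infty}(\mathbb{T}^N)$ in terms of Fourier coefficients, and then deduce (b) by applying (a) to the full two-index expansion obtained after expanding each smooth function $c_{\xi}(t)$ in its own Fourier series in the $t$ variable.

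For the sufficiency in (a), I would show that a polynomially bounded sequence $\{a_{\eta}\}$ pairs with any test function $\phi \in C^{\infty}(\mathbb{T}^N)$ via the absolutely convergent series
$$
\langle v, \phi \rangle = (2\pi)^{N}\sum_{\eta} a_{\eta}\, \widehat{\phi}(-\eta),
$$
the convergence being guaranteed by the rapid decay of $\widehat{\phi}(\eta)$ obtained through integration by parts. When the decay of $a_{\eta}$ is faster than any polynomial, term-by-term differentiation of $v$ is justified and yields $v \in C^{\infty}(\mathbb{T}^N)$. For the necessity, I would exploit the standard seminorm estimate $|\langle v, \phi\rangle| \leq C \sum_{|\alpha|\leq k}\sup|\partial^{\alpha} \phi|$, applied to $\phi(y) = e^{-i y\cdot\eta}$, in order to extract the polynomial bound on $a_{\eta}$; if $v\in C^{\infty}$, then each derivative $\partial^{\beta} v$ is continuous with Fourier coefficients $(i\eta)^{\beta} a_{\eta}$, yielding rapid decay for every $\beta$.

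For (b), the strategy is to expand each $c_{\xi}(t)$ in its one-dimensional Fourier series $c_{\xi}(t) = \sum_{\tau} d_{\tau,\xi}\, e^{it\tau}$, so that $u$ becomes the formal series on $\mathbb{T}^{n+1}$
$$
u(t,x) = \sum_{(\tau,\xi)\in \mathbb{Z}^{n+1}} d_{\tau,\xi}\, e^{i(t\tau + x\cdot \xi)}.
$$
Part (a) reduces the problem to the equivalence, for the smooth case,
$$
\sup_{t}|\partial_t^{\alpha} c_{\xi}(t)| \leq C_{\alpha,M}(1+|\xi|)^{M}\ \forall \alpha, M<0 \iff |d_{\tau,\xi}| \leq C_{K,M}(1+|\tau|)^{K}(1+|\xi|)^{M}\ \forall K,M<0,
$$
together with the analogous statement for the distributional case (one fixed exponent with polynomial growth). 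The forward direction is integration by parts in $t$, using $\tau^{\alpha} d_{\tau,\xi} = \widehat{\partial_t^{\alpha} c_{\xi}}(\tau)$; the backward direction is differentiation under the series, with the rapid decay in $\tau$ ensuring uniform convergence in $t$.

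The main obstacle is not a single deep step but rather the bookkeeping of polynomial exponents when translating between the two regularity scales --- especially, in the distributional half of (b), showing that a bound $|d_{\tau,\xi}| \leq C(1+|\tau| + |\xi|)^{M}$ on $\mathbb{T}^{n+1}$ is equivalent to estimates $\sup_{t}|\partial_t^{\alpha} c_{\xi}(t)|\leq C_{\alpha} (1+|\xi|)^{M'}$ at the cost of enlarging $M'$ as $\alpha$ grows. This is handled by splitting $|\tau|^{\alpha}|d_{\tau,\xi}| \leq |\tau|^{\alpha}(1+|\tau|)^{M - K}(1+|\xi|)^{M - \alpha - \text{stuff}}$-type inequalities and summing a convergent geometric-style series in $\tau$ after factoring out a suitable negative power $(1+|\tau|)^{-2}$. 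Once this is done, the two statements in (b) follow directly from the corresponding statements in (a).
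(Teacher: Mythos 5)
The paper states this proposition without proof, treating it as a classical characterization, so your argument has to stand on its own. Part (a) as you outline it is the standard duality argument and is fine; your reduction of the smooth half of (b) to (a) through the double expansion $c_\xi(t)=\sum_{\tau}d_{\tau,\xi}e^{it\tau}$ is also sound, because there the rapid decay in $\tau$ really is available, so differentiating under the sum and absorbing $|\tau|^{\alpha}$ after discarding a factor $(1+|\tau|)^{-2}$ is legitimate. The sufficiency half of the distributional case of (b) also goes through (and in fact only needs the $\alpha=0$ estimate: pair the partial sums with a test function and use the rapid decay, uniform in $t$, of its partial Fourier coefficients).

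The genuine gap is in the distributional ``only if'' half of (b). Your plan is to pass from the single polynomial bound $|d_{\tau,\xi}|\leq C(1+|\tau|+|\xi|)^{M}$ furnished by (a) to estimates $\sup_{t}|\partial_t^{\alpha}c_\xi(t)|\leq C_{\alpha}|\xi|^{M_\alpha}$ for every $\alpha$, by writing $\partial_t^{\alpha}c_\xi(t)=\sum_{\tau}(i\tau)^{\alpha}d_{\tau,\xi}e^{it\tau}$ and ``factoring out a suitable negative power $(1+|\tau|)^{-2}$''. This cannot be repaired by bookkeeping: with only a joint polynomial bound, the remaining factor $|\tau|^{\alpha+2}|d_{\tau,\xi}|$ is not controlled by any power of $|\xi|$ alone, and $\sum_{\tau}|\tau|^{\alpha}|d_{\tau,\xi}|$ may diverge --- the decay in $\tau$ that saved you in the smooth case is exactly what is missing here. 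Worse, the implication you are trying to establish fails at the level of coefficients: take $n=1$ and $c_\xi(t)=(1+|\xi|)^{-2}e^{i2^{|\xi|}t}$. Each $c_\xi$ is smooth, $\sup_t|c_\xi(t)|=(1+|\xi|)^{-2}$, so the series $\sum_{\xi}c_\xi(t)e^{ix\xi}$ converges in $\mathcal{D}'(\mathbb{T}^{2})$ (its full Fourier coefficients are bounded), yet $\sup_t|\partial_t c_\xi(t)|=2^{|\xi|}(1+|\xi|)^{-2}$ violates \eqref{part-smooth-coef} for $\alpha=1$ and every $M$. Mere convergence in $\mathcal{D}'$ only controls the $c_\xi$ as distributions in $t$ (polynomially bounded order and seminorms, via uniform boundedness applied to the partial sums), not the sup-norms of their $t$-derivatives. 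So for this direction you must either weaken the necessity claim to such distributional seminorm bounds --- which, together with the sufficiency for $\mathcal{D}'$ and the two-sided smooth characterization, is all that is actually used later in the paper --- or impose additional hypotheses on the family $\{c_\xi\}$; as written, your argument for this half cannot be completed.
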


By $\Psi^{\nu}(\mathbb{T}^N)$ we denote the class of pseudo-differential operators on $\mathbb{T}^N$, of order $\nu \in \mathbb{R}$ and acting on $C^{\infty}(\mathbb{T}^N)$,  formally defined by 
\begin{equation}\label{pseudo-const}
	a(D_y) u(y) =  \sum_{\eta \in \mathbb{Z}^N}{e^{i y \cdot \eta} a(\eta) \widehat{u}(\eta)},
\end{equation}
with symbol $a(\eta) = \{a(\eta)\}_{\eta \in \mathbb{Z}^N}$ satisfying
\begin{equation}\label{ineq-symbol-const}
	|a(\eta)| \leq C |\eta|^{\nu}, \ \forall \eta \in \mathbb{Z}^{N}.
\end{equation}

For further information  regarding the quantization of pseudo-differential operators on the torus, see  M. Ruzhansky and V. Turunen in  \cite{RT3}.

The next Theorem is an  extension of the results presented in Greenfield's and Wallach's work in \cite{GW1}.

\begin{theorem}\label{GW}
	The operator $a(D_y)$, given by \eqref{pseudo-const}, is globally hypoelliptic on $\mathbb{T}^N$ if and only if there exists positive constants $C$, $M$ and $R$ such that
	\begin{equation*}
		|a(\eta)|  \geq C |\eta|^{-M}, \ |\eta|  \geq R.
	\end{equation*}	
\end{theorem}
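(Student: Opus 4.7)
The plan is to prove the two directions separately, relying in both on Proposition \ref{prop-smooth} to translate between growth/decay of Fourier coefficients and regularity.

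For sufficiency, I would assume the lower bound $|a(\eta)| \geq C|\eta|^{-M}$ for $|\eta| \geq R$, and take $u \in \mathcal{D}'(\mathbb{T}^N)$ with $a(D_y)u = f \in C^\infty(\mathbb{T}^N)$. From the defining formula \eqref{pseudo-const} one has $a(\eta)\widehat{u}(\eta) = \widehat{f}(\eta)$ for every $\eta \in \mathbb{Z}^N$, so in the regime $|\eta| \geq R$ the lower bound gives $|\widehat{u}(\eta)| \leq C^{-1} |\eta|^M |\widehat{f}(\eta)|$. Since $f$ is smooth, Proposition \ref{prop-smooth}(a) shows that $|\widehat{f}(\eta)|$ decays faster than any polynomial, hence so does $|\widehat{u}(\eta)|$ for large $|\eta|$. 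The finitely many remaining Fourier coefficients are harmless, so Proposition \ref{prop-smooth}(a) applied in the other direction yields $u \in C^\infty(\mathbb{T}^N)$.

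For necessity, I would argue by contrapositive. Suppose the lower bound fails for every choice of $C, M, R$; then one can extract a sequence $\{\eta_j\} \subset \mathbb{Z}^N$ with $|\eta_j| \to \infty$ and $|a(\eta_j)| \leq |\eta_j|^{-j}$. After passing to a subsequence, assume the $\eta_j$ are pairwise distinct. Define the formal series
\begin{equation*}
u(y) = \sum_{j \geq 1} e^{i y \cdot \eta_j}.
\end{equation*}
Its coefficients are bounded by $1$, so by Proposition \ref{prop-smooth}(a) it defines an element of $\mathcal{D}'(\mathbb{T}^N)$; on the other hand these coefficients do not decay, so $u \notin C^\infty(\mathbb{T}^N)$. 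Applying $a(D_y)$ formally via \eqref{pseudo-const} yields a series with coefficients $a(\eta_j)$ at the frequencies $\eta_j$ and zero elsewhere, and these coefficients decay faster than any polynomial by construction. Thus $a(D_y)u \in C^\infty(\mathbb{T}^N)$, contradicting global hypoellipticity.

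I expect the argument to be essentially mechanical, with no genuine obstacle beyond verifying the symbol bound \eqref{ineq-symbol-const} actually lets $a(D_y)$ act on $\mathcal{D}'(\mathbb{T}^N)$ so that the identity $\widehat{a(D_y)u}(\eta) = a(\eta)\widehat{u}(\eta)$ is legitimate for distributions (and not just for smooth $u$). The mildly delicate point in the necessity half is ensuring the counterexample $u$ is an honest distribution and that $a(D_y)u$ is computed correctly termwise; both follow from the polynomial symbol bound together with Proposition \ref{prop-smooth}(a). No additional hypotheses on the ellipticity or order of $a(D_y)$ are needed.
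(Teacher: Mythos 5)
Your proof is correct: the division argument for sufficiency and the counterexample supported on a sparse sequence $\{\eta_j\}$ with $|a(\eta_j)|<|\eta_j|^{-j}$ for necessity are exactly the standard Greenfield--Wallach argument, which is also the same mechanism the paper itself uses in the proof of Theorem \ref{LGH}. Note that the paper does not prove Theorem \ref{GW} at all, deferring to \cite{GW1}, so your write-up is in effect supplying the omitted classical proof, and it does so without gaps.
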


In reference \cite{AGKM}, authors R. B. Gonzalez, A.  Kirilov, C. Medeira and F. de {\'A}vila Silva,  
characterize the global hypoellipticity of operators belonging to the class
\begin{equation}\label{P-AGKM}
	P = D_t+c(t)a(D_x),  (t, x) \in \mathbb{T}^{n+1},
\end{equation}
in view of the functions 
\begin{equation*}
	t\in\mathbb{T}\mapsto \lambda(t,\xi)\doteq c(t)a(\xi), \ \xi \in \mathbb{Z}^n,
\end{equation*}
and its averages
\begin{equation*}
	\lambda_{0}(\xi) = (2 \pi)^{-1} \int_{0}^{2 \pi} \lambda(t, \xi) dt, \ \xi \in \mathbb{Z}^n.
\end{equation*}

The following Theorem  summarizes some of the results in \cite{AGKM} that are related with the present  investigations.

\begin{theorem}\label{AGKM} 
	Let $P$ be as in \eqref{P-AGKM}, set $c(t) = p(t) + i q(t)$ and 	$a(\xi) = \alpha(\xi) + i \beta(\xi)$.
	
	\begin{enumerate}
		\item [(a)] if $P$ is globally hypoelliptic, then  the set 
		$$
		Z_{P} = \{\xi \in \mathbb{Z}^n; \, \lambda_{0}(\xi) \in\mathbb{Z} \}
		$$		
		is finite and the operator $\mathscr{P}_{0} = D_t + c_{0}a(D_x)$ is globally hypoelliptic, where $c_{0}$ denotes the average $c_{0} = (2 \pi)^{-1} \int_{0}^{2 \pi} c(t) dt$;

		\item [(b)] if $\mathscr{P}_{0}$ is  globally hypoelliptic  and the functions 
		$$
		\mathbb{T} \ni t \mapsto \Im \lambda(t,\xi) = p(t)\beta(\xi) + q(t)\alpha(\xi) 
		$$
		do not change sign, for sufficiently large $|\xi|$, then $P$ is globally hypoelliptic.
		
		\item [(c)]  the following statements are equivalent:
		\begin{itemize}
			\item[i)] $\mathscr{P}_{0}$  is globally hypoelliptic;
			\item[ii)]  there exist positive constants $C$, $M$ and $R$ such that
			\[|\tau+\lambda_{0}(\xi)|\geq C(|\tau| + |\xi|)^{-M}, \ \textrm{for all} \ |\tau| + |\xi| \geq R;\]
			\item[iii)] there exist positive constants $\widetilde{C}$, $\widetilde{M}$ and $\widetilde{R}$ such that
			\[|1-e^{\pm2\pi i\lambda_{0}(\xi)}|\geq \widetilde{C}|\xi|^{-\widetilde{M}}, \ \textrm{for all} \  |\xi|\geq \widetilde{R}.\]
		\end{itemize}
		
	\end{enumerate} 
	
\end{theorem}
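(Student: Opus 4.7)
The plan is to reduce everything to ODEs in $t$ via partial Fourier series in $x$. Writing $Pu=f$ yields, for each $\xi\in\mathbb{Z}^n$,
\begin{equation*}
D_t \widehat{u}(t,\xi)+\lambda(t,\xi)\widehat{u}(t,\xi)=\widehat{f}(t,\xi),
\end{equation*}
a first-order ODE on $\mathbb{T}$ whose periodic solutions can be written explicitly in terms of the antiderivative $\Lambda(t,\xi)=\int_0^t\lambda(s,\xi)\,ds$ and the factor $1-e^{-2\pi i\lambda_0(\xi)}$. By Proposition \ref{prop-smooth}, $P$ is globally hypoelliptic iff, whenever $\{\widehat{f}(\cdot,\xi)\}$ decays rapidly, so does $\{\widehat{u}(\cdot,\xi)\}$ together with all $t$-derivatives. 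Thus the whole question is converted into obtaining polynomial lower bounds (in $|\xi|$) for the quantities that appear as denominators when inverting the ODE.

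For part (c), the operator $\mathscr{P}_0$ has constant coefficients, so a full Fourier transform in $(t,x)$ turns it into multiplication by the symbol $\tau+\lambda_0(\xi)$. The equivalence (i)$\Leftrightarrow$(ii) is then a direct application of Theorem \ref{GW} to this symbol on $\mathbb{T}^{n+1}$. For (ii)$\Leftrightarrow$(iii) I would use the elementary inequalities $|1-e^{\pm 2\pi i\mu}|\asymp\min(1,\mathrm{dist}(\mu,\mathbb{Z}))$ for $\mu\in\mathbb{R}$ and, in the complex case $\mu=a+ib$, $|1-e^{2\pi i\mu}|\geq c\max(|1-e^{-2\pi b}|,e^{-2\pi b}|\sin\pi a|)$, so that one side controls $\min_\tau|\tau+\lambda_0(\xi)|$ (choose $\tau$ to be the nearest integer to $-\Re\lambda_0(\xi)$) and conversely.

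For part (a), assume $P$ is globally hypoelliptic and suppose for contradiction $Z_P$ is infinite. For $\xi\in Z_P$ the homogeneous ODE has the nontrivial periodic solution $u_\xi(t)=\exp(-i\int_0^t\lambda(s,\xi)\,ds)$, since $\exp(-2\pi i\lambda_0(\xi))=1$. Summing $u(t,x)=\sum_{\xi\in Z_P}a_\xi u_\xi(t)e^{ix\cdot\xi}$ with carefully chosen $a_\xi$ produces a distribution in the kernel of $P$ that fails to be smooth, contradicting hypoellipticity (the main technical point here is to control the growth of $\int_0^t\Im\lambda(s,\xi)\,ds$ so that $u_\xi$ really does live in $\mathcal{D}'(\mathbb{T})$ with rough behaviour; one can subtract the constant $t\,\Im\lambda_0(\xi)$ which vanishes for $\xi\in Z_P$ up to integer real parts). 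For the second assertion, a comparison between the explicit solutions of $P$ and $\mathscr{P}_0$ (using that $\int_0^t(\lambda(s,\xi)-\lambda_0(\xi))\,ds$ is bounded uniformly in $t$ for each fixed $\xi$, with at most polynomial growth in $|\xi|$) lets one transfer the decay estimates from $P$-side to $\mathscr{P}_0$-side, giving the global hypoellipticity of $\mathscr{P}_0$.

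For part (b), the hypothesis that $\Im\lambda(t,\xi)$ does not change sign for $|\xi|$ large is the crucial ingredient: it allows, for each such $\xi$, one of the two standard representations
\begin{equation*}
\widehat{u}(t,\xi)=\frac{i}{1-e^{-2\pi i\lambda_0(\xi)}}\int_0^{2\pi}e^{-i\int_t^{t+s}\lambda(r,\xi)\,dr}\widehat{f}(t+s,\xi)\,ds
\end{equation*}
(or its mirror image obtained by integrating in the opposite direction) to be used with a uniformly bounded exponential factor, since the sign of $\int_t^{t+s}\Im\lambda(r,\xi)\,dr$ is controlled. The expected main obstacle is precisely this step: combining the uniform $L^\infty$ bound on the exponential weight, the polynomial lower bound on $|1-e^{-2\pi i\lambda_0(\xi)}|$ guaranteed by part (c), and the rapid decay of $\widehat{f}$, in order to obtain the required rapid decay of $\widehat{u}$ and all its $t$-derivatives. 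The derivatives are handled by differentiating the ODE itself, writing $D_t^\alpha\widehat{u}$ as a polynomial combination of lower-order derivatives of $\widehat{u}$, $\lambda$ and $\widehat{f}$ and iterating the same estimate.
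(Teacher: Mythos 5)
First, note that the paper does not prove Theorem \ref{AGKM} at all: it is quoted from the reference \cite{AGKM}, and the closest in-paper analogues are Proposition \ref{Z_k-is-finite} and Theorem \ref{tec-theorem}. Measured against those arguments (and the source), your treatment of parts (b) and (c) is essentially the standard and correct route: (i)$\Leftrightarrow$(ii) is indeed Theorem \ref{GW} applied to the symbol $\tau+\lambda_0(\xi)$, (ii)$\Leftrightarrow$(iii) is the usual comparison of $|1-e^{\pm 2\pi i\mu}|$ with the distance of $\mu$ to $\mathbb{Z}$ (using that $\lambda_0(\xi)$ is polynomially bounded), and in (b) the sign condition is exactly what lets you pick, for each large $|\xi|$, the representation of the periodic solution whose exponential weight is bounded by $1$, after which \eqref{lambda_smooth_est} and Proposition \ref{prop-smooth} give rapid decay, with $t$-derivatives handled through the ODE.

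The genuine gap is in the second assertion of part (a). Your proposed ``transfer'' between $P$ and $\mathscr{P}_0$ rests on the boundedness of $\int_0^t(\lambda(s,\xi)-\lambda_0(\xi))\,ds$, but that bound is only polynomial ($\lesssim |\xi|^{\nu}$) \emph{in the exponent}: the conjugating factor $\exp\bigl(\pm i\int_0^t(\lambda(s,\xi)-\lambda_0(\xi))\,ds\bigr)$ has modulus as large as $e^{c|\xi|^{\nu}}$ when $\Im\lambda(\cdot,\xi)$ changes sign, so it maps neither rapidly decaying sequences to rapidly decaying ones nor polynomially bounded ones to polynomially bounded ones. If this transfer were legitimate it would prove the full equivalence ``$P$ GH $\Leftrightarrow$ $\mathscr{P}_0$ GH'' with no hypothesis on $\Im\lambda$, making the sign condition in (b) superfluous --- which it is not. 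The correct argument is by contrapositive and is exactly the construction reproduced in the proof of Theorem \ref{tec-theorem}: if (iii) fails along a sequence $\xi_\ell$, one chooses $t_\ell$ maximizing $t\mapsto\int_0^t\Im\lambda(r,\xi_\ell)\,dr$ (so the weight $\exp\bigl(\int_{t_\ell}^{t}\Im\lambda(r,\xi_\ell)\,dr\bigr)\le 1$), a cutoff $\phi$ supported away from $t_0=\lim t_\ell$, and builds $\widehat f(\cdot,\xi_\ell)$ carrying the rapidly vanishing factor $1-e^{-2\pi i\lambda_0(\xi_\ell)}$, obtaining $f$ smooth and $u\in\mathcal{D}'\setminus C^{\infty}$ with $Pu=f$. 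A similar (smaller) imprecision occurs in your proof that $Z_P$ is finite: subtracting $t\,\Im\lambda_0(\xi)$ is vacuous on $Z_P$ (there $\Im\lambda_0(\xi)=0$) and does not tame the intra-period growth $e^{c|\xi|^{\nu}}$ of $|u_\xi(t)|$; the right normalization is the one in Proposition \ref{Z_k-is-finite}, namely multiplying by $\kappa_\ell=\exp\bigl(-\max_{t}\int_0^{t}\Im\lambda(r,\xi_\ell)\,dr\bigr)$ so that $\sup_t|\widehat u(t,\xi_\ell)|=1$.
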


\section{Systems with constant coefficients} \label{sec-1}

This section discusses  global hypoellipticity for systems with constant coefficients. The starting  point is an analysis of the case
$L = [L_{j,k}(D_y)]_{m \times m}$, where $L_{j,k}(D_y) \in \Psi^{\nu}(\mathbb{T}^N)$ is a pseudo-differential operator with symbol $L_{j,k}(\eta)$ and order $\nu \in \mathbb{R}$. 

Notice that, if  $u \in \mathcal{D}'_{m}(\mathbb{T}^N)$ is  a solution of $Lu= f \in C_{m}^{\infty}(\mathbb{T}^N)$, then we obtain  the   systems 
$$
L(\eta) \widehat{u} (\eta) = \widehat{f} (\eta), \ \forall \eta \in \mathbb{Z}^N,
$$
where $L(\eta) = [L_{j,k}(\eta)]_{m \times m}$, $\eta \in \mathbb{Z}^N$.

\begin{proposition}\label{finite_set}
	If  system $L$ is globally hypoelliptic, then the set
	$$
	Z_{\det L} = \{\eta \in \mathbb{Z}^N; \ \det L(\eta)=0 \}
	$$
	is finite.
\end{proposition}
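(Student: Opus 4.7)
The plan is to prove the contrapositive: assume $Z_{\det L}$ is infinite and manufacture a non-smooth solution to a homogeneous equation, violating global hypoellipticity.

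First, I would use the hypothesis to obtain kernel vectors. For each $\eta \in Z_{\det L}$, since $\det L(\eta) = 0$, the matrix $L(\eta)$ has a nontrivial kernel, so I may pick a unit vector $v_\eta = (v_{\eta,1},\ldots,v_{\eta,m}) \in \mathbb{C}^m$ with $\|v_\eta\| = 1$ and $L(\eta) v_\eta = 0$. Since each $v_\eta$ is a unit vector, at least one component satisfies $|v_{\eta, j}| \geq m^{-1/2}$. By the pigeonhole principle, there is a fixed index $j_0 \in \{1,\ldots,m\}$ and an infinite subset $Z' \subseteq Z_{\det L}$ such that
\[
|v_{\eta, j_0}| \geq m^{-1/2}, \quad \forall \eta \in Z'.
\]

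Next, I would assemble these vectors into a formal Fourier series
\[
u(y) = \sum_{\eta \in Z'} v_\eta \, e^{i y \cdot \eta}.
\]
Because $\|v_\eta\| = 1$ for every $\eta$, each coordinate of $u$ has Fourier coefficients bounded by $1$, hence uniformly bounded. By part (a) of Proposition \ref{prop-smooth} (applied coordinatewise with $M = 0$), each coordinate of $u$ lies in $\mathcal{D}'(\mathbb{T}^N)$, so $u \in \mathcal{D}'_m(\mathbb{T}^N)$. However, the $j_0$-th coordinate has Fourier coefficients bounded below by $m^{-1/2}$ on the infinite set $Z'$, so they do not decay faster than every negative power of $|\eta|$; again by Proposition \ref{prop-smooth}(a), this coordinate fails to be smooth and thus $u \notin C^\infty_m(\mathbb{T}^N)$.

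Finally, applying $L$ termwise and using $L(\eta) v_\eta = 0$ for $\eta \in Z'$ yields $Lu = 0 \in C^\infty_m(\mathbb{T}^N)$, contradicting global hypoellipticity. I do not foresee a serious obstacle here; the only care needed is verifying that (i) termwise application of $L$ is legitimate (it is, since $L$ acts diagonally on the Fourier side via the symbol estimate \eqref{ineq-symbol-const}), and (ii) the selection $\eta \mapsto v_\eta$ requires no measurability, only a pointwise choice on a countable set, which is harmless.
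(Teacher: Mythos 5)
Your proposal is correct and follows essentially the same route as the paper: assume $Z_{\det L}$ is infinite, select unit kernel vectors of $L(\eta)$ over an infinite set of frequencies, and use them as Fourier coefficients to build $u \in \mathcal{D}'_m(\mathbb{T}^N)\setminus C^\infty_m(\mathbb{T}^N)$ with $Lu=0$. The only difference is cosmetic: your pigeonhole step makes explicit which coordinate fails to be smooth, a detail the paper leaves implicit.
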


\begin{proof}
	Suppose that   $Z_{\det L}$ is infinite and choose sequences $\{\eta_{j}\}_{j \in \mathbb{N}} \subset Z_{\det L}$ and $\{a_j\}_{j \in \mathbb{N}} \subset \mathbb{C}^m$ such that $\|a_j\|_{\mathbb{C}^m} = 1$ and $L(\eta_j)a_j =0$. Hence, 
	$$
	\widehat{u}(\eta) = 
	\left\{
	\begin{array}{l}
		a_j, \ \textrm{ if } \ \eta = \eta_j,\\
		0, \ \textrm{ if } \ \eta \neq \eta_j,
	\end{array}
	\right.
	$$
	defines a vector $u \in \mathcal{D}'_{m}(\mathbb{T}^N) \setminus C_{m}^{\infty}(\mathbb{T}^N)$. Once $Lu=0$, then $L$ is not globally hypoelliptic.
	
\end{proof}

Note that if  $Z_{\det L}$ is finite, then there is some positive constant $R$
such that $L(\eta)$ is invertible for all $|\eta|\geq R$. In this case, $\widehat{u}(\eta)$ is uniquely defined by $\widehat{u}(\eta) =  L(\eta)^{-1} \widehat{f}(\eta)$, and  we can write 
$$
L(\eta)^{-1} = \dfrac{\mbox{ad} L(\eta)}{\det L(\eta)}, \ |\eta|\geq R,
$$
where  $\mbox{ad} L(\eta)$ is the cofactor matrix of $L(\eta)$, namely, the transpose of the matrix with entries 
$$
\textbf{a}_{j,k}(\eta) = (-1)^{j+k}  \det L_{j|k}(\eta),
$$
where $L_{j|k}(\eta)$ is the $(m-1) \times (m-1)$ matrix constructed by deleting  the  $j$th row  and the $k$th column of $L(\eta)$.

Since 	the entries  $\textbf{a}_{j,k}(\eta)$ are bounded by some polynomial, it follows from the  equalities 
$$
\widehat{u}_j(\eta) = \dfrac{\sum_{i=1}^{m}\textbf{a}_{j,i}(\eta)f_i(\eta)}{\det L(\eta)}, \ j=1, \ldots, m,
$$
that a sufficient condition for the global hypoellipticity is obtained by considering a controlled growth on the determinant $\det L(\eta)$, as stated by the next Proposition.

\begin{proposition}\label{prop-suf-det}
	If there exist  positive constants  $C, M$ and $R$ such that 
	\begin{equation*}
		|\det L(\eta)|  \geq C  |\eta|^{-M}, \ |\eta|\geq R,
	\end{equation*}
	then system $L$ is globally hypoelliptic.
\end{proposition}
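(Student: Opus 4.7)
The plan is to directly exploit the formula for $\widehat{u}_j(\eta)$ exhibited in the paragraph preceding the proposition and then invoke Proposition~\ref{prop-smooth}(a). First, observe that the hypothesis $|\det L(\eta)|\geq C|\eta|^{-M}$ for $|\eta|\geq R$ forces $L(\eta)$ to be invertible for all $|\eta|\geq R$; in particular $Z_{\det L}\subset\{|\eta|<R\}$ is finite, and every solution $u\in\mathcal{D}'_m(\mathbb{T}^N)$ of $Lu=f$ with $f\in C^\infty_m(\mathbb{T}^N)$ must satisfy
\[
\widehat{u}_j(\eta)=\frac{1}{\det L(\eta)}\sum_{i=1}^{m}\mathbf{a}_{j,i}(\eta)\,\widehat{f}_i(\eta),\qquad |\eta|\geq R,\ j=1,\dots,m.
\]

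The next step is to control the cofactors $\mathbf{a}_{j,i}(\eta)$. Since each $L_{k,l}(D_y)\in\Psi^\nu(\mathbb{T}^N)$, its symbol satisfies $|L_{k,l}(\eta)|\leq C_1|\eta|^{\nu}$. Because $\mathbf{a}_{j,i}(\eta)=\pm\det L_{j|i}(\eta)$ is a signed sum of products of $m-1$ entries of $L(\eta)$, a trivial expansion of the determinant gives $|\mathbf{a}_{j,i}(\eta)|\leq C_2|\eta|^{(m-1)\nu}$ for some constant $C_2$ independent of $\eta$.

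Now I combine these estimates with the smoothness of $f$. By Proposition~\ref{prop-smooth}(a) applied to each coordinate $f_i$, for every $N'>0$ there is a constant $C_{N'}$ with $|\widehat{f}_i(\eta)|\leq C_{N'}|\eta|^{-N'}$ for large $|\eta|$. Feeding these bounds into the formula above yields, for $|\eta|$ large enough,
\[
|\widehat{u}_j(\eta)|\leq \frac{m\,C_2\,C_{N'}}{C}\,|\eta|^{M+(m-1)\nu-N'}.
\]
Since $N'$ is arbitrary, $|\widehat{u}_j(\eta)|$ decays faster than any polynomial, and the same conclusion for the finitely many frequencies $|\eta|<R$ is automatic (those Fourier coefficients, whatever their values, contribute a trigonometric polynomial to $u_j$). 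Applying Proposition~\ref{prop-smooth}(a) once more, each $u_j$ lies in $C^\infty(\mathbb{T}^N)$, proving global hypoellipticity.

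Honestly, no step looks like a real obstacle here: the proof is essentially just a rigorous rewriting of the paragraph that introduces the formula $\widehat{u}_j(\eta)=\sum_i\mathbf{a}_{j,i}(\eta)f_i(\eta)/\det L(\eta)$. The only point needing a small amount of care is the polynomial bound on the cofactors, but this follows by direct expansion using that each matrix entry $L_{k,l}(\eta)$ is controlled by $|\eta|^\nu$ by definition of $\Psi^\nu(\mathbb{T}^N)$.
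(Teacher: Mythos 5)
Your proposal is correct and follows essentially the same route as the paper: invert $L(\eta)$ for $|\eta|\geq R$ via the cofactor (Cramer) formula, bound the cofactors polynomially using the symbol estimates for the entries $L_{k,l}(\eta)\in\Psi^{\nu}(\mathbb{T}^N)$, and combine this with the rapid decay of $\widehat{f}_i(\eta)$ and Proposition \ref{prop-smooth} to conclude that each $\widehat{u}_j(\eta)$ decays rapidly, hence $u\in C^{\infty}_m(\mathbb{T}^N)$. This is precisely the argument the paper sketches in the paragraph preceding the proposition.
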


Our next step is to study the global hypoellipticity of $L$ by analyzing the behavior of the eigenvalues $\lambda_{1}(\eta),  \ldots, \lambda_m(\eta)$ of the symbols $L(\eta)$. For this,  assume, without loss of generality, that 
\begin{equation*}
	|\lambda_1(\eta)| \leq |\lambda_2(\eta)| \leq \ldots \leq |\lambda_m(\eta)|, \ \eta \in \mathbb{Z}^N,
\end{equation*}
and define the sequences $\{\lambda_{j}(\eta)\}_{\eta \in \mathbb{Z}^N}$, for $j =1, \ldots, m$.

\begin{theorem}\label{LGH}
	System $L$ is globally hypoelliptic if and only if there are positive constants $C_j$, $M_j$ and $R_j$ satisfying
	\begin{equation}\label{det-formula-1}
		|\lambda_{j}(\eta)|  \geq C_j  |\eta|^{-M_j}, \ |\eta|\geq R_j,
	\end{equation}
	for $j =1, \ldots, m$.
\end{theorem}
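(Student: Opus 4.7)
The plan is to prove both directions separately, exploiting the ordering $|\lambda_1(\eta)| \leq \dots \leq |\lambda_m(\eta)|$, which means the bound \eqref{det-formula-1} is essentially a condition on the smallest eigenvalue, with the other inequalities following automatically (at worst, with the same constants as for $j=1$).

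For the sufficiency $(\Leftarrow)$, I would reduce to Proposition \ref{prop-suf-det}. Assuming \eqref{det-formula-1}, the ordering gives $|\lambda_j(\eta)| \geq |\lambda_1(\eta)| \geq C_1|\eta|^{-M_1}$ for every $j$, and therefore
\begin{equation*}
|\det L(\eta)| \,=\, \prod_{j=1}^m |\lambda_j(\eta)| \,\geq\, C_1^m |\eta|^{-mM_1}, \quad |\eta|\geq R_1.
\end{equation*}
This polynomial lower bound on the determinant is exactly the hypothesis of Proposition \ref{prop-suf-det}, which delivers the global hypoellipticity of $L$.

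For the necessity $(\Rightarrow)$, I would argue by contradiction, constructing a singular solution when the bound fails. Suppose \eqref{det-formula-1} fails for some index; without loss of generality (again by the ordering) it fails for $j=1$. Then I can extract a sequence $\{\eta_k\} \subset \mathbb{Z}^N$ with $|\eta_k|\to\infty$ and $|\lambda_1(\eta_k)| \leq |\eta_k|^{-k}$. For each $k$, pick a unit eigenvector $v_k \in \mathbb{C}^m$ with $L(\eta_k)v_k = \lambda_1(\eta_k)v_k$, and define
\begin{equation*}
\widehat{u}(\eta) \,=\, \begin{cases} v_k, & \eta=\eta_k,\\ 0, & \text{otherwise},\end{cases} \qquad \widehat{f}(\eta) \,=\, L(\eta)\widehat{u}(\eta).
\end{equation*}
Since $\|v_k\|=1$, Proposition \ref{prop-smooth}(a) (applied coordinatewise) shows that $u \in \mathcal{D}'_m(\mathbb{T}^N)$ and that $u$ is \emph{not} smooth: some coordinate of $v_k$ has modulus at least $1/\sqrt m$ for infinitely many $k$, so its Fourier coefficients fail to decay. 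On the other hand $\widehat{f}(\eta_k) = \lambda_1(\eta_k)v_k$ has norm at most $|\eta_k|^{-k}$, so $f \in C^\infty_m(\mathbb{T}^N)$. Since $Lu = f$, this contradicts the global hypoellipticity of $L$.

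The only technical nuance I anticipate is the eigenvector selection: I do not need any measurable or smooth dependence of $v_k$ on $\eta_k$, just the pointwise existence of a unit eigenvector for each fixed $\eta_k$, which is immediate from $\lambda_1(\eta_k)$ being an eigenvalue of $L(\eta_k)$. Everything else is a direct application of Proposition \ref{prop-smooth} and Proposition \ref{prop-suf-det}, so I do not foresee a serious obstacle.
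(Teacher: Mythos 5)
Your proof is correct and follows essentially the same route as the paper: sufficiency via the identity $\det L(\eta)=\prod_{j=1}^m\lambda_j(\eta)$ together with Proposition \ref{prop-suf-det}, and necessity by building a non-smooth solution $u$ from unit eigenvectors along a sequence where $|\lambda_1(\eta_k)|\leq|\eta_k|^{-k}$, with $f=Lu$ smooth. Your reduction to $j=1$ via the ordering and the pigeonhole remark on the coordinates of $v_k$ are only cosmetic refinements of the paper's argument.
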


\begin{proof}
	The sufficiency is a consequence of the identity $\det L(\eta) =\prod_{j=1}^{m} \lambda_j(\eta)$ and Proposition \ref{prop-suf-det}. Conversely, assume that
	$\{\lambda_{k}(\eta)\}_{\eta \in \mathbb{Z}^N}$ does not satisfy \eqref{det-formula-1}, for some $k \in \{1, \ldots, m\}$. In this case, there exists
	$\{\eta_\ell\}_{\ell \in \mathbb{N}} \subset \mathbb{Z}^N$  satisfying $|\lambda_{k}(\eta_\ell)| <  |\eta_\ell|^{-\ell}$, for all $\ell \in \mathbb{N}$. Let 
	$\{v_{k}(\eta)\}_{\eta \in \mathbb{Z}^N}$ be
	a sequence of unitary eigenvectors associated  to  
	$\lambda_{k}(\eta)$ and set
	$$
	\widehat{u}(\eta) = 
	\left\{
	\begin{array}{l}
		v_k(\eta_\ell), \ \textrm{ if } \ \eta = \eta_\ell,\\
		0, \ \textrm{ if } \ \eta \neq \eta_\ell.
	\end{array}
	\right.
	\ \textrm{ and } \
	\widehat{f}(\eta) = 
	\left\{
	\begin{array}{l}
		\lambda_{k}(\eta_\ell)v_k(\eta_\ell), \ \textrm{ if } \ \eta = \eta_\ell,\\
		0, \ \textrm{ if } \ \eta \neq \eta_\ell.
	\end{array}
	\right.
	$$
	
	These sequences define vectors 
	$u \in \mathcal{D}'_{m}(\mathbb{T}^N) \setminus C_{m}^{\infty}(\mathbb{T}^N)$ and  $f \in  C_{m}^{\infty}(\mathbb{T}^N)$ such that $Lu=f$, hence  $L$ is not globally hypoelliptic.
	
\end{proof}

\begin{remark}
	If the original system is triangular, that is, 
	$$
	L(D_y) =
	\left[
	\begin{array}{ccccc}
		\lambda_1(D_y) & r_{1,2}(D_y)  & \ldots & r_{1,m}(D_y) \\
		0 & \lambda_2(D_y)     & \ldots & r_{2,m}(D_y) \\
		\vdots     & \vdots  & \vdots & \vdots \\
		0 & 0      & \ldots  & \lambda_m(D_y) \\
	\end{array}
	\right],
	$$
	then the operators $ r_{j,k}(D_y)$ play no role on the global hypoellipticity, since  the eigenvalues of $L(\eta)$ do not depend on  $r_{j,k}(\eta)$.
	
\end{remark}

\subsection{Systems on $\mathbb{T} \times \mathbb{T}^n$}

Let us  apply the previous results  for systems in the class   
\begin{equation*}
	P = D_t + Q(D_x), \ (t, x) \in \mathbb{T} \times \mathbb{T}^n,
\end{equation*}
where $Q(D_x)=[Q_{j,k}(D_x)]_{m \times m}$ and  $Q_{j,k}(D_x) \in \Psi^{\nu}(\mathbb{T}^n)$. We can start  by observing  that  equation $Pu=f$ is equivalent to 
$$
P(\tau, \xi)\widehat{u}(\tau, \xi) = \widehat{f}(\tau, \xi), \ (\tau, \xi) \in \mathbb{Z} \times \mathbb{Z}^n,
$$
where  $P(\tau, \xi) = \tau  + Q(\xi)$, with $\tau = I_{m\times m}\tau$.

By applying Schur's Lemma \ref{Schur} to $Q(\xi)$ we obtain a unitary matrix $S(\xi)$ such that 
\begin{equation*}
	S^*(\xi) Q(\xi) S(\xi) =  \Lambda(\xi) + \mathcal{N}(\xi), \ \xi \in \mathbb{Z}^n,
\end{equation*}
where  $\Lambda(\xi)= diag(\kappa_{1}(\xi),  \ldots, \kappa_m(\xi))$   and 
$$
\mathcal{N}(\xi) =
\left[
\begin{array}{ccccc}
	0 & r_{1,2}(\xi) & r_{1,3}(\xi) & \ldots & r_{1,m}(\xi) \\
	0 & 0      & r_{2,3}(\xi) & \ldots & r_{2,m}(\xi) \\
	\vdots     & \vdots & \vdots & \vdots & \vdots \\
	0 & 0      & \ldots & 0 &  r_{m-1,m}(\xi) \\
	0 & 0      & \ldots & \ldots & 0 \\
\end{array}
\right].
$$

Since the matrices  $S(\xi)$ and  $S^{-1}(\xi)$ are unitary, we obtain
$$
\|Q(\xi)\|_{\mathbb{C}^{m \times m}} = \|\Lambda(\xi) + \mathcal{N}(\xi)\|_{\mathbb{C}^{m \times m}}, \ \forall \xi \in \mathbb{Z}^n,
$$
hence, it follows from \eqref{ineq-symbol-const} the existence of  positive constants $C_1$ and  $C_2$ satisfying
$$
|\kappa_{j}(\xi)| \leq C_1 |\xi|^{\nu} \ \textrm{ and } \ 
|r_{j,k}(\xi)| \leq C_2 |\xi|^{\nu}, \ |\xi| \to \infty.
$$ 

In particular, we can define the operators $\kappa_{j}(D_x)$ and $r_{j,k}(D_x)$, given by the symbols $\{\kappa_{j}(\xi)\}_{\xi \in \mathbb{Z}^n}$ and $\{r_{j,k}(\xi)\}_{\xi \in \mathbb{Z}^n}$, respectively. Also, we have the triangular system
$$
\mathscr{P} = D_t + \Lambda(D_x) + \mathcal{N}(D_x), \ (t, x) \in \mathbb{T}^{n+1},
$$
where $\Lambda(D_x) = diag(\kappa_1(D_x), \ldots, \kappa_m(D_x))$ and $\mathcal{N}(D_x)=[r_{j,k}(D_x)]$. The symbols of $\mathscr{P}$ are
$$
\mathscr{P}(\eta)=\mathscr{P}(\tau, \xi) = 
\left[
\begin{array}{ccccc}
	\lambda_1(\tau, \xi) & r_{1,2}(\xi) & r_{1,3}(\xi) & \ldots & r_{1,m}(\xi) \\
	0 & \lambda_2(\tau, \xi)& r_{2,3}(\xi) & \ldots & r_{2,m}(\xi) \\
	\vdots & \vdots & \vdots & \vdots & \vdots \\
	0 & 0 & \ldots & \ldots & \lambda_m(\tau, \xi) \\
\end{array}
\right], \ \eta = (\tau, \xi) \in \mathbb{Z} \times \mathbb{Z}^n,
$$
with eigenvalues
$\lambda_j(\eta) = \lambda_j(\tau, \xi) = \tau + \kappa_{j}(\xi).$

We claim that  $P$ is globally hypoelliptic if and only if $\mathscr{P}$ is globally hypoelliptic. Indeed, assume that  $P$ is globally hypoelliptic, let
$v \in \mathcal{D}'_{m}(\mathbb{T}^{n+1})$ be a solution of 
$\mathscr{P} v = g \in C_{m}^{\infty}(\mathbb{T}^{n+1})$ and consider  the corresponding systems 
$\mathscr{P}(\tau, \xi)\widehat{v}(\tau, \xi) = \widehat{g}(\tau, \xi)$. 

Now, set 
$u(\tau, \xi) = S(\xi) \widehat{v}(\tau, \xi)$, 
$f(\tau, \xi) = S(\xi) \widehat{v}(\tau, \xi)$,
$$
u(t,x) = \sum_{(\tau, \xi) \in \mathbb{Z}^{n+1}}{u(\tau, \xi) e^{i  (t,x) \cdot (\tau, \xi)}}
\ \textrm{ and } \ 
f(t,x) = \sum_{(\tau, \xi) \in \mathbb{Z}^{n+1}}{f(\tau, \xi) e^{i  (t,x) \cdot (\tau, \xi)}}.
$$
Since $S(\xi)$ is unitary, we have $f \in C_{m}^{\infty}(\mathbb{T}^{n+1})$ and $u \in \mathcal{D}'_{m}(\mathbb{T}^{n+1})$. It follows from equations
$$
P(\tau, \xi)u(\tau, \xi) = f(\tau, \xi), \ (\tau, \xi) \in \mathbb{Z} \times \mathbb{Z}^n,
$$
that $Pu = f$. Hence, $u \in C_{m}^{\infty}(\mathbb{T}^{n+1})$. Moreover, by identity  $\widehat{v}(\tau, \xi) = S^*(\xi) u(\tau, \xi)$,  we get 
$v \in C_{m}^{\infty}(\mathbb{T}^{n+1})$. Then, system $\mathscr{P}$ is globally hypoelliptic. The other direction can be proved with a similar argument.

With these discussions we have proved the following:

\begin{theorem}\label{Th-general-const}
	System $P = D_t + Q(D_x)$  is globally hypoelliptic if and only if there are positive constants $C_j$, $M_j$ and $R_j$ such that
	\begin{equation*}
		|\tau + \kappa_{j}(\xi)|  \geq C_j  (|\tau| + |\xi|)^{-M_j}, \ |\tau| + |\xi|  \geq R_j,
	\end{equation*}
	for $j =1, \ldots, m$.
\end{theorem}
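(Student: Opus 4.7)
The plan is to combine two tools that are already in place: the unitary triangularization established just before the statement, which reduces global hypoellipticity of $P$ to that of the triangular system $\mathscr{P}$, and the eigenvalue criterion of Theorem \ref{LGH}, which handles systems with constant (pseudo-differential) coefficients on an arbitrary torus. Thus the proof will essentially be a bookkeeping argument matching the hypotheses of Theorem \ref{LGH} to the situation at hand.

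First I would invoke the equivalence proven in the paragraph preceding the statement: $P$ is globally hypoelliptic on $\mathbb{T}^{n+1}$ if and only if $\mathscr{P} = D_t + \Lambda(D_x) + \mathcal{N}(D_x)$ is globally hypoelliptic on $\mathbb{T}^{n+1}$. So it suffices to characterize the global hypoellipticity of $\mathscr{P}$.

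Next, I would view $\mathscr{P}$ as a system of pseudo-differential operators on $\mathbb{T}^{N}$ with $N = n+1$, with dual variable $\eta = (\tau, \xi) \in \mathbb{Z} \times \mathbb{Z}^{n}$. The matrix symbol $\mathscr{P}(\tau,\xi) = \tau I + \Lambda(\xi) + \mathcal{N}(\xi)$ is upper triangular with diagonal entries $\tau + \kappa_{j}(\xi)$, so its eigenvalues are precisely
$$
\lambda_{j}(\eta) = \lambda_{j}(\tau,\xi) = \tau + \kappa_{j}(\xi), \quad j=1,\dots,m.
$$
The bound $|r_{j,k}(\xi)| \leq C|\xi|^{\nu}$ together with $|\kappa_{j}(\xi)| \leq C|\xi|^{\nu}$ recorded above ensures that each entry of $\mathscr{P}(\tau,\xi)$ satisfies \eqref{ineq-symbol-const} (of order $\max\{1,\nu\}$), so $\mathscr{P}$ falls within the class to which Theorem \ref{LGH} applies.

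Applying Theorem \ref{LGH} to $\mathscr{P}$ now yields: $\mathscr{P}$ is globally hypoelliptic if and only if each sequence $\{\lambda_{j}(\tau,\xi)\}_{(\tau,\xi) \in \mathbb{Z}^{n+1}}$ admits a polynomial lower bound in $|\eta| = |\tau|+|\xi|$, which is exactly the estimate claimed in the statement. Composing with the equivalence from the first step completes the argument. There is no genuine obstacle; the only care needed is to confirm that triangularity of $\mathscr{P}(\tau,\xi)$ makes the $\tau+\kappa_{j}(\xi)$ the full list of eigenvalues (so that the ordering convention preceding Theorem \ref{LGH} is harmless) and that $\mathscr{P}$ lies in the pseudo-differential class covered by that theorem.
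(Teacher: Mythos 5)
Your proposal is correct and follows essentially the same route as the paper: reduce $P$ to the triangular system $\mathscr{P}=D_t+\Lambda(D_x)+\mathcal{N}(D_x)$ via the unitary Schur triangularization (with the resulting equivalence of global hypoellipticity), note that the eigenvalues of the upper triangular symbol $\mathscr{P}(\tau,\xi)$ are exactly $\tau+\kappa_j(\xi)$, and then apply Theorem \ref{LGH} on $\mathbb{T}^{n+1}$ with $\eta=(\tau,\xi)$. The paper's ``proof'' is precisely this discussion preceding the statement, so no further comparison is needed.
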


\begin{corollary}\label{coro-diag-syst-const} The following statements hold true:	
	\begin{enumerate}
		\item [(a)] If  $P$ is globally hypoelliptic, then the  
		set 
		$$
		Z_{\det P} = Z_{\det \mathscr{P}} =  \{(\tau, \xi) \in \mathbb{Z}^{n+1}; \ \det \mathscr{P}(\tau, \xi) = 0 \}
		$$
		is finite.
		
		\item [(b)]  $P$ is globally hypoelliptic if and only if the operators $L_j = D_t + \kappa_j(D_x)$ are globally hypoelliptic.
	\end{enumerate}
\end{corollary}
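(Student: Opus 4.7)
The plan is to reduce both items to the characterization provided by Theorem \ref{Th-general-const}, together with the fact that the determinant is preserved under the unitary conjugation that produced $\mathscr{P}$ from $P$.

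For item (a), the first step is to verify that $Z_{\det P} = Z_{\det \mathscr{P}}$. Since the triangularization was realized via a unitary matrix $S(\xi)$, we have $\mathscr{P}(\tau,\xi) = S^{*}(\xi)P(\tau,\xi)S(\xi)$ for every $(\tau,\xi)$, and $|\det S(\xi)| = 1$ implies $\det P(\tau,\xi) = \det \mathscr{P}(\tau,\xi)$. Because $\mathscr{P}(\tau,\xi)$ is upper triangular with diagonal entries $\lambda_j(\tau,\xi) = \tau + \kappa_j(\xi)$, we obtain the factorization
\begin{equation*}
\det P(\tau,\xi) = \prod_{j=1}^{m}\bigl(\tau + \kappa_j(\xi)\bigr).
\end{equation*}
Assuming $P$ globally hypoelliptic, Theorem \ref{Th-general-const} supplies constants $C_j, M_j, R_j > 0$ such that $|\tau + \kappa_j(\xi)| \geq C_j(|\tau| + |\xi|)^{-M_j}$ whenever $|\tau| + |\xi| \geq R_j$. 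In particular, every factor is strictly positive for $|\tau| + |\xi| \geq \max_j R_j$, so $Z_{\det P}$ is contained in the finite set $\{(\tau,\xi) : |\tau| + |\xi| < \max_j R_j\}$.

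For item (b), I would apply Theorem \ref{GW} (or equivalently Theorem \ref{AGKM}(c), viewing the full symbol on $\mathbb{T}^{n+1}$) to each constant coefficient scalar operator $L_j = D_t + \kappa_j(D_x)$, whose symbol on $\mathbb{Z} \times \mathbb{Z}^n$ is precisely $\tau + \kappa_j(\xi)$. This gives that $L_j$ is globally hypoelliptic on $\mathbb{T}^{n+1}$ if and only if there exist positive constants $C_j, M_j, R_j$ such that $|\tau + \kappa_j(\xi)| \geq C_j(|\tau| + |\xi|)^{-M_j}$ for $|\tau| + |\xi| \geq R_j$. Comparing with Theorem \ref{Th-general-const}, the simultaneous validity of these estimates for all $j \in \{1, \ldots, m\}$ is exactly the necessary and sufficient condition for the global hypoellipticity of $P$. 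Hence $P$ is globally hypoelliptic if and only if each $L_j$ is.

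There is no real obstacle in the argument: both items are essentially direct consequences of Theorem \ref{Th-general-const}. The only point deserving care is the identification $Z_{\det P} = Z_{\det \mathscr{P}}$, which relies on the unitarity of $S(\xi)$; this same fact was already used in deducing Theorem \ref{Th-general-const} from Theorem \ref{LGH}, so the verification is immediate.
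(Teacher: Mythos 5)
Your proof is correct, and item (b) follows the paper's own route exactly: the scalar characterization of each $L_j$ (Theorem \ref{AGKM}(c), or equivalently Theorem \ref{GW} applied to the symbol $\tau+\kappa_j(\xi)$) matched against Theorem \ref{Th-general-const}. For item (a) you take a slightly different, quantitative path: you factor $\det P(\tau,\xi)=\det\mathscr{P}(\tau,\xi)=\prod_{j}(\tau+\kappa_j(\xi))$ via unitarity of $S(\xi)$ and then use the lower bounds of Theorem \ref{Th-general-const} to conclude the zero set is contained in a ball. The paper instead simply invokes Proposition \ref{finite_set}, the general necessary condition (proved by building a nonsmooth null solution) that any globally hypoelliptic constant-coefficient system has finite $Z_{\det}$, applied together with the same unitarity identification $Z_{\det P}=Z_{\det\mathscr{P}}$. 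Your argument buys an explicit bound on where the zeros can live, at the cost of routing through the eigenvalue estimates; the paper's is shorter and does not need the determinant factorization. Both are sound.
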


\begin{proof}
	Part a) is a direct consequence of Proposition \ref{finite_set}, while the proof of b) is given by combining Theorem \ref{AGKM}, part (c),  and Theorem \ref{Th-general-const}.
	
\end{proof}

\subsection{Example: higher order equations}\label{exe-higher-order}

This example  discusses the hypoellipticity of the operator
\begin{equation*}
	P = D_t^m + \sum_{j=1}^{m}{D_t^{m-j}Q_j(D_x)}, \ (t, x) \in \mathbb{T}^{n+1},
\end{equation*}
where  $Q_j(D_x) \in \Psi^{\nu}(\mathbb{T}^n)$. The equation $Pu=f$ is equivalent to 
\begin{equation*}
	D_t^m\widehat{u}(t,\xi) + \sum_{j=1}^{m}{q_j(\xi)}D_t^{m-j}\widehat{u}(t,\xi) = \widehat{f}(t,\xi), \ \xi \in \mathbb{Z}^n,
\end{equation*}
which can be rewritten as the system
\begin{equation}\label{syst-ex1}
	D_t {v}(t,\xi)
	-
	\left[
	\begin{array}{ccccc}
		0 & 1 & 0 & \ldots & 0 \\
		0 & 0 & 1 & \ldots & 0 \\
		\vdots & \vdots & \vdots & \vdots & \vdots \\
		-q_m(\xi) & -q_{m-1}(\xi) & \ldots & \ldots & -q_1(\xi) \\
	\end{array}
	\right] 
	{v}(t,\xi)
	=
	{g}(t,\xi),
\end{equation}
by the standard transformations
$$
v_1(t,\xi) = \widehat{u}(t,\xi), \ v_k(t,\xi) = D_t^{k-1}\widehat{u}(t,\xi), \ k = 2, \ldots, m
$$
and $g_1 = \ldots = g_{m-1} = 0$, $g_m = \widehat{f}$.

Let $Q(D_x)$ be the matrix-operator with symbols given by matrix in \eqref{syst-ex1} and set $\widetilde{P} = D_t + Q(D_x)$. Clearly,  operator $P$ is globally hypoelliptic if and only if the related system $\widetilde{P}$ is also globally hypoelliptic. 

As an illustration, consider the second order operator 
$$
P = D^{2}_t - 2\alpha D_t (-\Delta_x)^{1/2} + \beta^2 \Delta_x, 
$$
where $\alpha, \beta \in \mathbb{R}$ and $\Delta_x = -\sum_{j=1}^{n}\partial^2_{x_j}$, for $x \in \mathbb{T}^n$.

Then,  we have 
\begin{equation*}
	D_t {v}(t,\xi)
	-
	\left[
	\begin{array}{cc}
		0 & 1   \\
		-\beta^2|\xi|^2 & 2\alpha|\xi| 
	\end{array}
	\right] 
	{v}(t,\xi)
	=
	{g}(t,\xi),
\end{equation*}
with eigenvalues 
$$
\lambda_1(\tau, \xi) = \tau +|\xi|\rho_1
\ \textrm{ and } \
\lambda_2(\tau, \xi) = \tau +|\xi|\rho_2,
$$
where $\rho_{1,2} =  -\alpha \pm \sqrt{\alpha^2 - \beta^2}.$

If 	$\alpha^2 - \beta^2 <0$, then $\rho_{1,2}$  are not real  and $P$ is globally hypoelliptic. However, the case of real roots could be much more complicated since, in general, we can not apply the usual approximations by rational numbers, once $\tau / |\xi|$ may be irrational. For instance, when $\alpha=\beta$, it is necessary to analyze
$$
\left|\dfrac{\tau}{|\xi|}  - \alpha  \right|, \ |\xi| \rightarrow \infty.
$$

A complete discussion  for these  type of approximations can be found in \cite{AGKM}.

\subsection{Example: sum of commutative  systems}\label{exe-sum-commuting}

Let  $\{A_j\}_{j=1}^{n}$ be a family of commuting $m\times m$ complex matrices with eigenvalues $\{\kappa_{j,\ell}\}_{\ell=1}^{m}$, for $j=1, \ldots, n$. Consider $Q_j(D_{x_j}) \in \Psi^{\nu_j}(\mathbb{T}_{x_j})$, the system
\begin{equation*}
	P = D_t + \sum_{j=1}^{n}  A_j Q_j(D_{x_j}), \ (t, x_1, \ldots, x_n) \in \mathbb{T} \times \mathbb{T}^n
\end{equation*}
and assume the following:
\begin{lemma}[Simultaneous Schur's triangularization]\label{Simult-Schur}
	Let  $\mathcal{F}$ be a family of commuting  $m\times m$ complex matrices. Then, there exists a unitary matrix $S$ such that $S^*AS$ is triangular, for any $A \in \mathcal{F}$. 	
\end{lemma}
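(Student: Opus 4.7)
The plan is a proof by induction on $m$, modeled on the classical single-matrix Schur triangularization (Lemma \ref{Schur}). The base case $m=1$ is immediate, since every $1\times 1$ matrix is already triangular. The heart of the inductive step is the following auxiliary claim: any commuting family $\mathcal{F}$ of $m\times m$ complex matrices admits a common eigenvector.

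To establish the claim, I would choose a nonzero subspace $W\subseteq \mathbb{C}^m$ that is invariant under every $A\in\mathcal{F}$ and has \emph{minimal positive dimension}; such a $W$ exists because $\mathbb{C}^m$ itself is invariant and dimensions are bounded below by $1$. Suppose for contradiction that $\dim W\geq 2$. Fix any $A\in\mathcal{F}$, pick an eigenvalue $\lambda$ of the restriction $A|_W$, and set $W_\lambda=\ker(A|_W-\lambda I)\neq 0$. For each $B\in\mathcal{F}$, the commutation $AB=BA$ together with $B(W)\subseteq W$ forces $B(W_\lambda)\subseteq W_\lambda$, so $W_\lambda$ is itself $\mathcal{F}$-invariant. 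Minimality of $W$ therefore forces $W_\lambda=W$, i.e.\ $A|_W=\lambda I$. Running this argument for every $A\in\mathcal{F}$ shows that each element of $\mathcal{F}$ acts as a scalar on $W$, so every line in $W$ is $\mathcal{F}$-invariant, again contradicting minimality. Hence $\dim W=1$ and any unit vector $v_1\in W$ is a common unit eigenvector.

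With $v_1$ in hand, I would extend it to an orthonormal basis $\{v_1,e_2,\ldots,e_m\}$ of $\mathbb{C}^m$ and let $U_1$ be the unitary matrix with these columns. For each $A\in\mathcal{F}$, the conjugate $U_1^*AU_1$ has the block form $\begin{pmatrix}\lambda_A & b_A\\ 0 & A'\end{pmatrix}$ with $A'\in M_{m-1}(\mathbb{C})$. A direct block-matrix computation (the first column of each $U_1^*AU_1$ is $\lambda_A e_1$) shows that the assignment $A\mapsto A'$ respects products, so the family $\{A':A\in\mathcal{F}\}$ remains commuting. By the inductive hypothesis there exists a unitary $U_2'\in M_{m-1}(\mathbb{C})$ such that $(U_2')^* A' U_2'$ is upper triangular for every $A\in\mathcal{F}$. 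Taking $U_2=\mathrm{diag}(1,U_2')$ and $S=U_1U_2$ produces a unitary $S$ for which $S^*AS$ is upper triangular for every $A\in\mathcal{F}$.

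The main obstacle is really the common eigenvector lemma; the rest is a mechanical descent identical in spirit to the single-matrix Schur argument. What makes the commuting case succeed where a general family would fail is that $\mathcal{F}$-invariance is preserved under taking eigenspaces of any fixed $A\in\mathcal{F}$, which is precisely what allows the iterative reduction to a one-dimensional common invariant subspace.
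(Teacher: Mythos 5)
Your proof is correct. Note that the paper itself offers no argument for this lemma: it is quoted as a classical fact (simultaneous Schur triangularization of a commuting family, cf.\ standard linear algebra references such as the one cited for Lemma \ref{Schur}), so there is nothing in the text to compare against step by step. What you supply is exactly the standard textbook proof: the minimal-dimension common invariant subspace argument correctly yields a common unit eigenvector $v_1$ (the key points --- that $W_\lambda=\ker(A|_W-\lambda I)$ is invariant under every $B\in\mathcal{F}$ by commutativity, and that scalar action on a $W$ with $\dim W\geq 2$ would contradict minimality --- are both handled), and the block computation $U_1^*(AB)U_1$ having lower-right block $A'B'$ justifies that the compressed family stays commuting, so the induction closes. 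This route also has the merit of working for an arbitrary (possibly infinite) family $\mathcal{F}$, since it never inducts on the number of matrices.
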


Now, let $u$ be a solution of $Pu=f$ and consider the related system 
$P(\tau, \xi) \widehat{u}(\tau,\xi)=\widehat{f}(\tau,\xi)$, where
$$
P(\tau, \xi) =  \tau  + \sum_{j=1}^{n} q_j(\xi_j) A_j, \ (\tau, \xi) \in \mathbb{Z} \times \mathbb{Z}^n, \ \tau =  I_{m\times m}\tau.
$$

It follows from Lemma \ref{Simult-Schur} the existence of a  unitary matrix $S$ such that 
\begin{equation*}
	S^* A_{j} S = diag(\kappa_{j,1}, \ldots, \kappa_{j,m}) + \mathcal{N}_j, \ j =1, \ldots, n, 
\end{equation*}
hence, we obtain the equivalent triangular system $\mathscr{P}(\tau, \xi)v(\tau, \xi)=g(\tau, \xi)$ given by 
$$
\mathscr{P}(\tau, \xi) =  \tau  + \sum_{j=1}^{n} q_j(\xi_j) (\Lambda_j + \mathcal{N}_j), \ (\tau, \xi) \in \mathbb{Z} \times \mathbb{Z}^n, \ \tau =  I_{m\times m}\tau.
$$

Therefore,  $P$  is globally hypoelliptic if and only if there exist positive constants $C_{\ell}$, $M_{\ell}$ and $R_{\ell}$ such that 
\begin{equation}\label{exe-sum}
	\left|\tau  + \sum_{j=1}^{n} q_j(\xi_j) \kappa_{j,\ell}\right|  \geq C_{\ell} |(\tau,\xi)|^{-M_{\ell}}, \ |\tau|+|\xi| \geq R_{\ell},
\end{equation}	
for  $\ell  =1, \ldots, m$.

As an example, consider the differential case $Q_j(D_{x_j}) = D_{x_j}$ and assume that each $A_j$ is symmetric. Thus,  the conditions in \eqref{exe-sum} are connected with the simultaneous approximations of  $\{\kappa_{1,\ell}, \ldots, \kappa_{m,\ell}\}$ by rational numbers, that is, the analysis of the quantities
$$
|\tau  + \kappa_{1,\ell} \xi_1 + \ldots + \kappa_{n,\ell} \xi_n|, \, \tau \in \mathbb{Z}, \,  \xi = (\xi_1, \ldots, \xi_n) \in \mathbb{Z}^n,
$$
for each $\ell =1, \ldots, m$.

\subsection{Perturbations of globally hypoelliptic systems}\label{perturbations}

This subsection discusses the hypoellipticity of the perturbed system
\begin{equation*}
	L_\epsilon = L + \epsilon Q, \ \epsilon \in \mathbb{C},
\end{equation*}
for  $\epsilon$  in some small ball at the origin, where $Q = [Q_{j,k}(D_y)]$ and $L_0 = L = [L_{j,k}(D_y)]$  stands for the unperturbed system. The eigenvalues of $L(\eta)$ and $Q(\eta)$ are denoted by $\lambda_j(\eta)$ and $\kappa_j(\eta)$, respectively.

The main goal is to investigate the effect of $Q$ on the global hypoellipticity of  $L$, which is equivalent to analyze the eigenvalues  $\lambda_{\epsilon,j}(\eta)$ of  $L_{\epsilon}(\eta)$ in terms of $\lambda_j(\eta)$ and $\kappa_j(\eta)$.

If we admit the commutative hypothesis $[L,Q] = 0$, then
we can apply a simultaneous  triangularization to the matrices $L(\eta)$ and $Q(\eta)$. Furthermore,  system 
$L_{\epsilon}(\eta)\widehat{u}(\eta) = \widehat{f}(\eta)$ is equivalent to  the corresponding triangular form
$(\Lambda_{\epsilon} + \mathcal{N}_{\epsilon}) v(\eta) = g(\eta),$
where $\Lambda_{\epsilon}$ is the diagonal matrix with entries 
$$
\lambda_{\epsilon,j}(\eta) = \lambda_{j}(\eta) + \epsilon \kappa_j(\eta), \ j =1, \ldots, m, 
$$
and $\mathcal{N}_{\epsilon} = \mathcal{N}_{L} + \epsilon\mathcal{N}_{Q}$. Hence, for commutative perturbations  we have the following:
\begin{theorem}\label{prop-perturbation-1}
	Admit $[L,Q] = 0$. Then, $L_\epsilon$ is globally hypoelliptic if and only if there are positive constants $C_{j}$, $M_{j}$ and $R_{j}$ such that 
	\begin{equation*}
		\left| \lambda_{j}(\eta) + \epsilon \kappa_j(\eta) \right| \geq C_j |\eta|^{-M_j}, \quad  |\eta| \geq R_j,
	\end{equation*}
	for $j =1, \ldots, m$.
\end{theorem}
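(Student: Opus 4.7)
The plan is to upgrade the pointwise algebraic reduction already sketched in the paragraph preceding the statement into an equivalence of global hypoellipticity, and then to invoke Theorem \ref{LGH} on the resulting triangular operator. Under $[L,Q]=0$, Lemma \ref{Simult-Schur} yields, for each $\eta \in \mathbb{Z}^N$, a unitary $S(\eta)$ simultaneously triangularizing $L(\eta)$ and $Q(\eta)$, so that
$$S^{*}(\eta)L_{\epsilon}(\eta)S(\eta) = \Lambda_{\epsilon}(\eta) + \mathcal{N}_{\epsilon}(\eta),$$
with $\Lambda_{\epsilon}(\eta)$ diagonal having entries $\lambda_{j}(\eta) + \epsilon\kappa_{j}(\eta)$ and $\mathcal{N}_{\epsilon}(\eta)$ strictly upper triangular.

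First I would transfer global hypoellipticity between $L_{\epsilon}$ and the triangular operator with symbol $\Lambda_{\epsilon} + \mathcal{N}_{\epsilon}$. This is exactly the argument used in the previous subsection to establish the equivalence $P \leftrightarrow \mathscr{P}$: starting from a distributional solution of one system I set $\widehat{v}(\eta) = S^{*}(\eta)\widehat{u}(\eta)$ (or vice versa for the opposite direction) and transport the source term in the same way. Because $\|S(\eta)\|_{\mathbb{C}^{m\times m}} = \|S^{*}(\eta)\|_{\mathbb{C}^{m\times m}} = 1$ uniformly in $\eta$, Proposition \ref{prop-smooth} shows that this substitution preserves membership in both $\mathcal{D}'_{m}(\mathbb{T}^N)$ and $C^{\infty}_{m}(\mathbb{T}^N)$. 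Consequently, $L_{\epsilon}$ is globally hypoelliptic if and only if the triangular operator associated to $\Lambda_{\epsilon}(\eta) + \mathcal{N}_{\epsilon}(\eta)$ is.

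Second, I would apply Theorem \ref{LGH} to this triangular operator. Since the eigenvalues of a triangular matrix are its diagonal entries, the $m$ eigenvalues of $\Lambda_{\epsilon}(\eta) + \mathcal{N}_{\epsilon}(\eta)$ are precisely $\lambda_{j}(\eta) + \epsilon\kappa_{j}(\eta)$, for $j=1,\ldots,m$. Theorem \ref{LGH} then immediately delivers the claimed equivalence. The ordering of the eigenvalues by modulus appearing in that theorem is irrelevant here, since the existence of polynomial lower bounds for each index is invariant under permutation of $j$.

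I do not expect a serious technical obstacle in this proof: the commutativity hypothesis is strong enough that the simultaneous Schur decomposition supplies everything required pointwise in $\eta$, and unitarity of $S(\eta)$ rules out any growth issue when passing between $L_{\epsilon}$ and its triangular form. The one mild subtlety worth flagging explicitly in the write-up is that the pairing of $\lambda_{j}(\eta)$ with $\kappa_{j}(\eta)$ in the statement of the theorem is tacitly the one induced by the common eigenvector ordering chosen by $S(\eta)$, so that the two eigenvalues sitting in the $j$-th diagonal slot are added.
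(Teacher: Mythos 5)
Your proposal is correct and follows essentially the same route the paper takes implicitly in the paragraph preceding the statement: simultaneous Schur triangularization of $L(\eta)$ and $Q(\eta)$ via Lemma \ref{Simult-Schur}, a unitary transfer of global hypoellipticity exactly as in the $P\leftrightarrow\mathscr{P}$ argument, and then Theorem \ref{LGH} applied to the triangular symbol whose diagonal entries are $\lambda_j(\eta)+\epsilon\kappa_j(\eta)$. Your remark about the pairing of $\lambda_j(\eta)$ with $\kappa_j(\eta)$ being the one induced by the common triangularization is exactly the convention the paper uses, so nothing is missing.
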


On the other hand,  the general case $[L,Q] \neq 0$ is much more challenging since, in general, there isn't a  unified theory for the study of $\lambda_{\epsilon,j}(\eta)$ in terms of $\lambda_{j}(\eta)$ and $\kappa_j(\eta)$. To handle  this problem, let us take a similar approach to the one taken in  \cite{AK19}, by A. Kirilov and F. de \'Avila Silva,  where the authors consider the theories developed in \cite{kato} by T. Kato and in \cite{Rellich} by F. Rellich.

The key point used there  is the following assumption: the eigenvalues and eigenvectors of $L_\epsilon(\eta)$ have an analytic expansion of type
\begin{equation}\label{eigen-expansion}
	\lambda_{\epsilon,j}(\eta) = \lambda_j(\eta) + \sum_{k=1}^{\infty} \sigma_{j,k}(\eta)  \, \epsilon^k \ \textrm{ and }
	\ v_{\epsilon,j}(\eta)= v_{j}(\eta) + \sum_{k=1}^{\infty}v_{\epsilon,j}(\eta) \, \epsilon^k,
\end{equation}
in a small ball at the origin, where $v_{j}(\eta)$ is a corresponding eigenvector of $\lambda_j(\eta)$. In particular, such analytic  expansion holds when $L_\epsilon(\eta)$ is normal in some ball $|\epsilon|< \epsilon'$, as the reader can see in \cite{Jamison54}.

The next result enables us to shed light  on  the problem of perturbations.
\begin{theorem}\label{prop-perturbation}
	Assume that for  $|\epsilon|< \epsilon_0$ the set
	$$
	Z_{\det L_\epsilon} =\{ \eta \in \mathbb{Z}^n; \det L_\epsilon(\eta) = 0  \}
	$$ 
	is finite, and that the eigenvalues of $L_\epsilon(\eta)$ can be written as in form  \eqref{eigen-expansion}, for all $\eta \in \mathbb{Z}^N$.
	
	Then,  system $L_\epsilon = L + \epsilon Q$ is globally hypoelliptic if and only if there are positive constants $C_j, M_j$ and $R_j$ such that
	\begin{equation*}
		\left|\lambda_j(\eta) + \sum_{k=1}^{\infty} \sigma_{j,k}(\eta) \epsilon^k\right| \geq C_j |\eta|^{-M_j}, \ |\eta| \geq R_j, \ j =1, \ldots, m.
	\end{equation*}
\end{theorem}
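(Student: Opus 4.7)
The plan is to obtain Theorem \ref{prop-perturbation} as a direct specialization of Theorem \ref{LGH} to the perturbed operator $L_\epsilon$. Since $L_\epsilon = L + \epsilon Q$ is itself a matrix of pseudo-differential operators with constant coefficients, it falls inside the class studied in Section \ref{sec-1}, so Theorem \ref{LGH} applies provided the hypotheses of that theorem are verified for $L_\epsilon$. The assumption that $Z_{\det L_\epsilon}$ is finite is precisely what guarantees invertibility of $L_\epsilon(\eta)$ for $|\eta|$ large, which is the standing situation behind Theorem \ref{LGH}; and the usual polynomial symbol bound \eqref{ineq-symbol-const} transfers from $L$ and $Q$ to $L_\epsilon$ for each fixed $\epsilon$ in the disc $|\epsilon|<\epsilon_0$.

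Once the applicability of Theorem \ref{LGH} is established, the global hypoellipticity of $L_\epsilon$ is equivalent to the existence of positive constants $C_j,M_j,R_j$ such that
\begin{equation*}
|\lambda_{\epsilon,j}(\eta)|\geq C_j|\eta|^{-M_j},\qquad |\eta|\geq R_j,\ j=1,\ldots,m,
\end{equation*}
where $\lambda_{\epsilon,1}(\eta),\ldots,\lambda_{\epsilon,m}(\eta)$ are the eigenvalues of $L_\epsilon(\eta)$. The analytic expansion hypothesis \eqref{eigen-expansion} then identifies these eigenvalues with $\lambda_j(\eta)+\sum_{k\geq 1}\sigma_{j,k}(\eta)\epsilon^k$, and substitution yields the stated estimate. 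This is, in essence, the whole argument.

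The only point that needs a short justification is the enumeration of eigenvalues. Theorem \ref{LGH} is stated after reordering the eigenvalues by increasing modulus, whereas the expansion \eqref{eigen-expansion} provides an intrinsic analytic labeling that need not agree with this ordering. I would resolve this by noting that the conclusion of Theorem \ref{LGH} is really equivalent to a polynomial lower bound for \emph{every} eigenvalue of $L_\epsilon(\eta)$, independently of labeling: if the ordered sequence satisfies $|\lambda_{\epsilon,(j)}(\eta)|\geq C_j|\eta|^{-M_j}$, then any eigenvalue occupies some position and inherits the estimate with constants $C=\min_j C_j$, $M=\max_j M_j$; conversely, a lower bound on every eigenvalue implies the ordered one. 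This lets the bound phrased in the $(j)$-labeling of \eqref{eigen-expansion} be transferred to the ordered labeling required by Theorem \ref{LGH}, and back.

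I expect no serious obstacle beyond this bookkeeping, because the heavy lifting, namely the characterization of global hypoellipticity of a constant-coefficient matrix system by polynomial lower bounds on its eigenvalue sequences, has already been carried out in Theorem \ref{LGH}. The analyticity hypothesis \eqref{eigen-expansion} is used only to \emph{name} the perturbed eigenvalues in terms of the unperturbed data $\lambda_j(\eta)$ and the coefficients $\sigma_{j,k}(\eta)$; it does not enter the estimate itself. The subtle content of the theorem is therefore not in its proof but in recognizing that for this class of perturbations one has an explicit closed-form description of $\lambda_{\epsilon,j}(\eta)$, which makes the lower bound amenable to direct verification in concrete examples.
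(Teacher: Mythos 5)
Your proposal is correct and coincides with the paper's (implicit) argument: the paper states Theorem \ref{prop-perturbation} without a separate proof, precisely because it is Theorem \ref{LGH} applied to the constant-coefficient system $L_\epsilon$ for each fixed $|\epsilon|<\epsilon_0$, with the expansion \eqref{eigen-expansion} serving only to express the eigenvalues $\lambda_{\epsilon,j}(\eta)$ in terms of $\lambda_j(\eta)$ and $\sigma_{j,k}(\eta)$. Your remark on reconciling the modulus-ordered labeling of Theorem \ref{LGH} with the analytic labeling of \eqref{eigen-expansion} is a sound piece of bookkeeping that the paper leaves tacit.
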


\begin{corollary}
	Let $P_{\epsilon}$   be the perturbed system
	$$
	P_{\epsilon} = D_t + L_\epsilon,  \ (t,y) \in \mathbb{T} \times \mathbb{T}^N,
	$$
	such that the set $Z_{\det P_{\epsilon}}$ is finite and the eigenvalues of $L_\epsilon(\eta)$ satisfy \eqref{eigen-expansion}, for $|\epsilon|< \epsilon_0$.
	
	Under these conditions, $P_{\epsilon}$ is globally hypoelliptic if and only if there are positive constants $C_j, M_j$ and $R_j$ satisfying
	\begin{equation*}
		\left|\tau + \lambda_j(\xi) + \sum_{k=1}^{\infty} \sigma_{j,k}(\xi) \epsilon^k\right| \geq C_j |(\tau,\xi)|^{-M_j}, \quad  |\tau| + |\xi| \geq R_j,
	\end{equation*}
	for $j =1, \ldots, m$.

\end{corollary}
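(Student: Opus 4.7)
The plan is to apply Theorem \ref{prop-perturbation} directly to $P_\epsilon$, viewing it as a perturbed pseudo-differential system on $\mathbb{T} \times \mathbb{T}^N \simeq \mathbb{T}^{N+1}$. Concretely, I would write $P_\epsilon = \widetilde{L} + \epsilon \widetilde{Q}$, where $\widetilde{L} = D_t + L$ and $\widetilde{Q} = Q$ is extended trivially in the $t$-direction. The matrix symbol at $(\tau, \eta) \in \mathbb{Z} \times \mathbb{Z}^N$ is then
$$
P_\epsilon(\tau, \eta) = \tau I_{m \times m} + L_\epsilon(\eta),
$$
which is exactly the form covered by Theorem \ref{prop-perturbation}, now with the dual variable $\eta$ replaced by $(\tau, \eta)$ (written $(\tau, \xi)$ in the statement).

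The central algebraic observation is that adding $\tau I_{m \times m}$ leaves eigenvectors unchanged and shifts every eigenvalue by $\tau$. Consequently, the eigenvalues of $P_\epsilon(\tau, \eta)$ are exactly $\tau + \lambda_{\epsilon, j}(\eta)$, and substituting the analytic expansion \eqref{eigen-expansion} yields
$$
\tau + \lambda_{\epsilon, j}(\eta) = \tau + \lambda_j(\eta) + \sum_{k=1}^{\infty} \sigma_{j,k}(\eta)\, \epsilon^k,
$$
with eigenvectors $v_{\epsilon, j}(\eta)$ (identical to those of $L_\epsilon(\eta)$) still admitting the required analytic expansion in $\epsilon$. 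Hence the analyticity hypothesis of Theorem \ref{prop-perturbation} transfers to $P_\epsilon$ unchanged, since the $\tau I$-shift is $\epsilon$-independent and perturbs none of the coefficients $\sigma_{j,k}$.

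The remaining hypothesis, finiteness of $Z_{\det P_\epsilon}$, is assumed. Invoking Theorem \ref{prop-perturbation} then gives the claimed equivalence: $P_\epsilon$ is globally hypoelliptic if and only if there exist positive constants $C_j, M_j, R_j$ with
$$
\Bigl|\tau + \lambda_j(\eta) + \sum_{k=1}^{\infty} \sigma_{j,k}(\eta)\, \epsilon^k\Bigr| \geq C_j\, |(\tau, \eta)|^{-M_j}, \quad |\tau| + |\eta| \geq R_j,
$$
for $j = 1, \ldots, m$. There is no substantive obstacle here; the corollary is essentially a matter of repackaging, with the only mild point being the verification that the $\tau I$-translation preserves the analytic eigenstructure, which follows at once from its $\epsilon$-independence.
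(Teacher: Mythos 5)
Your proposal is correct and matches the paper's (implicit) argument: the corollary is obtained precisely by applying Theorem \ref{prop-perturbation} to the symbol $P_\epsilon(\tau,\eta)=\tau I_{m\times m}+L_\epsilon(\eta)$, noting that the $\epsilon$-independent shift by $\tau I$ moves every eigenvalue to $\tau+\lambda_{\epsilon,j}(\eta)$ while preserving the analytic expansion \eqref{eigen-expansion} and the eigenvectors. Nothing further is needed beyond the assumed finiteness of $Z_{\det P_\epsilon}$, exactly as you argue.
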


\begin{remark}
	It is important to point out that any application of  Theorem \ref{prop-perturbation} necessarily involves
	the calculations of the coefficients $\sigma_{j,k}(\eta)$. An approach concerning this process can be found in  Chapter two in \cite{kato} and Section 6 in \cite{AK19}.

\end{remark}

It is possible to illustrate the previous results with an adaptation of Example 6.4 in \cite{AK19}.

\begin{example}
	Consider  the operators 
	$$
	\lambda_j(D)u(t,x) = \sum_{\tau \in \mathbb{Z}, \, \ell \in \mathbb{N}}{e^{i (t,x) \cdot (\tau, \ell)} (-1)^{j}\omega \ell \, \widehat{u}(\tau, \ell)}, \ j=1,2,
	$$
	defined on $\mathbb{T}^2$, for some $\omega \in \mathbb{C}$ fixed, and 
	$$
	q(D_x)v(x) = \sum_{\ell \in \mathbb{N}}{e^{i x \cdot \ell} q(\ell) \, \widehat{v}(\ell)},
	$$
	an operator on $\mathbb{T}_x$, of order $0\leq \nu <1$.
	
	Let $\Lambda (D)$ and $Q (D)$ be the matrix operators	
	\begin{equation*}
		\Lambda (D) = diag (\lambda_1(D), \lambda_2(D))
		\ \textrm{ and } \
		Q (D) = \left [
		\begin{array}{cc}
			0  &  q(D_x)\\[2mm]
			q(D_x)  & 0
		\end{array} \right],
	\end{equation*}
	and set $L_{\epsilon} = \Lambda(D) + \epsilon Q(D)$.
	
	Thus, the eigenvalues of $L_{\epsilon}(\ell)= \Lambda(\ell) + \epsilon Q(\ell)$ are 
	\begin{equation*}
		\lambda_{\epsilon,1}(\ell) = - \omega \ell - \dfrac{(q(\ell))^2}{2 \omega \ell} \, \epsilon^2 + \dfrac{(q(\ell))^4}{(2 \omega \ell)^3} \, \epsilon^4
		+ O(\epsilon^6),
	\end{equation*}
	and
	\begin{equation*}
		\lambda_{\epsilon,2}(\ell) = \omega \ell + \dfrac{(q(\ell))^2}{2 \omega \ell} \, \epsilon^2 - \dfrac{(q(\ell))^4}{(2 \omega \ell)^3} \, \epsilon^4
		+ O(\epsilon^6).
	\end{equation*}
	
	In particular,   there exists  a positive constant  $\epsilon_1$ such that $L_\epsilon$ is globally hypoelliptic for $|\epsilon|<\epsilon_1$, as the reader can see in \cite{AK19}.
	
\end{example}

\section{Systems with variable coefficients}\label{sec-3}

This Section addresses the systems $P = D_t  + Q(t,D_x)$, where $Q(t,D_x)$ is an $m \times m$ matrix operator with symbols $Q(t,\xi) = [c_{j,k}(t)Q_{j,k}(\xi)]$. Let us recall that equation $Pu = f \in C_{m}^{\infty}(\mathbb{T}^{n+1})$, with $u \in \mathcal{D}'_{m}(\mathbb{T}^{n+1})$, is equivalent to   
\begin{equation}\label{general-system}
	D_t \widehat{u}(t, \xi)  + Q(t, \xi) \widehat{u}(t, \xi) =\widehat{f}(t, \xi), \ t \in \mathbb{T}, \ \xi \in \mathbb{Z}^n,
\end{equation}
where $\widehat{u}(t, \xi)$ and $\widehat{f}(t, \xi)$ denotes the $m$-dimensional vectors 
$$
\widehat{u}(t, \xi) = (\widehat{u}_1(t, \xi), \ldots, \widehat{u}_m(t, \xi))
\ \textrm{ and } \
\widehat{f}(t, \xi) = (\widehat{f}_1(t, \xi), \ldots, \widehat{f}_m(t, \xi)).
$$

We analyze  the global  hypoellipticity of $P$ by studying 
the coordinates $\widehat{u}_j(t, \xi)$ in  the sense of Proposition \ref{prop-smooth}. The starting point is the introduction of \textit{strongly triangularizable symbols} which enables the study of  a suitable triangular system associated to \eqref{general-system}. 

Firstly, the analysis is restricted to symbols having smooth eigenvalues and satisfying a polynomial growth.

\begin{definition}
	We say that $Q(t,\xi)$ satisfies condition ($\mathscr{A}$) if, for each $k \in \{1, \ldots, m\}$, the eigenvalues $\lambda_k(t,\xi)$ fulfill the following conditions:

	\begin{enumerate}
		\item [(a)] they belong to the space
		$
		C^{\infty}(\mathbb{T} \times \mathbb{Z}^n) \doteq 
		\{f: \mathbb{T} \times \mathbb{Z}^n \to \mathbb{C}; \  f(\cdot, \xi) \in 	C^{\infty}(\mathbb{T}),  \ \forall \xi \in \mathbb{Z}^n\}.
		$

		\item [(b)] given $\alpha \in \mathbb{Z}_{+}$, there exists $C=C(\alpha, k)$ and 
		$\mu_k=\mu_k(\alpha)$ such that 
		\begin{equation}\label{poly-growth}
			\sup_{t\in \mathbb{T}}|\partial_t ^{\alpha} \lambda_k(t, \xi)| \leq C|\xi|^{\mu_k}, \ |\xi| \rightarrow \infty.
		\end{equation}

	\end{enumerate}
	
\end{definition}

\begin{remark}
	Along the text we use  expression ``a smooth function on $\mathbb{T} \times \mathbb{Z}^n$''  to indicate that such function belongs to  $C^{\infty}(\mathbb{T} \times \mathbb{Z}^n)$. Analogously, we say that a matrix $M(t,\xi)=[m_{j,k}(t,\xi)]_{m \times m}$ is smooth on $\mathbb{T} \times \mathbb{Z}^n$ when  $m_{j,k} \in C^{\infty}(\mathbb{T} \times \mathbb{Z}^n)$.
	
\end{remark}

\begin{remark}
	We point out that the study of differentiability for  eigenvalues (and eigenvectors) is a highly non  trivial problem (even for holomorphic matrices). For more information on this subject, see Sections 5.4 and 5.5 in \cite{kato}. Regarding  condition \eqref{poly-growth}, let us  observe that many classes of examples consist in  systems given by  $Q(t,D_x) = Q(D_x)A(t)$, where $Q(D_x)$ is a pseudo-differential operator on $\mathbb{T}^n$, of order $\nu$, and $A(t)$ is a smooth  $m\times m$ matrix with  smooth eigenvalues $\sigma_{1}(t), \ldots, \sigma_{m}(t)$. In this case, for each $k \in \{1, \ldots, m\}$, we obtain 
	$$
	\sup_{t \in \mathbb{T}} |\partial^{\alpha}_t \lambda_k(t,\xi)| =  |q(\xi)| 
	\max_{t \in \mathbb{T}} |\partial^{\alpha}_t \sigma_{k}(t)|
	\leq  C_{Q,k,\alpha} |\xi|^{\nu}.
	$$
\end{remark}

\begin{definition}\label{def-stro-tri}
	Let $Q(t,\xi)$ be a symbol satisfying condition ($\mathscr{A}$). We say that $Q(t,\xi)$  is 	strongly triangularizable if it  satisfies the following conditions:
	
	\begin{enumerate}
		\item [($\mathscr{B}_1$)] for each $\xi \in \mathbb{Z}^n$, there exists a triangularization 
		\begin{equation*}
			S^{-1}(t, \xi)  Q(t,\xi) S(t, \xi)  = \Lambda(t,\xi) + \mathcal{N}(t, \xi),
		\end{equation*}
		where 
		$
		\Lambda(t,\xi) = diag(\lambda_{1}(t,\xi), \ldots, \lambda_{m}(t,\xi))
		$
		and $\mathcal{N}(t,\xi)$ is a smooth upper triangular matrix 
		$$
		\mathcal{N}(t,\xi) =
		\left[
		\begin{array}{ccccc}
			0 & r_{1,2}(t,\xi) &\ldots & r_{1,m}(t,\xi)\\
			0 & 0      &  \ldots & r_{2,m}(t,\xi) \\
			\vdots     &  \vdots & \vdots & \vdots \\
			0 & 0      &  0 &  r_{m-1,m}(t,\xi) \\
			0 & 0      &  \ldots & 0 \\
		\end{array}
		\right].
		$$

		\item [($\mathscr{B}_2$)] the matrices $S$ and $S^{-1}$ are smooth on $\mathbb{T}\times \mathbb{Z}^n$ and, for any $\alpha \in \mathbb{Z}_+$, there exist positive constants $C$, $R$  and $\gamma$,  such that 
		\begin{equation}\label{poly-decay-S-sec4}
			\sup_{t \in \mathbb{T}}\|\partial_t^{\alpha} S(t, \xi)\|_{\mathbb{C}^{m \times m}} \leq C|\xi|^{\gamma} \ \textrm{ and } \ 
			\sup_{t \in \mathbb{T}}\|\partial_t^{\alpha} S^{-1}(t, \xi)\|_{\mathbb{C}^{m \times m}}  \leq C|\xi|^{\gamma},
		\end{equation}
		for all $|\xi| \geq R$.

		\item [($\mathscr{B}_3$)] given  $\alpha \in \mathbb{Z}_+$ and $N>0$, there are positive constants $C$ and $R$ such that
		\begin{equation*}
			\sup_{t \in \mathbb{T}}	\|\partial_t^{\alpha} B(t, \xi)\|_{\mathbb{C}^{m \times m}}  \leq C|\xi|^{-N},	 \ \forall  |\xi| \geq R,
		\end{equation*}
		where $B(t, \xi)  = S^{-1}(t, \xi)\cdot  D_t S(t, \xi)$.

	\end{enumerate}

	Additionally, $Q(t,\xi)$ is  strongly triangularizable with diagonal \textit{bounded from below} if it satisfies the following extra condition:
	\begin{enumerate}
		\item [($\mathscr{B}_4$)] for each $k \in \{1, \ldots, m\}$, there exists a positive  constant $\theta_k$ such that
		\begin{equation*}
			\Im \lambda_k(t, \xi) \geq -\theta_k, \ t \in \mathbb{T}, \  \forall \xi \in \mathbb{Z}^n.
		\end{equation*}
	\end{enumerate}

	Finally, a system as \eqref{general-system}  with a strongly triangularizable symbol (with diagonal bounded from below)
	is said to be a strongly triangularizable system	(with diagonal bounded from below).
	
\end{definition}

\begin{remark}
	Section \ref{sec-4} presents sufficient conditions for $Q(t,\xi)$ to be strongly triangularizable. It also presents a process for the construction of matrices $S$, $S^{-1}$ and $\mathcal{N}$.
\end{remark}

\begin{remark}
	The requirement of conditions \eqref{poly-growth} and ($\mathscr{B}_4$)  will be clarified by Theorem \ref{tec-theorem}.
\end{remark}

Now, the investigation proceeds by assuming that system \eqref{general-system} is strongly triangularizable. Firstly, it is important to  emphasize the following properties:

\begin{enumerate}
	\item [(i)] condition $(\mathscr{B}_1)$ implies that \eqref{general-system} can be rewritten in the form
	\begin{equation}\label{triangular-system}
		(D_t + \Lambda(t,\xi) + \mathcal{N}(t,\xi) + B(t, \xi)  )v(t,\xi)  = g(t,\xi),
	\end{equation}
	by setting
	$$
	\widehat{u}(t,\xi) = S(t,\xi)v(t, \xi)  \ \textrm{ and } \ 
	\widehat{f}(t,\xi) = S(t,\xi)g(t, \xi);
	$$

	\item[(ii)] it follows from conditions ($\mathscr{B}_1$), ($\mathscr{B}_2$) and  \eqref{poly-growth}  that the  entries $r_{j,k}(t,\xi)$ of $\mathcal{N}$ have all derivatives bounded by some polynomial, namely, for every $\alpha \in \mathbb{Z}_{+}$, there exists $C=C(\alpha, j,k)$ and $\mu_{j,k}=\mu_{j,k}(\alpha)$ such that 
	\begin{equation}\label{poly-growth-nilpotent}
		\sup_{t\in \mathbb{T}}|\partial_t ^{\alpha} r_{j,k}(t, \xi)| \leq C|\xi|^{\mu_{j,k}}, \ |\xi| \rightarrow \infty;
	\end{equation}
	
	\item[(iii)] in view of condition ($\mathscr{B}_2$), we  obtain that a coordinate of the vector $v(t, \xi)$ satisfies  \eqref{part-smooth-coef} if and only if the corresponding coordinate of $\widehat{u}(t,\xi)$ also satisfies \eqref{part-smooth-coef}.
	
	\item [(iv)] we can replace the study of solutions of \eqref{general-system} by the study of solutions of  \eqref{triangular-system}.
	
\end{enumerate}

Next, we show that the matrices  $B(t,\xi) = [b_{j,k}(t,\xi)]_{m \times m}$ play no role on the regularity of solutions of system \eqref{triangular-system}. To do this, consider the space $\mathscr{S}(\mathbb{T}\times \mathbb{Z}^n)$ of all functions $a \in C^{\infty}(\mathbb{T}\times \mathbb{Z}^n)$ satisfying the following: $\forall \alpha \in \mathbb{Z}_+$ there exists  $N \in \mathbb{R}$ and positive constants $C,R$ such that
\begin{equation*}
	\sup_{t \in \mathbb{T}}	|\partial_t^{\alpha} a(t, \xi)|  \leq C|\xi|^{N},	 \ \forall  |\xi| \geq R.
\end{equation*}

Therefore, for any $a\in \mathscr{S}(\mathbb{T}\times \mathbb{Z}^n)$ it is well defined the operator $a(t,D_x): \mathcal{D}'(\mathbb{T}^{n+1}) \to \mathcal{D}'(\mathbb{T}^{n+1})$ given by
\begin{equation}\label{Fourier_Operator}
	a(t,D_x) w = \sum_{\xi \in \mathbb{Z}^N}a(t,\xi)\widehat{w}(t,\xi) \exp(i x \cdot \xi).	
\end{equation}
In particular, it follows that   $a(t,D_x)(C^{\infty}(\mathbb{T}^{n+1})) \subset C^{\infty}(\mathbb{T}^{n+1})$.

Now, note that all functions 
$\lambda_{k}, r_{j,k}$, $b_{j,k}$ belongs to $\mathscr{S}(\mathbb{T}\times \mathbb{Z}^n)$ and we may define  operators $\lambda_{j,k}(t,D_x)$, $r_{j,k}(t,D_x)$ and $b_{j,k}(t,D_x)$ as in \eqref{Fourier_Operator}. Moreover, we have the following:

\begin{lemma}\label{regularizing}
	Operators $b_{j,k}(t,D_x)$	are smoothing, namely, 
	$$
	b_{j,k}(t,D_x) (\mathcal{D}'(\mathbb{T}^{n+1})) \subset C^{\infty}(\mathbb{T}^{n+1}).
	$$
\end{lemma}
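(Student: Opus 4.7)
The plan is to apply the characterization of smooth functions via partial Fourier coefficients in Proposition \ref{prop-smooth}(b). From the definition \eqref{Fourier_Operator}, the $\xi$-th partial Fourier coefficient (with respect to $x$) of $b_{j,k}(t,D_x)w$ is precisely the product $b_{j,k}(t,\xi)\widehat{w}(t,\xi)$. So it suffices to prove that, for every $\alpha \in \mathbb{Z}_+$ and every $N>0$, one has
$$\sup_{t\in\mathbb{T}}\bigl|\partial_t^{\alpha}[b_{j,k}(t,\xi)\widehat{w}(t,\xi)]\bigr|\leq C|\xi|^{-N}$$
for $|\xi|$ sufficiently large, where $C$ depends only on $\alpha$, $N$ and $w$.

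The mechanism will be a Leibniz expansion combined with the contrasting growth rates of the two factors. Namely, I would write
$$\partial_t^{\alpha}\bigl[b_{j,k}(t,\xi)\widehat{w}(t,\xi)\bigr] = \sum_{\beta=0}^{\alpha}\binom{\alpha}{\beta}\,\partial_t^{\beta}b_{j,k}(t,\xi)\,\partial_t^{\alpha-\beta}\widehat{w}(t,\xi),$$
and estimate each summand separately. For the first factor, condition $(\mathscr{B}_3)$ asserts that $\sup_t|\partial_t^{\beta}b_{j,k}(t,\xi)|$ decays faster than any prescribed polynomial in $|\xi|$. For the second factor, since $w \in \mathcal{D}'(\mathbb{T}^{n+1})$, Proposition \ref{prop-smooth}(b) (applied coordinatewise) supplies a polynomial bound $\sup_t|\partial_t^{\alpha-\beta}\widehat{w}(t,\xi)|\leq C'|\xi|^{M}$, with $M=M(\alpha)$ fixed once $\alpha$ is fixed. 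Given an arbitrary target $N$, I would then invoke $(\mathscr{B}_3)$ with exponent $N+M$ in place of $N$, so that each of the finitely many summands is bounded by $C|\xi|^{-N}$ for $|\xi|$ large, and therefore so is the full sum.

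Since $\alpha$ and $N$ are arbitrary, the required bound on the partial Fourier coefficients of $b_{j,k}(t,D_x)w$ holds at every order, and Proposition \ref{prop-smooth}(b) then yields $b_{j,k}(t,D_x)w \in C^{\infty}(\mathbb{T}^{n+1})$. No real obstacle arises: the content of the lemma is simply that the super-polynomial decay built into $(\mathscr{B}_3)$ overwhelms the polynomial growth of the partial Fourier coefficients of any distribution, so the whole argument is a single Leibniz-plus-exponent-chasing estimate, and the only bookkeeping to keep straight is that $M$ depends on $\alpha$ but not on $N$, which is what allows $N'=N+M$ to be chosen uniformly in $\xi$.
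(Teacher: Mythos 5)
Your argument is correct and is essentially the paper's own proof: the same Leibniz expansion of $\partial_t^{\alpha}[b_{j,k}(t,\xi)\widehat{w}(t,\xi)]$, the same polynomial bound on $\partial_t^{\gamma}\widehat{w}(t,\xi)$ from Proposition \ref{prop-smooth}(b), and the same invocation of $(\mathscr{B}_3)$ with the shifted exponent $\widetilde{N}=N+M$ to absorb the polynomial growth. The only (harmless) bookkeeping point, which you already note, is that $M$ may depend on the derivative order, so one takes the maximum over the finitely many $\gamma\leq\alpha$.
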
	

\begin{proof}
	Let $w \in \mathcal{D}'(\mathbb{T}^{n+1})$ be a distribution, $\alpha \in \mathbb{Z}_+$ and $N>0$. Given $0\leq\gamma \leq \alpha$, it  follows from Proposition \ref{prop-smooth} the existence of constants $M,C_1,R_1>0$ such that
	$\sup_{t \in \mathbb{T}}|\partial^{\gamma}_t \widehat{w}(t,\xi)| \leq C_1|\xi|^M$, for all $|\xi|\geq R_1$. Now, for $\widetilde{N}=N+M$, we obtain constants $C_2,R_2>0$ satisfying
	$$
	\sup_{t \in \mathbb{T}}|\partial_t^{\alpha-\gamma}  b_{j,k}(t,\xi)| \leq C_2|\xi|^{-\widetilde{N}}, \ \forall |\xi|\geq R_2,
	$$
	in view of condition $(\mathscr{B}_3)$.
	
	Hence,
	\begin{align*}
		|\partial^{\alpha}_t [b_{j,k}(t,\xi)\widehat{w}(t,\xi)] |
		& \leq 
		\sum_{\gamma \leq \alpha} 
		\binom{\alpha}{\gamma}|\partial_t^{\alpha-\gamma}  b_{j,k}(t,\xi)| \, |\partial_t^{\gamma}  \widehat{w}(t,\xi)| \\
		& \leq C |\xi|^{-N},
	\end{align*}
	for all $|\xi|\geq \max\{R_1,R_2\}$. Then, $b_{j,k}(t,D_x)w \in C^{\infty}(\mathbb{T}^{n+1})$.
	
\end{proof}

Next, we show that a strongly triangularizable system can be reduced into a triangular form.

\begin{theorem}\label{rediction-theorem}
	Suppose that  $Q(t,\xi)$  is a strongly triangularizable symbol and set
	$$
	\Lambda(t,D_x) = [\lambda_{j,k}(t,D_x)]_{m \times m}, \ 
	\mathcal{N}(t,D_x) = [r_{j,k}(t,D_x)]_{m \times m} \  \textrm{ and } \
	B(t,D_x) = [b_{j,k}(t,D_x)]_{m \times m}.
	$$

	Then, the following statements are equivalent:
	\begin{enumerate}
		\item [a)] system $P = D_t + Q(t,D_x)$ is  globally hypoelliptic;
		
		\item [b)] system $L = D_t + \Lambda(t,D_x) + \mathcal{N}(t,D_x) +  B(t,D_x)$  is  globally hypoelliptic;
		
		\item [c)] system $T= D_t + \Lambda(t,D_x) + \mathcal{N}(t,D_x)$ is  globally hypoelliptic;		
	\end{enumerate}
	
\end{theorem}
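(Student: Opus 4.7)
The plan is to establish the chain of equivalences in two separate conjugations: first (a) $\Leftrightarrow$ (b) is obtained by conjugating $P$ with the matrix $S(t,\xi)$ (i.e., the change of variables recorded in item (i) preceding the theorem), and then (b) $\Leftrightarrow$ (c) by absorbing $B(t,D_x)$ as a smoothing remainder via Lemma \ref{regularizing}.

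For (a) $\Leftrightarrow$ (b), I start with $Pu=f$, $u\in\mathcal{D}'_m(\mathbb{T}^{n+1})$, $f\in C^\infty_m(\mathbb{T}^{n+1})$, and take the partial Fourier transform to obtain \eqref{general-system}. Defining $v(t,\xi)=S^{-1}(t,\xi)\widehat{u}(t,\xi)$ and $g(t,\xi)=S^{-1}(t,\xi)\widehat{f}(t,\xi)$, a short computation based on $(\mathscr{B}_1)$ and the product rule rewrites \eqref{general-system} as \eqref{triangular-system}, i.e., $Lv=g$ on the Fourier side. The content of the equivalence is therefore the preservation of regularity classes under the map $v\mapsto Sv$. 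This is where condition $(\mathscr{B}_2)$ enters: the polynomial bounds on $\partial_t^\alpha S$ and $\partial_t^\alpha S^{-1}$ combined with Leibniz and Proposition \ref{prop-smooth} show that the coordinates of $v$ satisfy \eqref{part-smooth-coef} with some exponent $M$ (resp.\ with every $M<0$) if and only if the coordinates of $\widehat{u}$ do, and the same for $g$ versus $\widehat{f}$. Thus $u\in C^\infty_m \Leftrightarrow v\in C^\infty_m$ and $u\in \mathcal{D}'_m \Leftrightarrow v\in\mathcal{D}'_m$, and analogously for $f,g$; this yields the equivalence of global hypoellipticity for $P$ and $L$.

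For (b) $\Leftrightarrow$ (c), I exploit the fact that $L=T+B(t,D_x)$ at the operator level. Assume $T$ is globally hypoelliptic and let $Lu=f$ with $u\in\mathcal{D}'_m$, $f\in C^\infty_m$. Then $Tu=f-B(t,D_x)u$, and since by Lemma \ref{regularizing} each entry $b_{j,k}(t,D_x)$ maps $\mathcal{D}'(\mathbb{T}^{n+1})$ into $C^\infty(\mathbb{T}^{n+1})$, the right-hand side is smooth; hence $u\in C^\infty_m$. The reverse direction is symmetric: if $L$ is globally hypoelliptic and $Tu=f\in C^\infty_m$, then $Lu=f+B(t,D_x)u\in C^\infty_m$, forcing $u\in C^\infty_m$.

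The main obstacle is the bookkeeping in (a) $\Leftrightarrow$ (b): one must explicitly check that the polynomial loss of order $\gamma$ provided by $(\mathscr{B}_2)$ is absorbed when testing \eqref{part-smooth-coef} for \emph{every} $M<0$, and this demands applying Leibniz to each $\partial_t^\alpha(S v)$ and $\partial_t^\alpha(S^{-1}\widehat{u})$ with the observation that a polynomial times a rapidly decaying sequence is still rapidly decaying. Once this is spelled out, the two remaining equivalences are immediate.
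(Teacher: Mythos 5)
Your proposal is correct and follows essentially the same route as the paper: conjugation by $S(t,\xi)$ together with $(\mathscr{B}_1)$--$(\mathscr{B}_2)$ and Proposition \ref{prop-smooth} to transfer distributional and smooth regularity between $P$ and $L$ (the paper phrases this by building the series $u=\sum_\xi S\widehat{v}\,e^{ix\cdot\xi}$ and using $SB=D_tS$, resp.\ $D_tS^{-1}+BS^{-1}=0$), and then Lemma \ref{regularizing} to absorb $B(t,D_x)$ as a smoothing term for the equivalence of $L$ and $T$. No gaps.
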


\begin{proof}
	$(a)\Longrightarrow (b)$. Let   $v \in \mathcal{D}'_m(\mathbb{T}^{n+1})$ be a solution of $Lv = g \in C^{\infty}_m(\mathbb{T}^{n+1})$ and consider 
	\begin{equation*}
		u = \sum_{\xi \in \mathbb{Z}^N} S(t,\xi) \widehat{v}(t,\xi) \exp(i x \cdot \xi) \ \textrm{ and } \ 
		f = \sum_{\xi \in \mathbb{Z}^N} S(t,\xi) \widehat{g}(t,\xi) \exp(i x \cdot \xi).
	\end{equation*}
	
	Clearly, $u \in \mathcal{D}'_m(\mathbb{T}^{n+1})$ and $f \in C^{\infty}_m(\mathbb{T}^{n+1})$. Moreover, since
	$$
	S(t,\xi)B(t,\xi) = D_tS(t,\xi) \ \textrm{ and } \ Q(t,\xi) S(t,\xi) = S(t,\xi)[\Lambda(t,\xi) + \mathcal{N}(t,\xi)],
	$$
	we obtain
	\begin{align*}
		Pu & = \sum_{\xi \in \mathbb{Z}^N}\left\{ D_t[S(t,\xi) \widehat{v}(t,\xi)] + Q(t,\xi) S(t,\xi) \widehat{v}(t,\xi) \right\}\exp(i x \cdot \xi) \\
		& = \sum_{\xi \in \mathbb{Z}^N}\left\{ [D_tS(t,\xi)] \widehat{v}(t,\xi) + S(t,\xi) [D_t \widehat{v}(t,\xi) + (\Lambda(t,\xi) + \mathcal{N}(t,\xi) ) \widehat{v}(t,\xi)] \right\}\exp(i x \cdot \xi) \\
		& = \sum_{\xi \in \mathbb{Z}^N}\left\{ S(t,\xi)[B(t,\xi)\widehat{v}(t,\xi) + D_t \widehat{v}(t,\xi) + (\Lambda(t,\xi) + \mathcal{N}(t,\xi) ) \widehat{v}(t,\xi)] \right\}\exp(i x \cdot \xi) \\
		& = \sum_{\xi \in \mathbb{Z}^N} S(t,\xi) \widehat{g}(t,\xi) \exp(i x \cdot \xi) \\
		& = f.
	\end{align*}
	
	Hence, $u \in  C^{\infty}_m(\mathbb{T}^{n+1})$ which imply 
	$v \in  C^{\infty}_m(\mathbb{T}^{n+1})$. Therefore, $L$  is  globally hypoelliptic.

	$(b)\Longrightarrow (a)$.  We can use a similar argument, however it is necessary to observe that 
	$$
	D_tS^{-1}(t,\xi) + B(t,\xi)S^{-1}(t,\xi)\equiv 0,
	$$
	since $D_t[S^{-1}(t,\xi)S(t,\xi)] \equiv 0$. Then, given $u \in \mathcal{D}'_m(\mathbb{T}^{n+1})$ such that $Pu = f \in C^{\infty}_m(\mathbb{T}^{n+1})$
	we define 
	\begin{equation*}
		v = \sum_{\xi \in \mathbb{Z}^N} S^{-1}(t,\xi) \widehat{u}(t,\xi) \exp(i x \cdot \xi) \ \textrm{ and } \ 
		g = \sum_{\xi \in \mathbb{Z}^N} S^{-1}(t,\xi) \widehat{f}(t,\xi) \exp(i x \cdot \xi).
	\end{equation*}
	
	In this case, we have $Lv = g$  and then $v \in  C^{\infty}_m(\mathbb{T}^{n+1})$. Therefore, we obtain 
	$u \in  C^{\infty}_m(\mathbb{T}^{n+1})$ and the global hypoellipticity of $P$.

	$(b)\Longrightarrow (c)$. First, note that
	$$
	B(t,D_x) (\mathcal{D}'_m(\mathbb{T}^{n+1})) \subset C^{\infty}_m(\mathbb{T}^{n+1}),
	$$
	in view of Lemma \ref{regularizing} 
	
	Now, consider $u \in\mathcal{D}'_m(\mathbb{T}^{n+1})$ such that $Tu \in C^{\infty}_m(\mathbb{T}^{n+1})$. Then, 
	$$
	Lu =  [Tu + B(t,D_x)u]  \in C^{\infty}_m(\mathbb{T}^{n+1}) \Longrightarrow 
	u \in C^{\infty}_m(\mathbb{T}^{n+1}),
	$$
	implying  that  $T$  is  globally hypoelliptic.

	$(c)\Longrightarrow (b)$. Let $u \in\mathcal{D}'_m(\mathbb{T}^{n+1})$ satisfying $Lu \in C^{\infty}_m(\mathbb{T}^{n+1})$. Then, 
	$$
	Tu = Lu - B(t,D_x)u   \Longrightarrow 
	Tu \in C^{\infty}_m(\mathbb{T}^{n+1}) 
	\Longrightarrow
	u \in C^{\infty}_m(\mathbb{T}^{n+1}),
	$$
	hence, $L$  is  globally hypoelliptic.

\end{proof}

\begin{example}\label{exemple_ST}
	Consider $P = D_t + Q(t, D_x)$ given by
	$$
	Q (t,D_x) = \left [
	\begin{array}{cc}
		a(t)  &  b(t) D_x\\[2mm]
		b(t) D_x  & a(t)
	\end{array} \right], \ (t,x) \in  \mathbb{T}^2,
	$$
	where $a,b$ are  smooth real-valued functions and $b \neq 0$.
	
	The eigenvalues of $Q(t,\xi)$ are $\lambda(t,\xi) = a(t) \pm b(t)\xi$ and we can choose $h(t,\xi)\equiv (1 , 1)$ as the eigenvector associated to $\lambda_1 (t,\xi) = a(t) + b(t)\xi$. Note that, by choosing
	$$
	S(t,\xi) = 	
	\left [
	\begin{array}{cc}
		1  &  0\\[2mm]
		1  & 1
	\end{array} 
	\right]
	\ \textrm{ and } \ 
	S^{-1}(t,\xi) =
	\left [
	\begin{array}{rr}
		1  &  0\\[2mm]
		-1   & 1
	\end{array} 
	\right],
	$$
	we obtain 
	$$
	S^{-1}(t,\xi)
	Q(t,\xi)
	S(t,\xi)
	=
	\left [
	\begin{array}{cc}
		\lambda_1 (t,\xi)  &   b(t) \xi\\[2mm]
		0  & \lambda_2 (t,\xi)
	\end{array} \right].
	$$
	
	Therefore, $Q (t,\xi)$ is strongly triangularizable with diagonal bounded from below, since its eigenvalues are real-valued functions and $B(t, \xi) \equiv 0$, $\forall \xi \in \mathbb{Z}^n$. In this case,
	$$
	\Lambda(t,D_x) = 
	\left [
	\begin{array}{cc}
		a(t) + b(t)D_x  &   0 \\[2mm]
		0  & a(t) - b(t)D_x
	\end{array} \right] \ \textrm{ and } \ 
	\mathcal{N}(t,D_x) =	\left [
	\begin{array}{cc}
		0   &   b(t) D_x\\[2mm]
		0  & 0
	\end{array} \right].
	$$
	
\end{example}

\subsection{The study of triangularizable systems}

Admit that $P = D_t + Q(t,D_x)$ is strongly triangularizable,  let 
$u \in \mathcal{D}'_{m}(\mathbb{T}^{n+1})$ be a solution of 
$P u = f \in {C}^{\infty}_{m}(\mathbb{T}^{n+1})$. In view of  Theorem \ref{rediction-theorem},  it is sufficient to study system
$$
D_t v(t, \xi) + \Lambda(t,\xi)v(t, \xi)  + \mathcal{N}(t,\xi)v(t, \xi) = g(t, \xi),  
$$
or equivalently, 
\begin{equation}\label{general-traing-sys}
	\left\{
	\begin{array}{r}
		D_tv_1(t, \xi) + \lambda_{1}(t,\xi)v_1(t, \xi)  + \ldots + r_{1,m}(t, \xi) v_m(t, \xi) = g_1(t, \xi)\\
		D_tv_2(t, \xi)\lambda_{2}(t,\xi)v_2(t, \xi)  + \ldots + r_{2,m}(t, \xi) v_m(t, \xi) = g_2(t, \xi) \\
		\vdots \\
		D_tv_m(t, \xi) + \lambda_{m}(t,\xi)v_m(t,\xi) = g_m(t,\xi) \\
	\end{array}
	\right.,
\end{equation}
where $\widehat{u}(t,\xi) = S(t,\xi)v(t, \xi)$ and $\widehat{f}(t,\xi) = S(t,\xi)g(t, \xi)$.

Since the  terms $D_tv_k(t, \xi) + \lambda_{k}(t,\xi)v_k(t, \xi)$ (in each line of \eqref{general-traing-sys}) play an important role in this analysis, then let us introduce the following family of operators
$$
\mathscr{L}_k = \{\mathscr{L}_k^{\xi} = D_t + \lambda_k(t,\xi), \ \xi \in \mathbb{Z}^n \},
$$
for each $k \in \{1, \ldots, m\}$, as well as the following definition:

\begin{definition}
	We say that the family  $\mathscr{L}_k$ is globally hypoelliptic  if the following conditions hold: for every $\omega \in \mathcal{D}'(\mathbb{T}^{n+1})$  and $h \in C^{\infty}(\mathbb{T}^{n+1})$ satisfying the equations
	\begin{equation}\label{Lk_equation-h}
		\mathscr{L}_k^{\xi} \, \widehat{\omega}(t,\xi) = \widehat{h}(t,\xi),  \ \forall \xi \in \mathbb{Z}^n,
	\end{equation}
	we have $\omega \in C^{\infty}(\mathbb{T}^{n+1})$. 
\end{definition}

For each $k \in \{1, \ldots, m\}$ we define the operators (acting on $C^{\infty}(\mathbb{T}^n)$)
\begin{equation*}
	\lambda_{0,k}(D_x) w(x) =  \sum_{\xi \in \mathbb{Z}^n}{e^{i x \cdot \eta} \lambda_{0,k}(\xi) \widehat{w}(\xi)},
\end{equation*}
and 
\begin{equation*}
	\mathscr{L}_{0,k} = D_t + \lambda_{0,k}(D_x),  
\end{equation*}
where 
$\lambda_{0,k}(\xi) = (2\pi)^{-1}\int_{0}^{2\pi}\lambda_{k}(t, \xi)dt.$

Let us  recall that, by Theorem \ref{AGKM}, the operator $\mathscr{L}_{0,k}$ is globally hypoelliptic
if and only if there exist positive constants $C$, $M$ and $R$ satisfying
$$
|\tau+\lambda_{0,k}(\xi)|\geq C(|\tau| + |\xi|)^{-M}, \ \textrm{for all} \ |\tau| + |\xi| \geq R,
$$
or equivalently, there exist positive constants $\widetilde{C}$, $\widetilde{M}$ and $\widetilde{R}$ such that
\begin{equation}\label{lambda_smooth_est}
	|1-e^{\pm2\pi i\lambda_{0,k}(\xi)}|\geq \widetilde{C}|\xi|^{-\widetilde{M}}, \ \textrm{for all} \  |\xi|\geq \widetilde{R}.
\end{equation}

The next result exhibits a necessary condition for the global hypoellipticity of a family  $\mathscr{L}_k$.

\begin{proposition}\label{Z_k-is-finite}
	If  $\mathscr{L}_k$ is globally hypoelliptic, then the set  $Z_{k} = \{ \xi \in \mathbb{Z}^n; \ \lambda_{0,k}(\xi)  \in \mathbb{Z}  \}$ is finite.
\end{proposition}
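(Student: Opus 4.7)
The plan is to argue by contraposition: assuming $Z_k$ is infinite, I will manufacture a distribution $\omega \in \mathcal{D}'(\mathbb{T}^{n+1}) \setminus C^{\infty}(\mathbb{T}^{n+1})$ satisfying $\mathscr{L}_k^{\xi}\widehat{\omega}(t,\xi) \equiv 0$ for every $\xi \in \mathbb{Z}^n$, contradicting global hypoellipticity of the family $\mathscr{L}_k$.

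For each $\xi \in Z_k$, the ODE $D_t v(t) + \lambda_k(t,\xi)v(t) = 0$ has general solution
\begin{equation*}
  v(t,\xi) = c \, \exp\!\Bigl(-i\int_0^t \lambda_k(s,\xi)\,ds\Bigr),
\end{equation*}
and the periodicity condition $v(2\pi,\xi)=v(0,\xi)$ is exactly $\exp(-2\pi i \lambda_{0,k}(\xi))=1$, which is guaranteed since $\lambda_{0,k}(\xi)\in\mathbb{Z}$. Pick any sequence $\{\xi_\ell\}_{\ell\in\mathbb{N}}\subset Z_k$ with $|\xi_\ell|\to\infty$, choose the solution $\omega_{\xi_\ell}(t)=\exp(-i\int_0^t \lambda_k(s,\xi_\ell)\,ds)$, and define
\begin{equation*}
  \widehat{\omega}(t,\xi)=\begin{cases} \omega_{\xi_\ell}(t), & \xi=\xi_\ell,\\ 0, & \text{otherwise.}\end{cases}
\end{equation*}
By construction, $\mathscr{L}_k^{\xi}\widehat{\omega}(t,\xi)\equiv 0$ for every $\xi$, so we may take $h\equiv 0\in C^\infty(\mathbb{T}^{n+1})$ in \eqref{Lk_equation-h}.

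The critical point is to check that $\omega$ is a distribution but not a smooth function. Setting $I(t,\xi)=\int_0^t \Im\lambda_k(s,\xi)\,ds$, one has $|\omega_{\xi_\ell}(t)|=\exp I(t,\xi_\ell)$, with $I(0,\xi_\ell)=I(2\pi,\xi_\ell)=2\pi\,\Im\lambda_{0,k}(\xi_\ell)=0$. Condition ($\mathscr{B}_4$) gives $\Im\lambda_k(s,\xi)\geq -\theta_k$, so writing $I(t,\xi)=-\int_t^{2\pi}\Im\lambda_k(s,\xi)\,ds$ for the upper estimate yields the two-sided bound $|I(t,\xi_\ell)|\leq 2\pi\theta_k$. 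Consequently $e^{-2\pi\theta_k}\leq |\omega_{\xi_\ell}(t)|\leq e^{2\pi\theta_k}$, uniformly in $\ell$ and $t$. In particular $\sup_t|\widehat{\omega}(t,\xi_\ell)|\geq e^{-2\pi\theta_k}$ does not decay, so by Proposition \ref{prop-smooth}(b), $\omega\notin C^\infty(\mathbb{T}^{n+1})$.

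It remains to confirm $\omega\in\mathcal{D}'(\mathbb{T}^{n+1})$, i.e.\ polynomial bounds on all $t$-derivatives of $\widehat{\omega}(\cdot,\xi)$. Differentiating the ODE $\partial_t\omega_{\xi_\ell}=-i\lambda_k(t,\xi_\ell)\omega_{\xi_\ell}$ inductively expresses $\partial_t^{\alpha}\omega_{\xi_\ell}$ as a polynomial in $\lambda_k$ and its $t$-derivatives times $\omega_{\xi_\ell}$; applying the polynomial growth bound \eqref{poly-growth} from condition ($\mathscr{A}$) together with the uniform bound on $|\omega_{\xi_\ell}|$ yields $\sup_t|\partial_t^{\alpha}\widehat{\omega}(t,\xi_\ell)|\leq C_\alpha|\xi_\ell|^{M_\alpha}$, as required by Proposition \ref{prop-smooth}(b). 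The main obstacle in this argument is precisely the potential growth of $|\omega_\xi(t)|$: without condition ($\mathscr{B}_4$), $\Im\lambda_k(t,\xi)$ could be arbitrarily negative on part of the period, forcing $\exp I(t,\xi)$ to blow up super-polynomially and breaking the distributional bound --- here the sign condition is essential.
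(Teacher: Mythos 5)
Your construction follows the same overall strategy as the paper (contraposition: for $\xi_\ell\in Z_k$ build $2\pi$-periodic null solutions of $\mathscr{L}_k^{\xi_\ell}$ and sum them into a non-smooth distribution), but there is a genuine gap: you invoke condition ($\mathscr{B}_4$) to bound $|\omega_{\xi_\ell}(t)|=\exp\bigl(\int_0^t\Im\lambda_k(s,\xi_\ell)\,ds\bigr)$, and ($\mathscr{B}_4$) is \emph{not} a hypothesis of Proposition \ref{Z_k-is-finite}. In the paper's setup that proposition is stated for the family $\mathscr{L}_k$ attached to a strongly triangularizable symbol, i.e.\ under ($\mathscr{A}$), ($\mathscr{B}_1$)--($\mathscr{B}_3$) only; the bound-from-below condition ($\mathscr{B}_4$) is an optional extra assumption that enters later, in Theorem \ref{tec-theorem}. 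So what you prove is a weaker statement, and your closing claim that ``the sign condition is essential'' is incorrect.

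The missing idea is a normalization that the paper uses precisely to avoid ($\mathscr{B}_4$): for each $\ell$ choose $t_\ell$ maximizing $t\mapsto\int_0^{t}\Im\lambda_k(r,\xi_\ell)\,dr$ and set
\begin{equation*}
\widehat{u}(t,\xi_\ell)=\kappa_\ell\exp\Bigl(-i\int_0^{t}\lambda_k(r,\xi_\ell)\,dr\Bigr),
\qquad
\kappa_\ell=\exp\Bigl(-\int_0^{t_\ell}\Im\lambda_k(r,\xi_\ell)\,dr\Bigr).
\end{equation*}
Then $|\widehat{u}(t,\xi_\ell)|=\exp\bigl(\int_0^{t}\Im\lambda_k-\int_0^{t_\ell}\Im\lambda_k\bigr)\le 1$ for all $t$, while $|\widehat{u}(t_\ell,\xi_\ell)|=1$, so non-decay (hence non-smoothness) and the uniform modulus bound come for free, and the polynomial bounds on $t$-derivatives follow from \eqref{poly-growth} exactly as in your last paragraph — with no sign restriction on $\Im\lambda_k$. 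Your worry about super-polynomial blow-up of the unnormalized solution is well-founded (e.g.\ $\Im\lambda_k(t,\xi)=|\xi|\sin t$ satisfies ($\mathscr{A}$) but makes $\exp I(t,\xi)$ grow like $e^{c|\xi|}$), but the correct fix is this renormalization of the base point, not an extra hypothesis.
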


\begin{proof}
	Suppose that $Z_{k}$ is infinite an let $\{\xi_{\ell}\}_{\ell \in \mathbb{N}} \subset Z_{k}$ be an increasing sequence. For each $\ell \in \mathbb{N}$, choose $t_\ell \in [0,2\pi]$
	satisfying
	$$
	\int_{0}^{t_\ell} \Im \lambda_{k}(r,\xi_{\ell}) dr = \max_{t \in \mathbb{T}} 
	\int_{0}^{t} \Im \lambda_{k}(r,\xi_{\ell}) dr,
	$$
	and, by setting
	$
	\kappa_{\ell} \doteq \exp \left( -  \int_{0}^{t_\ell} \Im \lambda_{k}(r,\xi_{\ell}  )dr \right),
	$
	define
	$$
	\widehat{u}(t, \xi) = 
	\left\{
	\begin{array}{l}
		\kappa_{\ell} \exp \left( - i \int_{0}^{t}  \lambda_{k}(r,\xi_{\ell} )dr \right), \ \textrm{ if } \ \xi = \xi_\ell,\\
		0, \ \textrm{ if } \ \xi \neq \xi_\ell.
	\end{array}
	\right.
	$$

	Since  $\xi_{\ell} \in Z_k$, it follows that $\widehat{u}(\cdot, \xi)$ are smooth and $2\pi$-periodic.  Moreover, $\{\widehat{u}(t, \xi)\}_{\xi \in \mathbb{Z}^n}$ defines a distribution $u \in \mathcal{D}'(\mathbb{T}^{n+1})  \setminus C^{\infty}(\mathbb{T}^{n+1})$ such that
	$\mathscr{L}_k^{\xi}  \widehat{u}(t, \xi) = 0$, for every $\xi \in \mathbb{Z}^n$. Hence, the family $\mathscr{L}_k$ is not globally hypoelliptic.

\end{proof}

Now,  the next result presents  a first connection between the analysis of the system $P = D_t + Q(t,D_x)$ and the related constant coefficient operators $\mathscr{L}_{0,k}$.

\begin{theorem}\label{tec-theorem}
	Admit that  $\{\lambda_{k}(t,\xi)\}$ satisfies  ($\mathscr{B}_4$).  
	Then,  $\mathscr{L}_{0,k}$ is globally hypoelliptic if and only if the family $\mathscr{L}_k$ is globally hypoelliptic.
\end{theorem}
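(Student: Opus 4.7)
The plan is to prove both directions directly from the periodic-solution formula for the ODE $\mathscr{L}_k^\xi\widehat{\omega}(\cdot,\xi)=\widehat{h}(\cdot,\xi)$, rather than conjugating $\mathscr{L}_k^\xi$ to $D_t+\lambda_{0,k}(\xi)$ by the substitution $\widehat{\omega}=e^{-iA}\widehat{v}$ with $A(t,\xi)=\int_0^t[\lambda_k(r,\xi)-\lambda_{0,k}(\xi)]\,dr$; this substitution need not preserve the polynomial bounds relating $\mathcal{D}'(\mathbb{T}^{n+1})$ and $C^\infty(\mathbb{T}^{n+1})$, because $(\mathscr{B}_4)$ alone only yields $|e^{\pm iA(t,\xi)}|\leq e^{2\pi(\theta_k+\Im\lambda_{0,k}(\xi))}$, which can grow exponentially in $|\xi|$.

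For the direction $\mathscr{L}_{0,k}$ globally hypoelliptic $\Rightarrow$ $\mathscr{L}_k$ globally hypoelliptic, let $\omega\in\mathcal{D}'(\mathbb{T}^{n+1})$ and $h\in C^\infty(\mathbb{T}^{n+1})$ satisfy $\mathscr{L}_k^\xi\widehat{\omega}(\cdot,\xi)=\widehat{h}(\cdot,\xi)$ for all $\xi$. Theorem~\ref{AGKM}(a) applied to $\mathscr{L}_{0,k}$ makes the set $\{\xi:\lambda_{0,k}(\xi)\in\mathbb{Z}\}$ finite, so for $|\xi|$ large the ODE has the unique periodic solution
\[
\widehat{\omega}(t,\xi)=\frac{-i}{1-e^{-2\pi i\lambda_{0,k}(\xi)}}\int_t^{t+2\pi}\widehat{h}(s,\xi)\,e^{-i\int_t^s\lambda_k(r,\xi)\,dr}\,ds.
\]
Condition $(\mathscr{B}_4)$ gives, for $s\in[t,t+2\pi]$,
\[
\int_t^s\Im\lambda_k(r,\xi)\,dr = 2\pi\Im\lambda_{0,k}(\xi)-\int_s^{t+2\pi}\Im\lambda_k(r,\xi)\,dr\leq 2\pi(\Im\lambda_{0,k}(\xi)+\theta_k),
\]
so $|e^{-i\int_t^s\lambda_k\,dr}|\leq e^{2\pi\theta_k}e^{2\pi\Im\lambda_{0,k}(\xi)}$; combined with the identity $|1-e^{-2\pi i\lambda_{0,k}(\xi)}|=e^{2\pi\Im\lambda_{0,k}(\xi)}\,|1-e^{2\pi i\lambda_{0,k}(\xi)}|$ the exponential factors cancel, leaving
\[
\sup_t|\widehat{\omega}(t,\xi)|\leq \frac{2\pi e^{2\pi\theta_k}}{|1-e^{2\pi i\lambda_{0,k}(\xi)}|}\sup_s|\widehat{h}(s,\xi)|.
\]
Theorem~\ref{AGKM}(c)(iii) bounds $|1-e^{2\pi i\lambda_{0,k}(\xi)}|$ below by a negative power of $|\xi|$, and $h\in C^\infty$ makes $\sup_s|\widehat{h}(s,\xi)|$ decay faster than any polynomial, so $\sup_t|\widehat{\omega}(t,\xi)|$ does as well. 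Higher $t$-derivatives follow inductively from $\partial_t\widehat{\omega}=i\widehat{h}-i\lambda_k\widehat{\omega}$ and the polynomial bounds on $\lambda_k$ and its $t$-derivatives provided by $(\mathscr{A})$.

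For the converse I argue by contraposition. If $Z_k=\{\xi:\lambda_{0,k}(\xi)\in\mathbb{Z}\}$ is infinite, Proposition~\ref{Z_k-is-finite} already shows that $\mathscr{L}_k$ is not globally hypoelliptic. Otherwise, Theorem~\ref{AGKM}(c) supplies $\xi_\ell\to\infty$ with $|1-e^{2\pi i\lambda_{0,k}(\xi_\ell)}|<|\xi_\ell|^{-\ell}$; this smallness forces $\Im\lambda_{0,k}(\xi_\ell)\to 0$ and produces integers $n_\ell$ with $|\lambda_{0,k}(\xi_\ell)-n_\ell|\leq C|\xi_\ell|^{-\ell}$. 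Set $\widehat{u}(t,\xi)=0$ for $\xi\notin\{\xi_\ell\}$ and
\[
\widehat{u}(t,\xi_\ell) = \exp\left(-i\int_0^t[\lambda_k(r,\xi_\ell)-\lambda_{0,k}(\xi_\ell)]\,dr-in_\ell t\right).
\]
Since the integrand in the exponent has zero mean and $n_\ell\in\mathbb{Z}$, $\widehat{u}(\cdot,\xi_\ell)$ is $2\pi$-periodic; by $(\mathscr{B}_4)$ together with $\Im\lambda_{0,k}(\xi_\ell)\to 0$, $|\widehat{u}(t,\xi_\ell)|$ is bounded above and below by constants independent of $\ell$; and a direct computation yields $\mathscr{L}_k^{\xi_\ell}\widehat{u}(\cdot,\xi_\ell)=(\lambda_{0,k}(\xi_\ell)-n_\ell)\widehat{u}(\cdot,\xi_\ell)$. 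Hence the resulting $h$ belongs to $C^\infty(\mathbb{T}^{n+1})$ while $u$ is a distribution not in $C^\infty(\mathbb{T}^{n+1})$, contradicting global hypoellipticity of $\mathscr{L}_k$.

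The main difficulty is the forward direction: because $(\mathscr{B}_4)$ only bounds $\Im\lambda_k$ from below, $\Im\lambda_{0,k}(\xi)$ may grow polynomially without bound, and then the naive conjugation does not preserve the regularity scale. Working instead with the integral representation on $[t,t+2\pi]$ extracts an exact cancellation between $|e^{-i\int_t^s\lambda_k\,dr}|$ and the lower bound on $|1-e^{-2\pi i\lambda_{0,k}}|$, and this cancellation is what makes the one-sided bound $(\mathscr{B}_4)$ sufficient.
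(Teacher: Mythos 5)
Your forward direction is built on the displayed periodic-solution formula, and that formula is mis-oriented: with $D_t=i^{-1}\partial_t$, the function
\[
\widehat{\omega}(t,\xi)=\frac{-i}{1-e^{-2\pi i\lambda_{0,k}(\xi)}}\int_t^{t+2\pi}\widehat{h}(s,\xi)\,e^{-i\int_t^s\lambda_k(r,\xi)\,dr}\,ds
\]
satisfies $D_t\widehat{\omega}-\lambda_k\widehat{\omega}=\widehat{h}$, not $\mathscr{L}_k^{\xi}\widehat{\omega}=\widehat{h}$ (test it with $\lambda_k\equiv\lambda$ constant and $\widehat{h}\equiv 1$: your formula returns $-1/\lambda$, while the periodic solution of $D_ty+\lambda y=1$ is $1/\lambda$). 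The unique periodic solution is either the forward form
\[
\widehat{\omega}(t,\xi)=\frac{i}{e^{2\pi i\lambda_{0,k}(\xi)}-1}\int_t^{t+2\pi}e^{\,i\int_t^s\lambda_k(r,\xi)\,dr}\,\widehat{h}(s,\xi)\,ds,
\]
which is (up to an inessential constant) the paper's choice, or the backward form over $[t-2\pi,t]$ with exponent $e^{-i\int_s^t\lambda_k}$ and prefactor $i\,(1-e^{-2\pi i\lambda_{0,k}})^{-1}$. In the forward form, $(\mathscr{B}_4)$ bounds $|e^{\,i\int_t^s\lambda_k}|=e^{-\int_t^s\Im\lambda_k}\leq e^{2\pi\theta_k}$ outright and Theorem \ref{AGKM}(c)(iii) handles the denominator, so no cancellation is needed at all; the ``main difficulty'' you describe is an artifact of your chosen representation. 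Your identity $|1-e^{-2\pi i\lambda_{0,k}}|=e^{2\pi\Im\lambda_{0,k}}|1-e^{2\pi i\lambda_{0,k}}|$ together with $\int_s^t\Im\lambda_k\leq 2\pi\Im\lambda_{0,k}+2\pi\theta_k$ is exactly what rescues the backward form, so your estimate survives once the integral is reoriented; but as written the key display does not solve the equation, and the rest of your forward argument (including the inductive derivative bounds) is anchored to it.

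Your converse is correct and takes a genuinely different, leaner route than the paper. The paper chooses $t_\ell$ maximizing $\int_0^t\Im\lambda_k(r,\xi_\ell)\,dr$, fixes a bump function $\phi$ supported away from the limit point $t_0$, defines $\widehat{f}(\cdot,\xi_\ell)$ as a periodic extension of $(1-e^{-2\pi i\lambda_{0,k}(\xi_\ell)})e^{-i\int_{t_\ell}^{t}\lambda_k}\phi(t)$, and recovers $\widehat{u}$ from the solution formula; you instead take the quasimode $\widehat{u}(t,\xi_\ell)=\exp(-i\int_0^t[\lambda_k-\lambda_{0,k}]\,dr-in_\ell t)$ and use $\mathscr{L}_k^{\xi_\ell}\widehat{u}=(\lambda_{0,k}(\xi_\ell)-n_\ell)\widehat{u}$ with $|\lambda_{0,k}(\xi_\ell)-n_\ell|\leq C|\xi_\ell|^{-\ell}$, which avoids the cutoff machinery entirely; the two-sided bound on $|\widehat{u}|$ indeed follows from $(\mathscr{B}_4)$, the inequality $\int_0^t\Im\lambda_k\leq 2\pi\Im\lambda_{0,k}+2\pi\theta_k$ and $\Im\lambda_{0,k}(\xi_\ell)\to 0$ (and $\widehat{u}(0,\xi_\ell)=1$ gives the lower bound for free). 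To close this part you should record that all $t$-derivatives of $\widehat{u}(\cdot,\xi_\ell)$ are polynomially bounded, via \eqref{poly-growth}, since Proposition \ref{prop-smooth}(b) requires derivative estimates both for $u\in\mathcal{D}'(\mathbb{T}^{n+1})$ and for the smoothness of $h$; the zero-order bound alone does not suffice for either membership.
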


\begin{remark}
	It is important to  emphasize that  the proof of this result is a slight modification of the proofs of Theorem 3.5 in \cite{AGKM} and Theorem 3.3 in \cite{Avila}.
\end{remark}

\begin{proof}
	Assume that $\mathscr{L}_{0,k}$ is globally hypoelliptic and consider $\omega$ and $h$  satisfying equations \eqref{Lk_equation-h}. It follows from Theorem \ref{AGKM}, part (a), that  $Z_{k}$ is finite. Hence,  the solutions of \eqref{Lk_equation-h}  can be written in the form
	\begin{equation*}
		\widehat{\omega}(t, \xi) = \frac{1}{e^{ 2 \pi i\lambda_{0,k}(\xi)} - 1} \int_{0}^{2\pi}\exp\left(i\int_{t}^{t+s}\!\!\lambda_k(r, \, \xi)\, dr\right) \widehat{h}(t+s, \xi)ds,
	\end{equation*}
	for $|\xi|$ large enough.

	Now, let   $\alpha$ be a non-negative integer and consider $N_1>0$. By applying the Leibniz formula and estimates \eqref{part-smooth-coef}, \eqref{poly-growth} and  \eqref{lambda_smooth_est} we obtain  positive constants  $C_1$ and $R_1$ such that
	\begin{equation*}
		|\partial_t^{\alpha}  \omega(t, \xi)|  \leq C_1|\xi|^{- N_1 + \theta_k}
		\int_{0}^{2\pi}\exp\left(-\int_{t}^{t+s}\!\!\Im {\lambda_k(r, \, \xi)} \, dr\right) ds, \ 
		|\xi|\geq R_1.
	\end{equation*}
	
	The exponential term in the last integral is bounded in view of condition ($\mathscr{B}_4$), thus  $\omega$ is a smooth function on $\mathbb{T}^{n+1}$.

	For the converse, admit that $\mathscr{L}_{0,k}$ is not globally hypoelliptic. Thus, in view of \eqref{lambda_smooth_est}, there is a sequence $\{\xi_{\ell}\}_{\ell \in \mathbb{N}}$ such that $|\xi_{\ell}|$ is strictly increasing, $|\xi_{\ell}|>\ell$ and \begin{equation*}
		|1-e^{-2\pi i\lambda_{0,k}(\xi_{\ell})}|<|\xi_{\ell}|^{-\ell}, \ \textrm{for all} \ \ell \in\mathbb{N}.
	\end{equation*}
	
	If $Z_{k}$ is infinite, then $\mathscr{L}_{k}$ is not globally hypoelliptic in view of Proposition \ref{Z_k-is-finite}, thus we can proceed by assuming that
	$Z_{k}$ is finite and, in particular, we admit $\xi_\ell\not\in Z_{k}$, for all $\ell \in \mathbb{N}$. 
	
	For each $\ell,$ we may choose $t_\ell\in[0,2\pi]$ so that $\int_{t_\ell}^{t}\Im\lambda_k(r,\xi_\ell)dr\leqslant 0,$ for all $t\in[0,2\pi].$
	Indeed, for all $t\in[0,2\pi]$ we can write
	$$
	\int_{t_\ell}^{t}\Im\lambda_k(r,\xi_\ell)dr=\int_{0}^{t}\Im\lambda_k(r,\xi_\ell)dr-\int_{0}^{t_\ell}\Im\lambda_k(r,\xi_\ell)dr
	$$
	and we take $t_\ell$ satisfying $$
	\int_{0}^{t_\ell}\Im\lambda_k(r,\xi_\ell)dr=\max_{t\in[0,2\pi]}\int_{0}^{t}\Im\lambda_k(r,\xi_\ell)dr.
	$$
	
	Passing to a subsequence, if necessary, we can assume that $t_\ell\rightarrow t_0,$ for some $t_{0}\in[0,2\pi]$. Consider $I$ to be a closed interval in $(0,2\pi)$ such that $t_0\not\in I$ and let $\phi$ be a real-valued function compactly supported in $I$,  such that $0\leqslant \phi(t)\leqslant 1$ and
	$\int_{0}^{2\pi}\phi(t)dt>0.$

	For each $\ell,$ let $\widehat{f}(\cdot,\xi_\ell)$ be a $2\pi-$periodic extension of
	$$
	(1-e^{-2\pi  i\lambda_{0,k}(\xi_\ell)})\exp\left(-\int_{t_\ell}^{t}i\lambda_k(r,\xi_\ell)dr\right)\phi(t).
	$$

	Since $\lambda_{0,k}(\xi)$ is bounded by $|\xi|^{\nu_k}$ and $\int_{t_n}^{t}\Im\lambda_k(r,\xi_n)dr\leqslant 0$, for all $t\in[0,2\pi],$ 
	we obtain a function $f \in C^{\infty} (\mathbb{T}^{n+1})$ defined by
	$$
	f(t,x) = 
	\left\{
	\begin{array}{l}
		\sum_{n=1}^{\infty}\widehat{f}(t,\xi_n)e^{ix\xi_n}, \ \xi = \xi_{\ell}, \\
		0, \  \xi \neq \xi_{\ell}.
	\end{array}
	\right.	
	$$
	
	On the other hand, straightforward calculations show  that $u = \sum_{\xi \in \ \mathbb{Z}^n} \widehat{u}(t,\xi) e^{i x \cdot \xi}$
	belongs to $\mathcal{D}'(\mathbb{T}^{n+1})\setminus C^{\infty}(\mathbb{T}^{n+1})$, where 
	$$
	\widehat{u}(t,\xi_\ell)=\frac{1}{ 1-e^{-2\pi i\lambda_{0,k}(\xi_\ell)}}\int_{0}^{2\pi}\exp\left(-\int_{t-s}^{t}i\lambda_k(r,\xi_\ell)dr\right)\widehat{f}(t-s,\xi_\ell)ds,
	$$
	for $\xi = \xi_{\ell}$ and $\widehat{u}(t,\xi_\ell) = 0$, for $\xi \neq \xi_{\ell}$. 
	
	Since  $\mathscr{L}_k^{\xi}\widehat{u}(t,\xi)=-i\widehat{f}(t,\xi)$, for every $\xi \in \mathbb{Z}^n$, we obtain that  $\mathscr{L}_k$ is not globally hypoelliptic.

\end{proof}

\begin{remark}
	It is important to  emphasize that condition ($\mathscr{B}_4$) can be replaced by 
	\begin{equation*}
		\Im \lambda_k(t, \xi)  \leq \theta_k, \ t \in \mathbb{T}, \  \forall \xi \in \mathbb{Z}^n,
	\end{equation*}
	since  the solutions of \eqref{Lk_equation-h} have the equivalent form
	\begin{equation*}
		\omega(t, \xi) = \frac{1}{1 - e^{-  2 \pi i\lambda_{0,k}(\xi) }} \int_{0}^{2\pi}\exp\left(-i\int_{t-s}^{t}\!\! \lambda_k(r, \, \xi)\, dr\right) h(t-s, \xi)ds.
	\end{equation*}
	
	Hence, we get the boundedness 
	\begin{align*}
		\sup_{s \in[0, 2\pi]}\left|\exp\left(-i\int_{t-s}^{t}\!\!\lambda_k(r, \, \xi) \, dr\right) \right| & =  
		\sup_{s \in[0, 2\pi]}\left|\exp\left(\int_{t-s}^{t}\!\!{\Im \lambda_k}(r, \, \xi) \, dr\right) \right|  \\
		& \leq  e^{2\pi \theta_k}.
	\end{align*}	
\end{remark}

At this point it is possible to exhibit the main result.

\begin{theorem}\label{The-Nece-suff-GH}
	Suppose that  $Q(t,\xi)$  is a strongly triangularizable symbol with diagonal bounded from below.  Then, system $P = D_t + Q(t,D_x)$ is globally hypoelliptic if and only if the diagonal system
	$$
	\mathscr{L}_{0} = diag(\mathscr{L}_{0,1}, \ldots, \mathscr{L}_{0,m})
	$$
	is globally hypoellitpic.
\end{theorem}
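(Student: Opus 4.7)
The plan is to chain together the equivalences already available and reduce the argument to the triangular operator. By Theorem \ref{rediction-theorem}, the operator $P$ is globally hypoelliptic if and only if the reduced operator
\[
T = D_t + \Lambda(t,D_x) + \mathcal{N}(t,D_x)
\]
is. Since $\mathscr{L}_0$ is diagonal, $\mathscr{L}_0$ is globally hypoelliptic iff each $\mathscr{L}_{0,k}$ is; and condition $(\mathscr{B}_4)$ together with Theorem \ref{tec-theorem} shows that $\mathscr{L}_{0,k}$ is globally hypoelliptic iff the family $\mathscr{L}_k=\{D_t+\lambda_k(t,\xi)\}_{\xi\in\mathbb{Z}^n}$ is. So the problem becomes to show that $T$ is globally hypoelliptic if and only if every family $\mathscr{L}_k$ is.

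For sufficiency, I assume each $\mathscr{L}_k$ is globally hypoelliptic and take $v\in\mathcal{D}'_m(\mathbb{T}^{n+1})$ with $Tv=g\in C^\infty_m(\mathbb{T}^{n+1})$. Reading the triangular system \eqref{general-traing-sys} from the bottom upward, row $m$ gives $\mathscr{L}_m v_m = g_m$, whence $v_m\in C^\infty$; row $m-1$ gives $\mathscr{L}_{m-1}v_{m-1}=g_{m-1}-r_{m-1,m}(t,D_x)v_m$, whose right-hand side is smooth because each $r_{j,k}(t,D_x)$ preserves $C^\infty$ (cf.\ \eqref{Fourier_Operator}), so $v_{m-1}\in C^\infty$; an obvious downward induction yields $v\in C^\infty_m$.

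For necessity I argue by contrapositive: suppose some $\mathscr{L}_k$ fails to be globally hypoelliptic and let $k_0$ be the smallest such index. I will construct $v\in\mathcal{D}'_m\setminus C^\infty_m$ with $Tv\in C^\infty_m$. The proof of Theorem \ref{tec-theorem} produces $\omega\in\mathcal{D}'(\mathbb{T}^{n+1})\setminus C^\infty$ and $h\in C^\infty(\mathbb{T}^{n+1})$ with $\mathscr{L}_{k_0}\omega=h$ whose Fourier coefficient $\widehat{\omega}(t,\xi)$ is supported on a sparse sequence $\{\xi_\ell\}$. I set $v_{k_0}=\omega$, $v_j=0$ for $j>k_0$, and define $v_{k_0-1},\dots,v_1$ iteratively by solving, for each $\xi\in\{\xi_\ell\}$, the circle ODE
\[
(D_t+\lambda_j(t,\xi))\widehat{v}_j(t,\xi)=-\sum_{j<k\le k_0} r_{j,k}(t,\xi)\widehat{v}_k(t,\xi),
\]
while taking $\widehat{v}_j(t,\xi)=0$ off of $\{\xi_\ell\}$. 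By minimality of $k_0$ each $\mathscr{L}_{0,j}$ with $j<k_0$ is globally hypoelliptic, hence $Z_j$ is finite and $|1-e^{-2\pi i\lambda_{0,j}(\xi_\ell)}|^{-1}$ has polynomial growth; combined with $(\mathscr{B}_4)$ and the polynomial bounds \eqref{poly-growth}, \eqref{poly-growth-nilpotent}, this gives polynomial-in-$\ell$ estimates for $\widehat{v}_j(\cdot,\xi_\ell)$, so each $v_j\in\mathcal{D}'(\mathbb{T}^{n+1})$. By construction the coordinates of $Tv$ are either zero or equal to $h$, so $Tv\in C^\infty_m$, while $v_{k_0}=\omega\notin C^\infty$ shows $v\notin C^\infty_m$, contradicting GH of $T$.

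The main obstacle is precisely this iterative construction in the necessity direction: one has to verify that the family of circle ODEs for $\widehat{v}_j(t,\xi_\ell)$ is uniformly solvable in $\ell$ with at most polynomial-in-$\ell$ growth. This rests on carefully combining the polynomial lower bound $|1-e^{-2\pi i\lambda_{0,j}(\xi_\ell)}|\ge C|\xi_\ell|^{-M}$ (coming from GH of $\mathscr{L}_{0,j}$ for $j<k_0$) with the uniform exponential control afforded by $(\mathscr{B}_4)$ and the symbol bounds on the entries $r_{j,k}$ and on $\widehat{\omega}$.
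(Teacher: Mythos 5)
Your proposal is correct and follows essentially the same route as the paper: reduce to the triangular system via Theorem \ref{rediction-theorem}, pass between $\mathscr{L}_{0,k}$ and the families $\mathscr{L}_k$ via Theorem \ref{tec-theorem} (using $(\mathscr{B}_4)$), prove sufficiency by the bottom-up induction on the rows, and prove necessity by taking the smallest bad index and building the lower components through the explicit periodic solutions of the circle ODEs, with growth controlled by the lower bound on $|1-e^{2\pi i\lambda_{0,j}(\xi)}|$, $(\mathscr{B}_4)$, and the polynomial bounds on $r_{j,k}$. The iterative construction you flag as the main obstacle is exactly the step the paper carries out (with the same cutoff for the finitely many $\xi$ where $\lambda_{0,j}(\xi)\in\mathbb{Z}$), so no genuinely different ideas are involved.
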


\begin{proof}
	Let us prove the sufficiency part. For this, assume that $\mathscr{L}_{0}$ is globally hypoelliptic and let  $u \in \mathcal{D}'_{m}(\mathbb{T}^{n+1})$ be a solution of $Pu = f \in C_{m}^{\infty}(\mathbb{T}^{n+1})$.  We apply the triangularization process and rewrite the triangular system \eqref{general-traing-sys} in the form 
	\begin{equation}\label{g_tildle-Syst}
		\left\{
		\begin{array}{l}
			D_tv_k(t, \xi) + \lambda_{k}(t,\xi)v_k(t, \xi) = \widetilde{g}_k(t, \xi), \ k=1, \ldots, m-1,\\
			D_tv_m(t, \xi) + \lambda_{m}(t,\xi)v_m(t,\xi) = g_m(t,\xi),
		\end{array}
		\right.
	\end{equation}
	where
	\begin{equation*}
		\widetilde{g}_k(t, \xi) = g_{k}(t,\xi) - \sum_{j=k+1}^{m} r_{j,k} (t,\xi) v_{j}(t,\xi).
	\end{equation*}
	
	The global hypoellipticity of $\mathscr{L}_{0}$ implies that each $\mathscr{L}_{0,j}$ is  globally hypoelliptic and, by Theorem \ref{tec-theorem}, each  family $\mathscr{L}_{j}$ is also globally hypoelliptic.
	
	Given $k \in \{1, \ldots, m \}$, consider the formal series
	$$
	v_k(t,x) = \sum_{\xi \in \mathbb{Z}^n} e^{i x \cdot \xi} v_{k}(t,\xi)          
	\ \textrm{ and } \
	g_k(t,x) = \sum_{\xi \in \mathbb{Z}^n} e^{i x \cdot \xi} g_{k}(t,\xi).
	$$
	
	By the  triangularization process we have  $g_k(t,x) \in C^{\infty}(\mathbb{T}^{n+1})$, for each $k \in \{1, \ldots, m \}$. The last equation in \eqref{g_tildle-Syst} implies that $\mathscr{L}_{m}^{\xi}v_m(t,\xi)=g_{m}(t,\xi)$, for all $\xi \in \mathbb{Z}^n$. Hence, the hypoellipticity  of  $\mathscr{L}_{m}$ ensures that $v_m(t,x)$ is a smooth function on $\mathbb{T}^{n+1}$.

	Now, for $k=m-1$ we obtain
	$$
	\mathscr{L}_{m-1}^{\xi}v_{m-1}(t,\xi) = \widetilde{g}_{m-1}(t,\xi) = g_{m-1}(t,\xi) - r_{j,m-1} (t,\xi) v_{m}(t,\xi), \ \forall \xi \in \mathbb{Z}^n.
	$$
	
	Since $r_{j,\ell}$ satisfies \eqref{poly-growth-nilpotent}, then all derivatives of the term $r_{j,m-1} (t,\xi) v_{m}(t,\xi)$ converge to zero faster than any polynomial, which implies
	$$
	\widetilde{g}_{m-1}(t,x) = \sum_{\xi \in \mathbb{Z}^n} e^{i x \cdot \xi} \, \widetilde{g}_{m-1}(t,\xi) \in C^{\infty}(\mathbb{T}^{n+1}),
	$$
	hence $v_{m-1}(t,x) \in C^{\infty}(\mathbb{T}^{n+1})$, in view of the hypoellipticity  of  $\mathscr{L}_{m-1}$.

	Finally, by successive applications of these arguments,  we obtain that each component $v_k$ defines a smooth function on $\mathbb{T}^{n+1}$. Thus, $v \in C_{m}^{\infty}(\mathbb{T}^{n+1})$ and  $P$ is globally hypoelliptic.

	To prove the necessary part,  we admit that  $\mathscr{L}_{0}$ is not globally hypoelliptic. This assumption implies that at least   one of the operators $\mathscr{L}_{0,k}$ is also not globally hypoelliptic, and, consequently, neither is the family $\mathscr{L}_{k}$.

	Assume, for a moment, that $k=1$. Hence, there exists $v_1 \in \mathcal{D}'(\mathbb{T}^{n+1}) \setminus C^{\infty}(\mathbb{T}^{n+1})$  and  $g_1 \in C^{\infty}(\mathbb{T}^{n+1})$ such that
	$\mathscr{L}_{1}^{\xi} \widehat{v}_1(t,\xi) = \widehat{g}_1(t,\xi)$, for all $\xi \in \mathbb{Z}^n$.  Thus, the vector
	$
	\widehat{v}(t,\xi) = (\widehat{v}_1(t,\xi), 0, \ldots, 0)
	$
	satisfies the equation 
	$$
	(D_t + \Lambda(t,\xi) + \mathcal{N}(t,\xi)) \widehat{v}(t,\xi) = (\widehat{g}_1(t,\xi), 0, \ldots, 0),
	$$
	which implies $P$ not globally hypoelliptic.

	Now, let us assume that for some $k<m$ we have $\mathscr{L}_{1}$, $\mathscr{L}_{2}$,  $\ldots$, $\mathscr{L}_{k-1}$ globally hypoelliptic and $\mathscr{L}_{k}$  not globally hypoelliptic. Consider  $v_{k}$, $g_k$ playing the roles of $v_1$ and $g_1$ in the previous argument.

	Since $\mathscr{L}_{0,k-1}$ is globally hypoelliptic there exits $R>0$ such that
	$e^{ 2 \pi i\lambda_{0,k-1}(\xi)} \neq 1$, for $|\xi|\geq R$. Thus, we can define 
	\begin{equation*}
		\widehat{v}_{k-1}(t,\xi) = \frac{1}{e^{ 2 \pi i\lambda_{0,k-1}(\xi)} - 1} \int_{0}^{2\pi}\exp\left(i\int_{t}^{t+s}\!\!\lambda_{k-1}(r, \, \xi)\, dr\right) h_{k-1}(t+s, \xi)ds,
	\end{equation*}
	for $|\xi|\geq R$ and  $\widehat{v}_{k-1}(t,\xi) = 0$ for $|\xi|< R$, where 
	$$
	h_{k-1}(t,\xi) = -i \, r_{k-1,k}(t,\xi) \widehat{v}_k(t,\xi), \ |\xi|\geq R.
	$$
	
	In view of the hypoellipticity of $\mathscr{L}_{0,k-1}$ and the hypothesis on $v_k$, we can conclude that 
	$$
	v_{k-1}(t,x) = \sum_{\xi \in \mathbb{Z}^n} e^{i x \cdot \xi} \, \widehat{v}_{k-1}(t,\xi)
	\in \mathcal{D}'(\mathbb{T}^{n+1}).
	$$
	In particular, we have
	\begin{equation*}
		\mathscr{L}_{0,k-1}^{\xi} \widehat{v}_{k-1}(t,\xi)  =
		\left\{
		\begin{array}{l}
			- r_{k-1,k}(t,\xi) \widehat{v}_k(t,\xi), \ |\xi|\geq R, \\
			0, \ |\xi|< R,
		\end{array}
		\right.
	\end{equation*}
	and, by defining 
	$$
	\widehat{g}_{k-1}(t,\xi) = \mathscr{L}_{0,k-1}^{\xi} \widehat{v}_{k-1}(t,\xi) + 
	r_{k-1,k}(t,\xi) \widehat{v}_k(t,\xi),
	$$
	we get 
	$$
	g_{k-1}(t,x) = \sum_{\xi \in \mathbb{Z}^n} e^{i x \cdot \xi} \, \widehat{g}_{k-1}(t,\xi)
	\in C^{\infty}(\mathbb{T}^{n+1}),
	$$
	since $\widehat{g}_{k-1}(t,\xi) = 0$, for $|\xi| \geq R$.
	
	By  repeating  these arguments it is possible to construct $k-1$ distributions
	$v_1, \ldots, v_{k-1}$ and $k-1$ smooth functions $g_1, \ldots, g_{k-1}$ such that the vector
	$$
	\widehat{v}(t,\xi) = (\widehat{v}_1(t,\xi), \ldots, \widehat{v}_{k-1}(t,\xi), \widehat{v}_k(t,\xi), 0, \ldots, 0)
	$$
	satisfies 
	$$
	(D_t + \Lambda(t,\xi) + \mathcal{N}(t,\xi)) \widehat{v}(t,\xi) = (\widehat{g}_1(t,\xi), \ldots, \widehat{g}_{k-1}(t,\xi),  \widehat{g}_k(t,\xi), 0, \ldots, 0),
	$$
	which implies that $P$ is not globally hypoelliptic.
	
	The case $k=m$ is similar since it is enough to consider the vectors
	$$
	\widehat{v}(t,\xi) = (\widehat{v}_1, \ldots, \widehat{v}_{k-1}, \widehat{v}_m)
	\ \textrm{ and } \
	\widehat{g}(t,\xi)=(\widehat{g}_1, \ldots, \widehat{g}_{k-1},  \widehat{g}_m),
	$$
	for which
	$
	(D_t + \Lambda(t,\xi) + \mathcal{N}(t,\xi)) \widehat{v}(t,\xi) = \widehat{g}(t,\xi).
	$
	Thus, the proof is completed.

\end{proof}

\begin{example}
	
	Let $P = D_t + Q(t,D_x)$ be a system such that $Q(t,D_x) = c(t) [Q_{j,k}(D_x)]$.   
	In this case  $Q(t,\xi) = c(t)Q(\xi)$  is strongly triangularizable, since we can apply  Schur's triangularization for $Q(\xi)$. Hence,  $Pu = f$ is  equivalent to
	$$
	D_t v(t, \xi) + c(t)(\Lambda(\xi) + \mathcal{N}(\xi)) v(t, \xi) = g(t,\xi).
	$$

	The eigenvalues of this system are $\lambda_k(t, \xi) = c(t) \lambda_{k}(\xi)$ with averages $\lambda_{0,k}(\xi) = c_0 \lambda_{k}(\xi)$, where 
	$c_0 = (2\pi)^{-1}\int_{0}^{2\pi}c(t)dt.$ Then, we have 
	$$
	\mathscr{L}_k^{\xi} = D_t +  c(t) \lambda_k(\xi), \ \textrm{ and } \ \mathscr{L}_{0,k} = D_t +  c_0\lambda_k(D_x).
	$$

	Furthermore, if ($\mathscr{B}_4$) is fulfilled,  then $P$ is globally hypoelliptic if and only if there exist positive constants $C_k$, $M_k$ and $R_k$ such that
	$$
	|\tau+c_0 \lambda_{k}(\xi)|\geq C_k(|\tau| + |\xi|)^{-M_k}, \ \textrm{for all} \ |\tau| + |\xi| \geq R_k,
	$$
	for $k = 1, \ldots, m.$
\end{example}

\begin{example}
	Let $P = D_t + Q(t, D_x)$ be as defined in Example \eqref{exemple_ST} and assume that $a_0 = 0$. Thus, we  have 
	\begin{equation*}
		\mathscr{L}_1^{\xi} = D_t + a(t) + b(t)\xi \ \textrm{ and } \ 
		\mathscr{L}_2^{\xi} = D_t + a(t) - b(t)\xi.
	\end{equation*}
	
	Hence, system $P$ is globally hypoelliptic if and only if there exist constants satisfying
	\begin{equation*}
		|\tau \pm b_0 \xi|\geq C(|\tau| + |\xi|)^{-M}, \ \textrm{for all} \ |\tau| + |\xi| \geq R,
	\end{equation*}
	or equivalent, if  $b_0$ is  an irrational non-Liouville number.

\end{example}

\begin{example}
	
	Let $\{A_j\}_{j=1}^{n}$ be a family of $m\times m$ commuting matrices with eigenvalues $\{\sigma_{j,\ell}\}_{\ell=1}^{m}$. Given  $n$ pseudo-differential operators
	$P_j(D_{x_j})$, each one defined on $\mathbb{T}_{x_j}$, we can set the following system 
	\begin{equation*}
		P = D_t + \sum_{j=1}^{n} c_j(t) A_j P_j(D_{x_j}), \ (t,x) \in \mathbb{T}^{n+1}.
	\end{equation*}
	
	In view of the commutative assumption, we may apply the simultaneous triangularization to $\{A_j\}_{j=1}^{n}$. Then, the corresponding system  \eqref{general-system} can be rewritten as 
	$$
	D_tv(t,\xi) + \mathcal{A}(t, \xi)v(t,\xi) = g(t,\xi),
	$$
	where
	$$
	\mathcal{A}(t, \xi) =
	\left[
	\begin{array}{ccccc}
		\lambda_{1}(t, \xi) & \rho_{1,2}(t,\xi)  & \ldots & \rho_{1,m}(t,\xi) \\
		0 & \lambda_{2}(t, \xi)  & \ldots & \rho_{2,m}(t,\xi) \\
		\vdots & \vdots  & \vdots & \vdots \\
		0 & 0 & \ldots  & \lambda_{m}(t, \xi) 
	\end{array}
	\right] ,
	$$
	with
	$$
	\lambda_{\ell}(t, \xi) = \sum_{j=1}^{n}c_j(t)\sigma_{j,\ell} \, p_j(\xi_j), 
	\ \textrm{ and } \
	\rho_{\ell , k}(t,\xi) = \sum_{j=1}^{n}c_j(t)r_{\ell , k}^j p_j(\xi_j).
	$$
	
	In this case we have the operators 
	$$
	\mathscr{L}_{\ell} = D_t + \sum_{j=1}^{n}c_{j}(t) \sigma_{j,\ell} \, P_j(D_{x_j})
	\ \textrm{ and } \
	\mathscr{L}_{0,\ell} = D_t + \sum_{j=1}^{n}c_{0,j} \sigma_{j,\ell} \, P_j(D_{x_j}).
	$$
	Thus, if  $\lambda_{\ell}(t, \xi)$ satisfies ($\mathscr{B}_4$), then $P$ is globally hypoelliptic if and only if there are positive constants $C_\ell$, $M_\ell$ and $R_\ell$ such that
	$$
	\left|\tau+ \sum_{j=1}^{n}c_{0,j}\sigma_{j,\ell} \, p_j(\xi_j) \right|\geq C_\ell(|\tau| + |\xi|)^{-M_\ell}, \ \textrm{for all} \ |\tau| + |\xi| \geq R_\ell,
	$$
	for $\ell = 1, \ldots, m$.

\end{example}

\section{Reduction to the triangular form} \label{sec-4}

The aim of this Section is to present sufficient conditions on a matrix symbol $Q(t,\xi)$ such that it can be triangularized in the sense  of definition \eqref{def-stro-tri}.
The triangularization process shown in the sequel follow the same ideas used in \cite{Garetto2018}, hence
it is possible to omit some steps in this presentation. In view of this, attention will be focused on 
the verification that, under suitable hypotheses,  the triangularization
$$
S^{-1}(t, \xi) Q(t,\xi) S(t, \xi) =  \Lambda(t,\xi) + \mathcal{N}(t, \xi)
$$
is smooth, that is, the matrices $S$, $S^{-1}$ and $\mathcal{N}$ have entries belonging to $C^{\infty}(\mathbb{T} \times \mathbb{Z}^n)$.

Furthermore,  Theorem \ref{smooth-t-sym} exhibits  conditions such that 
$(\mathscr{B}_2)$ and $(\mathscr{B}_3)$ are fulfilled.

\begin{theorem}\label{schur-smooth-trian}
	Let $\mathcal{A}(t,\xi) = [a_{j,k}(t,\xi)]$ be a $m\times m$  matrix with 
	$a_{j,k}(t,\xi) \in C^{\infty}(\mathbb{T} \times \mathbb{Z}^n)$. Admit that its eigenvalues 	$\lambda_{1}(t,\xi),  \ldots, \lambda_{m}(t,\xi)$ and corresponding eigenvectors $h_{1}(t,\xi),  \ldots, h_{m-1}(t,\xi)$ are also smooth and that 
	$$
	\langle h^{(i)}(t, \xi) , e_1 \rangle \neq 0, \ \forall (t, \xi) \in \mathbb{T} \times \mathbb{Z}^n,
	$$
	holds for all $i = 1, \ldots, m-1$, with the notation to be explained, and $e_1=(1,0, \ldots,0)\in \mathbb{R}^{m-i+1}$.
	
	Then, there exist smooth matrices $S(t, \xi)$, invertible for all $(t,\xi) \in \mathbb{T} \times \mathbb{Z}^n$, with smooth inverses $S^{-1}(t, \xi)$, such that 
	$$
	S^{-1}(t, \xi) \mathcal{A}(t, \xi) S(t, \xi)  = \Lambda(t,\xi) + \mathcal{N}(t,\xi), 
	$$
	for all $(t, \xi) \in \mathbb{T} \times \mathbb{Z}^n$, where  $\Lambda(t, \xi) = diag (\lambda_{1}(t,\xi), \ldots, \lambda_{m}(t,\xi))$ and $\mathcal{N}(t,\xi)$ is an upper triangular nilpotent matrix with entries $r_{j,k}(t,\xi) \in C^{\infty}(\mathbb{T} \times \mathbb{Z}^n)$.
	
\end{theorem}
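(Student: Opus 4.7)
My plan is to perform an iterative Schur-type triangularization along the lines of the construction in \cite{Garetto2018}, producing at each step a smooth invertible base change whose entries are rational expressions in the components of the current eigenvector, with denominators controlled precisely by the hypothesis $\langle h^{(i)}(t,\xi), e_1\rangle \neq 0$.

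First, I set $\mathcal{A}^{(1)}(t,\xi) = \mathcal{A}(t,\xi)$ and $h^{(1)}(t,\xi) = h_1(t,\xi)$, writing $h^{(1)} = (h^{(1)}_1, \ldots, h^{(1)}_m)$. Since $h^{(1)}_1 = \langle h^{(1)}, e_1\rangle$ does not vanish on $\mathbb{T} \times \mathbb{Z}^n$, the matrix
$$
T_1(t, \xi) =
\begin{pmatrix}
h^{(1)}_1 & 0 & \cdots & 0 \\
h^{(1)}_2 & 1 & \cdots & 0 \\
\vdots & \vdots & \ddots & \vdots \\
h^{(1)}_m & 0 & \cdots & 1
\end{pmatrix}
$$
is invertible with $\det T_1 = h^{(1)}_1$. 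Its inverse is given explicitly by Cramer's rule, and its entries are smooth functions of $(t,\xi)$ since $h^{(1)}_1$ never vanishes. Because $T_1 e_1 = h^{(1)}$ and $\mathcal{A} h^{(1)} = \lambda_1 h^{(1)}$, we obtain $T_1^{-1} \mathcal{A} T_1 e_1 = \lambda_1 e_1$, so
$$
T_1^{-1}(t,\xi)\, \mathcal{A}(t,\xi) \, T_1(t,\xi) = \begin{pmatrix} \lambda_1(t,\xi) & \ast(t,\xi) \\ 0 & \mathcal{A}^{(2)}(t,\xi) \end{pmatrix},
$$
with $\mathcal{A}^{(2)}$ of size $(m-1) \times (m-1)$ having smooth entries (being a polynomial combination of entries of $\mathcal{A}$, $T_1$, $T_1^{-1}$). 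By similarity, the eigenvalues of $\mathcal{A}^{(2)}$ are precisely $\lambda_2, \ldots, \lambda_m$, and a smooth eigenvector $h^{(2)}(t,\xi) \in \mathbb{C}^{m-1}$ corresponding to $\lambda_2$ is obtained from $h_2$ by the standard deflation procedure (namely, removing the first coordinate after subtracting an appropriate multiple of $h^{(1)}$); the hypothesis $\langle h^{(2)}, e_1 \rangle \neq 0$ is precisely what is needed at this next stage.

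I iterate: at step $i$, I have a smooth $(m-i+1) \times (m-i+1)$ matrix $\mathcal{A}^{(i)}(t,\xi)$ with eigenvalues $\lambda_i, \ldots, \lambda_m$ and a smooth eigenvector $h^{(i)}(t,\xi)$ for $\lambda_i$ with nonvanishing first coordinate. I construct $T_i(t,\xi)$ as above (of size $m-i+1$), and set $\widetilde{T}_i(t,\xi) = \mathrm{diag}(I_{i-1}, T_i(t,\xi))$, which is smooth and invertible on $\mathbb{T} \times \mathbb{Z}^n$. Conjugation by $\widetilde{T}_i$ places $\lambda_i$ on the $i$-th diagonal entry, preserves the triangular form of the already-processed block, and yields a smooth $\mathcal{A}^{(i+1)}$ in the lower-right $(m-i) \times (m-i)$ block. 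After $m-1$ iterations, the resulting matrix is upper triangular with diagonal entries $\lambda_1, \ldots, \lambda_m$, and I define
$$
S(t,\xi) = T_1(t,\xi) \, \widetilde{T}_2(t,\xi) \cdots \widetilde{T}_{m-1}(t,\xi).
$$
Both $S$ and $S^{-1} = \widetilde{T}_{m-1}^{-1} \cdots \widetilde{T}_2^{-1} T_1^{-1}$ are finite products of smooth invertible matrices, hence smooth on $\mathbb{T} \times \mathbb{Z}^n$, and by construction
$$
S^{-1}(t,\xi) \, \mathcal{A}(t,\xi) \, S(t,\xi) = \Lambda(t,\xi) + \mathcal{N}(t,\xi),
$$
with $\mathcal{N}(t,\xi)$ a strictly upper triangular matrix whose entries $r_{j,k}(t,\xi)$ are smooth.

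The main obstacle is not any deep analytic difficulty but rather bookkeeping: one must check at each iteration that the deflated eigenvector $h^{(i+1)}$ is smooth (so that the hypothesis on $\langle h^{(i+1)}, e_1\rangle$ is meaningful) and that the conjugation preserves the established triangular structure of the upper-left blocks. Both properties follow from the explicit rational formulas for $T_i^{-1}$ together with the iterative invariance $\widetilde{T}_i e_j = e_j$ for $j < i$. Since the approach is the same as in \cite{Garetto2018}, I restrict my attention to verifying that every step indeed yields entries in $C^\infty(\mathbb{T} \times \mathbb{Z}^n)$, which is where the non-vanishing hypothesis on the first coordinates of $h^{(i)}$ is used in an essential way.
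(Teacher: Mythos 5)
Your proposal is correct and follows essentially the same route as the paper: a one-step deflation (the paper's Proposition \ref{schur-step}) in which the first column of the base-change matrix is the eigenvector — the paper normalizes it so the first entry is $1$, giving an explicit inverse, while you keep it unnormalized and invert via Cramer's rule using $\langle h^{(i)},e_1\rangle\neq 0$ — followed by the same iteration with block matrices $\mathrm{diag}(I_{i-1},T_i)$, the same definition of the deflated eigenvectors $h^{(i)}$ via the accumulated inverse conjugations and projection, and the same final product $S=T_1\widetilde{T}_2\cdots\widetilde{T}_{m-1}$. No gaps; the differences are purely cosmetic.
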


\begin{remark}
	The proof of this theorem is given as follows:  by Proposition \ref{schur-step} below,
	it is possible to start   by working with a  pair eigenvalue-eigenvector, say $\lambda-h$, and exhibit a decomposition 
	$$
	S^{-1} \mathcal{A} S =
	\left[
	\begin{array}{cc}
		\lambda & * \\
		0      & \mathcal{E}
	\end{array}
	\right], 
	$$
	for some matrix $\mathcal{E}$, of order $m-1$. Thus, the  procedure consists in  successive reapplications 
	of this process to  the matrix $\mathcal{E}$. 
\end{remark}

\begin{proposition}\label{schur-step}
	Let $\lambda(t,\xi)$ be an eigenvalue of $\mathcal{A}(t,\xi)$ and $h(t,\xi)$ an associated eigenvector with both of them smooth on $\mathbb{T} \times \mathbb{Z}^n$. Suppose that  there exists $j \in \{1, \ldots, m\}$ with
	\begin{equation}\label{hypo-eigenvectors}
		\langle h(t,\xi) , e_j \rangle_{\mathbb{C}^{m}} \neq 0, \ \forall (t,\xi) \in \mathbb{T} \times \mathbb{Z}^n,
	\end{equation}
	where  $e_j$ denotes the j-th element of the standard basis of $\mathbb{R}^m$.
	
	Then, there are a smooth  matrices $\mathcal{E}(t, \xi)$, of order $m-1$, and  $S(t, \xi)$ of order $m$, invertible for all $(t,\xi) \in \mathbb{T} \times \mathbb{Z}^n$, with a smooth  inverse $S^{-1}(t, \xi)$, such that 
	\begin{equation}\label{tring-step1}
		S^{-1}(t, \xi) \mathcal{A}(t,\xi) S(t, \xi) =
		\left[
		\begin{array}{ccccc}
			\lambda(t,\xi) & a_{1,2}(t,\xi)   & \ldots     &  a_{1,m}(t, \xi) \\
			0              &                     &                       &   \\
			\vdots         &                     & \mathcal{E}(t, \xi)   & \\
			0              &                     &                       &  
		\end{array}
		\right].
	\end{equation}

\end{proposition}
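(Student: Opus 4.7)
The plan is the classical one--step Schur reduction: force the first column of $S(t,\xi)$ to be the eigenvector $h(t,\xi)$, so that
\[
S^{-1}(t,\xi)\,\mathcal{A}(t,\xi)\,S(t,\xi)\, e_1 \;=\; S^{-1}(t,\xi)\,\lambda(t,\xi)\,h(t,\xi) \;=\; \lambda(t,\xi)\, e_1,
\]
which yields the zero pattern in the first column of \eqref{tring-step1}. What remains is to complete $h$ to a basis of $\mathbb{C}^m$ in a way that keeps $S$ invertible at every $(t,\xi)\in\mathbb{T}\times\mathbb{Z}^n$ and preserves smoothness of both $S$ and $S^{-1}$. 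Hypothesis \eqref{hypo-eigenvectors} is precisely what permits a \emph{constant} completion to do the job globally.

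Concretely, I would define
\[
S(t,\xi) \;=\; \bigl[\, h(t,\xi) \,\big|\, e_1 \,\big|\, \cdots \,\big|\, \widehat{e_j} \,\big|\, \cdots \,\big|\, e_m \,\bigr],
\]
where the hat denotes omission of $e_j$. Smoothness in $(t,\xi)$ is inherited from $h$. Invertibility follows by expanding $\det S(t,\xi)$ along its $j$-th row: none of the constant columns $e_i$ with $i\neq j$ contributes to that row, so $\det S(t,\xi) = \pm \langle h(t,\xi), e_j\rangle_{\mathbb{C}^m}$, which is nonzero everywhere by assumption. Smoothness of $S^{-1}$ is then automatic from the adjugate formula $S^{-1} = (\det S)^{-1}\operatorname{adj}(S)$, because the numerator is polynomial in the smooth entries of $S$ and the denominator is smooth and nonvanishing on $\mathbb{T}\times\mathbb{Z}^n$.

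With $S$ in hand, I would compute $\mathcal{A}(t,\xi)\,S(t,\xi)$ column by column and then left--multiply by $S^{-1}$. The first column becomes $\lambda(t,\xi)e_1$ by the calculation above, and the remaining $m-1$ columns, whose top entries supply the coefficients $a_{1,k}(t,\xi)$ and whose lower $(m-1)$ rows define the block $\mathcal{E}(t,\xi)$, are all smooth in $(t,\xi)$ as entries of a product of smooth matrix--valued functions. There is no serious obstacle in the argument; the only point worth stressing is that \eqref{hypo-eigenvectors} is a pointwise nonvanishing condition over \emph{all} of $\mathbb{T}\times\mathbb{Z}^n$, which is what lets a single constant completion work simultaneously at every point, avoiding any partition--of--unity construction on $\mathbb{T}$.
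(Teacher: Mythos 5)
Your proposal is correct and is essentially the paper's own argument: place (a multiple of) the eigenvector in the first column of $S$, complete it with constant standard basis vectors, and use \eqref{hypo-eigenvectors} to get pointwise invertibility together with smoothness of $S^{-1}$, so that $S^{-1}\mathcal{A}S\,e_1=\lambda e_1$ yields the block form \eqref{tring-step1}. The only (cosmetic) difference is that the paper first reduces to $j=1$ and normalizes, taking the first column to be $h/\langle h,e_1\rangle$ and completing with $e_2,\ldots,e_m$, which gives an explicit unipotent-type inverse $S^{-1}$ and leaves the first row of $\mathcal{A}$ literally equal to $(\lambda,a_{1,2},\ldots,a_{1,m})$, whereas your unnormalized completion (omitting $e_j$) needs the adjugate formula for $S^{-1}$ and produces smooth top-row entries that need not coincide with the original $a_{1,k}(t,\xi)$ — immaterial for how the proposition is used afterwards.
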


\begin{proof}
	Without loss of generality (see Remark 5 in \cite{Garetto2018}) we can admit $j=1$ in \eqref{hypo-eigenvectors}. Define
	$$
	v_i(t,\xi) = \dfrac{\langle h(t,\xi) , e_i \rangle_{\mathbb{C}^{m}}}{\langle h(t,\xi) , e_1 \rangle_{\mathbb{C}^{m}} } \in C^{\infty}(\mathbb{T} \times \mathbb{Z}^n), \ i= 1, \ldots, m,
	$$
	and the matrices
	$$
	S(t,\xi) = \left[
	\begin{array}{ccccc}
		v_1 & 0     & \ldots                &  0 \\
		v_2 &       &                       &   \\
		\vdots     &       & I_{m-1}   & \\
		v_m &       &                       &  
	\end{array}
	\right] 
	\ \textrm{ and } \
	S^{-1}(t,\xi) =	\left[
	\begin{array}{ccccc}
		v_1 & 0     & \ldots                &  0 \\
		-v_2&       &                       &    \\
		\vdots     &       & I_{m-1}               &    \\
		-	v_m&       &                       &  
	\end{array}
	\right], 
	$$
	where $I_{m-1}$ is the identity matrix of order $m-1$.
	
	With these constructions, it can be verified that \eqref{tring-step1} is fulfilled. Once each coordinate $v_i(t,\xi)$ is smooth, we obtain that  $S$, $S^{-1}$ and $\mathcal{E}$ are also  smooth on $\mathbb{T} \times \mathbb{Z}^n$.
	
\end{proof}

\subsection*{Proof of Theorem \ref{schur-smooth-trian}}

As observed, the full triangularization is given by applying  
Proposition  \ref{schur-step} to $\mathcal{E}(t, \xi)$ for $m-2$ times.   To present a sketch of this process, let $\mathcal{A}(t,\xi)$ be as in Theorem \ref{schur-smooth-trian} and $h_{1}(t,\xi),  \ldots, h_{m-1}(t,\xi)$ be the associated eigenvectors to the eigenvalues $\lambda_{1}(t,\xi),  \ldots, \lambda_{m-1}(t,\xi)$.

The construction of vectors $h^{(i)}(t, \xi)$  starts by setting $h^{(1)} \doteq h_1$ and, as in Proposition \ref{schur-step}, by assuming
$$
\langle h^{(1)}(t, \xi) , e_1 \rangle \neq 0, \ \forall (t, \xi) \in \mathbb{T} \times \mathbb{Z}^n.
$$

Hence, we obtain
\begin{equation*}
	S^{-1}_1  \mathcal{A}  S_1 =
	\left[
	\begin{array}{ccccc}
		\lambda_1 & a_{1,2}      & \ldots                &  a_{1,m} \\
		0              &                     &                       &   \\
		\vdots         &                     & \mathcal{E}_{m-1}   & \\
		0              &                     &                       &  
	\end{array}
	\right],
\end{equation*}
where $S_1 = [v_1 \ e_2 \ \ldots \ e_m]$, $v_1 =[v_{11} \ \ldots \ v_{1m}]^T$ and
$$
v_{1j}(t,\xi) = \dfrac{\langle h^{(1)}(t,\xi) , e_j \rangle_{\mathbb{C}^{m}}}{\langle h^{(1)}(t,\xi) , e_1 \rangle_{\mathbb{C}^{m}}}  \in C^{\infty}(\mathbb{T} \times \mathbb{Z}^n).
$$

Now, we shall see  how to apply  this process to $\mathcal{E}_{m-1}$. For  this,  
consider the projector $\pi_k: \mathbb{R}^m \rightarrow \mathbb{R}^{m-k}$, given by
$\pi_k (x_1, \ldots, x_m) = (x_{k+1}, \ldots, x_m).$

Notice that $S_1^{-1}h_2$ is an eigenvector of $S_1^{-1}AS_1$, with eigenvalue $\lambda_2$. Furthermore,  
$$
h^{(2)} \doteq ( \pi_1 \circ S_1^{-1} ) \cdot h_2 
$$
is an eigenvector of $\mathcal{E}_{m-1}$, smooth on $\mathbb{T} \times \mathbb{Z}^n$, and  corresponding to $\lambda_2$.

By assuming 
$$
\langle h^{(2)}(t, \xi) , e_1 \rangle \neq 0, \ \forall (t, \xi) \in \mathbb{T} \times \mathbb{Z}^n,
$$
for $e_1 = (1, 0, \ldots, 0) \in \mathbb{R}^{m-1}$, we get 
the following decomposition from Proposition  \ref{schur-step} 

\begin{equation*}
	\widetilde{S}^{-1}_2  \mathcal{E}_{m-1}  \widetilde{S}_2 =
	\left[
	\begin{array}{ccccc}
		\lambda_2      & *      & \ldots                &  * \\
		0              &                     &                       &   \\
		\vdots         &                     & \mathcal{E}_{m-2}   & \\
		0              &                     &                       &  
	\end{array}
	\right],
\end{equation*}
where the first row is the first row of $\mathcal{E}_{m-1}$. The matrix $\widetilde{S}_2 = [v_{2} \ e_2 \ \ldots \ e_{m-1} ]$ is computed by setting
$v_2 = [v_{22} \ \ldots \ v_{2m}]^T$ and 
$$
v_{2j}(t,\xi) = \dfrac{\langle h^{(2)}(t,\xi) , e_j \rangle_{\mathbb{C}^{m-1}}}{\langle h^{(2)}(t,\xi) , e_1 \rangle_{\mathbb{C}^{m-1}}} \in C^{\infty}(\mathbb{T} \times \mathbb{Z}^n).
$$

Moreover, by defining 
\begin{equation*}
	S_2 =
	\left[
	\begin{array}{ccccc}
		1      & 0      & \ldots                &  0 \\
		0              &                     &                       &   \\
		\vdots         &                     & \widetilde{S}_2   & \\
		0              &                     &                       &  
	\end{array}
	\right] 
\end{equation*}
we get
\begin{equation*}
	S^{-1}_2S^{-1}_1  \mathcal{A}  S_1S_2 =
	\left[
	\begin{array}{cccccc}
		\lambda_1      & *           & * & \ldots                &  * \\
		0              &  \lambda_2  & * &  \ldots                 &  * \\
		0              &  0          & &                       &   \\
		\vdots         &   \vdots    & & \mathcal{E}_{m-2}     & \\
		0              &   0         &  &                       &  
	\end{array}
	\right]. 
\end{equation*}

The full triangularization is  obtained  by using an inductive argument on the order of the matrices $\mathcal{E}_{j}$. The exhibition is restricted to show only how to construct vectors $h^{(i)}$ and   matrices $S_k$. For $i \in \{2, \ldots, m-1\}$, let us set 
\begin{equation}\label{const_h_i}
	h^{(i)} \doteq \left( \pi_{i-1} \circ S_{i-1}^{-1} \circ S_{i-2}^{-1} \circ \ldots \circ S_{1}^{-1}   \right) \cdot h_{i},
\end{equation}
which are evidently smooth on $\mathbb{T} \times \mathbb{Z}^n$. The matrices $S_k$ are inductively  defined by
\begin{equation}\label{Sk}
	S_k =
	\left[
	\begin{array}{cc}
		I_{k-1} & 0 \\
		0     & \widetilde{S}_{k}
	\end{array}
	\right], 
	\ \textrm{ with } \
	\widetilde{S}_{k} =[v_k \ e_2 \ \ldots \ e_{m-k}],
\end{equation}
where $e_i \in \mathbb{R}^{m-k}$ and 
\begin{equation}\label{vk}
	v_{kj}(t,\xi) = \dfrac{\langle h^{(k)}(t,\xi) , e_j \rangle_{\mathbb{C}^{m-k}}}{\langle h^{(k)}(t,\xi) , e_1 \rangle_{\mathbb{C}^{m-k}}}  \in C^{\infty}(\mathbb{T} \times \mathbb{Z}^n), \ j =k, \ldots, m.
\end{equation}

Finally,   $S(t,\xi)$ is the smooth matrix given by 
\begin{equation}\label{S}
	S = S_1 \circ S_2 \circ \ldots \circ S_{m-1}.
\end{equation}

\qed

\subsection{Strongly triangularizable symbols}

Notice that a matrix symbol $Q(t,\xi)$ that satisfies both condition ($\mathscr{A}$) and the 
hypotheses of Theorem \ref{schur-smooth-trian}  can be smoothly  triangularized. Hence, conditions such that $(\mathscr{B}_2)$ and  $(\mathscr{B}_3)$  are fulfilled, are all that remains for the strong triangularization of  $Q(t,\xi)$. This is the aim of the next Theorem.

\begin{theorem}\label{smooth-t-sym}
	Let  $Q(t,\xi)$ be a symbol satisfying condition  ($\mathscr{A}$) and the hypotheses of Theorem \ref{schur-smooth-trian}. Assume that the eigenvectors  $h_\ell(t, \xi)$, $\ell =1, \ldots, m-1$, satisfy the following properties:
	\begin{enumerate}
		\item [(a)] there are  constants $\kappa,\delta\geq0$ and $C_1,C_2,R>0$   satisfying
		\begin{equation}\label{bound_zero_order}
			C_1|\xi|^{\kappa} \leq \sup_{t \in \mathbb{T}}|h_{\ell}(t,\xi)| \leq C_2  |\xi|^{\delta}, \  |\xi|\geq R;
		\end{equation}

		\item [(b)] given $p \in \mathbb{N}$ and $N>0$ there are positive constants $C$ and $R$ such that 
		\begin{equation}\label{rap_dec_eigne}
			\sup_{t \in \mathbb{T}}	|\partial_t ^{p} h_\ell(t, \xi)| \leq C|\xi|^{-N}, \ \forall |\xi|\geq R.
		\end{equation}	
		
	\end{enumerate}

	Under these conditions,   $Q(t, \xi)$ is strongly triangularizable.

\end{theorem}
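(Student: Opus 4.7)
The plan is to take condition $(\mathscr{B}_1)$ for granted from Theorem \ref{schur-smooth-trian} and verify the remaining two conditions $(\mathscr{B}_2)$ and $(\mathscr{B}_3)$ by tracking how the asymptotic estimates \eqref{bound_zero_order} and \eqref{rap_dec_eigne} propagate through the explicit factorization $S = S_1 S_2 \cdots S_{m-1}$ produced there. The central step will be an induction on $k \in \{1, \ldots, m-1\}$ establishing three structural estimates for each auxiliary vector $h^{(k)}$ defined by \eqref{const_h_i}: a polynomial upper bound $\sup_t |h^{(k)}(t,\xi)| \leq C|\xi|^{\delta_k}$, a polynomial lower bound $\inf_t |\langle h^{(k)}(t,\xi), e_1\rangle| \geq c|\xi|^{-\kappa_k}$ for $|\xi|$ large, and rapid decay of all positive-order $t$-derivatives, $\sup_t |\partial_t^p h^{(k)}(t,\xi)| \leq C_{N,p}|\xi|^{-N}$ for every $p \geq 1$ and every $N > 0$.

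For the base case $h^{(1)} = h_1$, the first and third estimates are immediate from (a) and (b), while the key trick for converting the $\sup_t$ lower bound in \eqref{bound_zero_order} into an $\inf_t$ lower bound is that for any $t, t_0 \in \mathbb{T}$ one has
$$
|h_1(t,\xi) - h_1(t_0,\xi)| \leq 2\pi \sup_{s \in \mathbb{T}}|\partial_s h_1(s,\xi)| \leq C_N |\xi|^{-N},
$$
so $\sup_t |h_1| \geq C_1 |\xi|^{\kappa}$ can be promoted, for $|\xi|$ large, to $\inf_t |h_1| \geq (C_1/2)|\xi|^{\kappa}$; the pointwise nonvanishing hypothesis $\langle h^{(1)}, e_1\rangle \neq 0$ of Theorem \ref{schur-smooth-trian}, combined with the same freezing argument, then yields the lower bound on $|\langle h^{(1)}, e_1\rangle|$. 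For the inductive step one writes $h^{(k+1)}$ as the image of $h_{k+1}$ under the composition $\pi_{k} \circ S_{k}^{-1} \circ \cdots \circ S_1^{-1}$; since at that point each $S_j^{\pm 1}$ is already known to have polynomial bounds with rapidly decaying derivatives, the three structural estimates carry over one step further.

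With the estimates on $h^{(k)}$ available, the verification of $(\mathscr{B}_2)$ reduces to analyzing the entries $v_{kj}(t,\xi) = \langle h^{(k)}, e_j\rangle / \langle h^{(k)}, e_1\rangle$ of the blocks $\widetilde{S}_k$ from \eqref{Sk} and \eqref{vk}: the polynomial upper bound on the numerator divided by the polynomial lower bound on the denominator yields the required polynomial bound on $v_{kj}$, while iterated application of the quotient rule, using rapid decay of $\partial_t^p \langle h^{(k)}, e_j\rangle$ for $p \geq 1$ and the polynomial control on $\langle h^{(k)}, e_1\rangle$, shows that every positive-order $t$-derivative of $v_{kj}$ is rapidly decaying. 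The inverse blocks $\widetilde{S}_k^{-1}$ have the same quotient structure and are handled identically; the Leibniz rule applied to the product $S = S_1 \cdots S_{m-1}$ and its inverse then delivers the polynomial bounds \eqref{poly-decay-S-sec4}, with the stronger property that $\partial_t^\alpha S$ and $\partial_t^\alpha S^{-1}$ are rapidly decaying for every $\alpha \geq 1$.

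Condition $(\mathscr{B}_3)$ then follows essentially for free: since every positive-order $t$-derivative of $S$ is rapidly decaying and $S^{-1}$ has polynomially bounded derivatives of all orders, the Leibniz expansion of $\partial_t^\alpha B = \partial_t^\alpha(S^{-1} \cdot D_t S)$ is a sum of products in which one factor decays rapidly and the other is polynomially bounded, hence rapidly decaying. The main technical obstacle is the inductive step for $h^{(k)}$—specifically, propagating the polynomial lower bound on $|\langle h^{(k)}, e_1\rangle|$ across compositions with $S_j^{-1}$—and this is precisely the point where hypothesis (b) is essential, since rapid decay of derivatives reduces a pointwise-in-$t$ nonvanishing statement to a quantitative one at a single reference point $t_0 \in \mathbb{T}$, modulo errors of order $|\xi|^{-\infty}$ that can be absorbed for $|\xi|$ large.
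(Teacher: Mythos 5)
Your overall route is essentially the paper's: take $(\mathscr{B}_1)$ from Theorem \ref{schur-smooth-trian}, control the entries $v_{kj}=\langle h^{(k)},e_j\rangle/\langle h^{(k)},e_1\rangle$ of the blocks $\widetilde{S}_k$ from \eqref{Sk}--\eqref{vk} (polynomial bound at order zero, rapid decay of all $t$-derivatives of positive order; the paper uses Fa\`a di Bruno where you use the iterated quotient rule, which is immaterial), and then obtain $(\mathscr{B}_3)$ by expanding $\partial_t^{\alpha}B=\partial_t^{\alpha}(S^{-1}D_tS)$ as a Leibniz sum in which one factor is rapidly decaying and the other polynomially bounded. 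Up to your reorganization as an explicit induction on $k$ propagating estimates through \eqref{const_h_i}, these parts match the paper and are sound.

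The genuine gap is your justification of the polynomial lower bound on the denominators $\langle h^{(k)}(t,\xi),e_1\rangle$. You claim it follows from the pointwise nonvanishing hypothesis of Theorem \ref{schur-smooth-trian} together with the ``freezing'' argument, but that inference is false: nonvanishing for each fixed $(t,\xi)$ carries no quantitative information as $|\xi|\to\infty$, and hypothesis (b) only controls the variation in $t$, not the size in $\xi$, so reducing to a reference point $t_0$ leaves you with no bound to propagate. Nothing in (a), (b), or the nonvanishing condition prevents, say, $\langle h_1(t,\xi),e_1\rangle=e^{-|\xi|}$ with the remaining components of size $1$; then (a) holds with $\kappa=\delta=0$ and (b) trivially, yet $v_{12}(t,\xi)$ grows like $e^{|\xi|}$ and the constructed $S$ violates \eqref{poly-decay-S-sec4}. (Your promotion of $\sup_t|h_\ell(t,\xi)|\geq C_1|\xi|^{\kappa}$ to an infimum over $t$ via rapid decay of $\partial_t h_\ell$ is correct, but it bounds the full vector norm, not the pivot component that actually appears in the denominator.) For comparison, the paper does not derive this bound either: it simply asserts $|\omega|^{-1}\leq C_5|\xi|^{-\eta_2}$ for $\omega=h^{(k)}_1$, in effect treating a polynomial lower bound on the pivot components as part of the standing assumptions. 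So you have correctly isolated the crux of the argument, but the derivation you offer for it does not work and would need either an explicit additional hypothesis or a normalization of the eigenvectors for which (a) and (b) can still be verified.
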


\begin{proof}
	The starting point is to prove that $(\mathscr{B}_2)$ is fulfilled. In  view of the construction given  in the proof of Theorem \ref{schur-smooth-trian} (especially equations \eqref{Sk} and \eqref{S}),  it is enough to show that each matrix $\widetilde{S}_k$  satisfies \eqref{poly-decay-S-sec4}. Hence, we should analyze the derivatives of the entries $v_{kj}(t,\xi)$ given by \eqref{vk}.  
	
	To simplify the notations let us  proceed by omitting the index $\ell$ in the eigenvectors. Thus, writing  $h^{(k)}(t,\xi) = (h^{(k)}_{1}(t,\xi), \ldots, h^{(k)}_{m-k}(t,\xi))$ we obtain 
	$$
	v_{kj}(t,\xi) = \dfrac{\langle h^{(k)}(t,\xi) , e_j \rangle_{\mathbb{C}^{m-k}}}{\langle h^{(k)}(t,\xi) , e_1 \rangle_{\mathbb{C}^{m-k}}} =
	\dfrac{h^{(k)}_j(t,\xi)}{h^{(k)}_1(t,\xi)}.
	$$

	In particular, it follows from \eqref{const_h_i} and condition 
	(a)  the existence of constants $C_3$ and $\eta_1$ such that $|v_{kj}(t,\xi)| \leq C_3|\xi|^{\eta_1}$. Moreover, by  \eqref{S} we obtain new constants satisfying
	\begin{equation}\label{order_zero}
		\sup_{t \in \mathbb{T}}\|S(t, \xi)\|_{\mathbb{C}^{m \times m}} \leq C|\xi|^{\eta} \ \textrm{ and } \ 
		\sup_{t \in \mathbb{T}}\|S^{-1}(t, \xi)\|_{\mathbb{C}^{m \times m}}  \leq C|\xi|^{\eta}.
	\end{equation}

	Now, in order to investigate the derivatives of $v_{kj}$ we observe that  by the Leibniz formula we get
	$$
	\partial^{\alpha}_t  v_{kj} = \sum_{\gamma = 0}^{\alpha}\binom{\alpha}{\gamma}\partial_t^{\alpha-\gamma}  h^{(k)}_j \, \partial_t^{\gamma}  \left( \dfrac{1}{ h^{(k)}_1} \right), \ \alpha \in \mathbb{N}.
	$$
	
	Once more,  to simplify notation, write  $\omega = h^{(k)}_1$. It follows from the  Fa\`a di Bruno's formula that
	$$
	\partial_t^{\gamma}  \left( \omega^{-1} \right) =\sum_{\Delta(\gamma)}
	\left[
	\dfrac{\gamma !}{\beta!} (-1)^{|\beta|} |\beta|! \left( \omega^{-1} \right)^{|\beta| +1} \prod_{s=1}^{\gamma}\left(\dfrac{\partial_t^{s}\omega}{s !} \right)^{\beta_s}\right],
	$$
	where $\Delta(\gamma) = \{ \beta \in \mathbb{Z}^{\gamma}_+; \ \sum_{s=1}^{\gamma}s\,\beta_s = \gamma \}$.
	
	Consider $N>0$. By equations \eqref{const_h_i}, \eqref{bound_zero_order} and \eqref{rap_dec_eigne} we get 
	$$
	\left|\dfrac{\partial_t^{s}\omega}{s !} \right|^{\beta_s} \leq C_4 |\xi|^{-N\beta_s} (s!)^{-\beta_s}  \leq C_4 (s!)^{-\beta_s},
	$$
	for $|\xi|$ large enough. Also, there exist $\eta_{2}\geq 0$ such that $|\omega |^{-1} \leq C_5|\xi|^{-\eta_2}$. Hence, 
	$$
	|\omega |^{-(|\beta| +1)}\leq C_5 |\xi|^{-\eta_2 (|\beta| +1)} \leq C_5, \ \textrm{ as } \ |\xi| \to \infty,
	$$
	and we may obtain $R_2>0$ such that
	\begin{equation}\label{bound-proof}
		|\partial_t^{\gamma}  \left( \omega^{-1} \right)|  \leq  \sum_{\Delta(\gamma)}
		\left[
		\dfrac{\gamma !}{\beta!} \, \, |\beta|! \, \,    |\omega|^{-(|\beta| +1)}
		\prod_{s=1}^{\gamma}\left|\dfrac{\partial_t^{s}\omega}{s !} \right|^{\beta_s}\right]  
		\leq \sum_{\Delta(\gamma)}
		\left[
		\dfrac{\gamma !}{\beta!} \, \, |\beta|! \, C_5
		\prod_{s=1}^{\gamma}C_4(s!)^{-\beta_s}\right]   
		= C_6,	
	\end{equation}
	for all $|\xi| \geq R_2$. 
	
	Since $|\partial_t^{\alpha-\gamma}  h^{(k)}_j(t,\xi)| \leq C_7|\xi|^{-N}$, it follows from \eqref{bound-proof} that
	\begin{equation}\label{any_order}
		|\partial^{\alpha}_t  v_{kj}(t,\xi)|  \leq \sum_{\gamma = 0}^{\alpha}\binom{\alpha}{\gamma} |\partial_t^{\alpha-\gamma}  h^{(k)}_j(t,\xi)| \left|\partial_t^{\gamma}  \left( \omega^{-1} \right)\right| \\
		\leq C |\xi|^{-N}, \ \textrm{ as } \ |\xi| \to \infty.
	\end{equation}
	Hence,  $(\mathscr{B}_2)$ is a consequence of \eqref{S}, \eqref{order_zero} and \eqref{any_order}.
	
	Finally, note that 
	$$
	D_t S(t,\xi) = 
	\left[
	\begin{array}{ccccc}
		0 & 0     & \ldots   &             &  0 \\
		D_tv_{2,1}&  0     &   &                   & \vdots   \\
		\vdots     &    \ddots   &  &           &    \\
		&     & &          &  	\\
		D_tv_{m,1}&   \ldots      &      &              D_tv_{m,m-1}   & 0 
	\end{array}
	\right].
	$$
	Then, given	$\beta \in \mathbb{Z}_+$ and $N>0$ we obtain $C,R>0$ such that
	\begin{equation}\label{estimate_D}
		\sup_{t \in \mathbb{T}}\|\partial^{\beta}_tD_t S(t,\xi) \|_{\mathbb{C}^{m \times m}} 
		\leq C |\xi|^{-N}, \ |\xi|\geq R.
	\end{equation}

	Therefore, 	$(\mathscr{B}_3)$ is a consequence of $(\mathscr{B}_2)$,  \eqref{estimate_D} and
	$$
	\|\partial^{\alpha}_tB(t,\xi)\|_{\mathbb{C}^{m \times m}}  \leq 
	\sum_{\beta = 0}^{\alpha}  \binom{\alpha}{\beta}\|\partial^{\alpha-\beta}_tS^{-1}(t,\xi) \|_{\mathbb{C}^{m \times m}} \|\partial^{\beta}_tD_t S(t,\xi) \|_{\mathbb{C}^{m \times m}}.	
	$$

\end{proof}

\begin{example}
	Consider
	$$
	Q (t,D_x) = \left [
	\begin{array}{cc}
		0  &  a^2(t) D_x^2\\[2mm]
		P(D_x)  & 0
	\end{array} \right], \ (t,x) \in  \mathbb{T}^2,
	$$
	where $a(t)\not \neq 0$ is a smooth real-valued function and $P(D_x)$ is a pseudo-differential operator on $\mathbb{T}$ with symbol $p(\xi)$ satisfying the following condition: for all $N>0$ there exists $C,R>0$ such that
	\begin{equation}\label{rep_decay_symbol}
		0\leq p(\xi) \leq C |\xi|^{-N}, \ \forall |\xi| \geq R.
	\end{equation}
	
	The eigenvalues are $\pm a(t)\xi\sqrt{p(\xi)}$ with corresponding eigenvectors
	$$
	h(t,\xi) =
	\left\{
	\begin{array}{l}
		(1, \pm 1), \ \xi =0, \\
		\left( 1, \pm \dfrac{\sqrt{p(\xi)}}{\xi a(t)}\right), \ \xi \neq 0.
	\end{array}
	\right.
	$$

	It follows from \eqref{rep_decay_symbol} that conditions (a) and (b) in Theorem \ref{smooth-t-sym} are fulfilled. In particular, 
	$$
	\left [
	\begin{array}{cc}
		1  &  0\\[2mm]
		-\dfrac{\sqrt{p(\xi)}}{\xi a(t)} & 1
	\end{array} \right]
	\, 
	Q (t,\xi)
	\,  \left [
	\begin{array}{cc}
		1  &  0\\[2mm]
		\dfrac{\sqrt{p(\xi)}}{\xi a(t)}  & 1
	\end{array} \right]
	=
	\left [
	\begin{array}{cc}
		a(t)\xi\sqrt{p(\xi)}  &   a^2(t) \xi^2\\[2mm]
		0  & - a(t)\xi\sqrt{p(\xi)} \xi
	\end{array} \right]
	$$
	and 
	$$
	D_tB(t,\xi) =  \dfrac{a'(t) \sqrt{p(\xi)}}{a^2(t) \xi}	 
	\left [
	\begin{array}{cc}
		0 &   0\\[2mm]
		i & 0 
	\end{array} \right], \ \xi \neq 0.
	$$

\end{example}

\bibliography{deAvilaSilva_Fernando}

\end{document}